\documentclass[lettersize,onecolumn]{IEEEtran}
\usepackage{amsmath,amsfonts}
\usepackage{algorithmic}
\usepackage{algorithm}
\usepackage{array}
\usepackage[caption=false,font=normalsize,labelfont=sf,textfont=sf]{subfig}
\usepackage{textcomp}
\usepackage{stfloats}
\usepackage{url}
\usepackage{verbatim}
\usepackage{graphicx}
\usepackage{cite}
\usepackage{amssymb}
\usepackage{breqn}
 \usepackage{amsthm}
 \usepackage{amsmath}
 \usepackage{hyperref}
 \hypersetup{hidelinks}
 \usepackage{verbatim}
 \usepackage{tikz}
\usepackage{tikz-cd}
\begin{document}

\title{Explicit determination of a class of permutation rational functions in any characteristic}

\author{Yi Li, Deng Tang
\thanks{Yi Li, and Deng Tang are with the School of Computer Science, Shanghai Jiao Tong University, Shanghai 200240, China.(e-mail:
yi.li@sjtu.edu.cn; dengtang@sjtu.edu.cn).}
}

\markboth{Journal of \LaTeX\ Class Files,~Vol.~1, No.~2, December~2023}%
{Shell \MakeLowercase{\textit{et al.}}: A Sample Article Using IEEEtran.cls for IEEE Journals}

\IEEEpubid{0000--0000~\copyright~2023 IEEE}

\maketitle

\begin{abstract}
In this paper, we make use of the classification results of low-degree permutation rational functions together with their geometric properties to investigate rational functions that induce permutations on the multiplicative subgroup $\mu_{q+1}$, where $q$ is a prime power. By carefully analyzing the structural conditions under which such rational functions permute $\mu_{q+1}$, we obtain an explicit description of a broad class of permutation rational functions of small degree. As a direct application of these findings, we explicitly determine many permutation quadrinomials over $\mathbb{F}_{q^2}$ that are induced by degree-3 rational functions permuting $\mu_{q+1}$. Our approach not only unifies and extends several existing results in the literature but also provides a concrete geometric perspective for characterizing permutation polynomials over $\mathbb{F}_{q^2}$.
\end{abstract}

\begin{IEEEkeywords}
finite field, permutation rational functions, permutation quadrinomials
\end{IEEEkeywords}
\newtheorem{exa}{Example}
\newtheorem{rmk}{Remark}
\newtheorem{defn}{Definition}
\newtheorem{pro}{Proposition}
\newtheorem{thm}{Theorem}
\newtheorem{lem}{Lemma}
\newtheorem{case}{Case}	
\newtheorem{cor}{Corollary}
\newtheorem{subcase}{Case}
\numberwithin{subcase}{case}
\section{Introduction}
\IEEEPARstart{L}{et} $q$ be a prime power and $\mathbb{F}_{q}$ be the finite field with $q$ elements. A polynomial $f\in \mathbb{F}_{q}[x]$ is called a permutation polynomial over $\mathbb{F}_q$ if the associated polynomial function $f:c\rightarrow f(c)$ from $\mathbb{F}_q$ into $\mathbb{F}_q$ is a permutation. In recent years, the study of permutation polynomials has expanded significantly because of their applications in coding theory \cite{OTCCFM2013,PPACC2007}, cryptography \cite{RSA1978,PKE1998}, and combinatorial designs \cite{AFSHDS2006}, and we refer the reader to \cite{PPOFFASORA2015,HFF2013,PPOFFAIA2019, wangDCC2024} and references therein for more details of the recent advances and contributions of the area. 

Numerous works have focused on constructing families of sparse permutation polynomials, while comparatively less attention has been given to the complete classification of permutation polynomials within specific classes. In 2015, Hou \cite{HOU201516} determined all permutation trinomials of $\mathbb{F}_{q^2}$ of the form $ax+bx^q+x^{2q-1}\in \mathbb{F}_{q^2}[x]$. Since then, a large portion of this research has focused on permutation polynomials over $\mathbb{F}_{q^2}$ of the form $f(x)=ax^{2q-1}+bx^{q}+cx+dx^{q^2-q+1}$. However, in the majority of existing results, the coefficients are constrained to belong to $\mathbb{F}_q$. In \cite{ding2025class}, Ding and Zieve determined all permutation quadrinomials of the form $f(x)=x+x^q+x^{2q-1}+dx^{q^2-q+1}$ over $\mathbb{F}_{q^2}$. Very recently, Ding et al. \cite{DXZCP} and  C.H. Chan et al. \cite{CHAN2026102734} completely determined the  permutation quadrinomials over $\mathbb{F}_{q^2}$ of the form $ax^{3q}+bx^{2q+1}+cx^{q+2}+dx^3$ up to linear equivalence. 

In this paper, by using the classification results of low-degree permutation rational functions, we explicitly determine all degree-$3$ rational functions of the form $\frac{d^qx^3+c^qx^2+b^qx+a^q}{ax^3+bx^2+cx+d}$ over $\mu_{q+1}$, where $a,b,c,d\in \mathbb{F}_{q^2}$, which induce a permutation on $\mu_{q+1}$. Our main theorems are stated as follows:

Let $q=p^m$, where $p$ is a prime and $m$ is a positive integer. For $a,b,c,d\in \mathbb{F}_{q^2}$, define 
\[D(x)=ax^{3}+bx^2+cx+dx, N(x)=d^qx^3+c^qx^2+b^qx+a^q,\]
\[g(x)=\frac{d^qx^3+c^qx^2+b^qx+a^q}{ax^3+bx^2+cx+d}.\]

When $g(x)$ has degree 1, we have

\begin{thm}\label{thm6}
     Let $q=p^m$, where $p$ is prime and $m$ is a positive integer. For $a,b,c,d\in \mathbb{F}_{q^2}$, the rational function  $g(x)=\frac{d^qx^3+c^qx^2+b^qx+a^q}{ax^3+bx^2+cx+d}$ permutes $\mu_{q+1}$ if and only if one of the following holds:

        (1) $d=0$, $c\neq 0$, $bc^q=ab^q$ and $c^{q+1}=a^{q+1}$;

    (2) $d=0$, $a=0$, and $c^{q+1}\neq b^{q+1}$;

    (3) $d\neq 0$, $d^qb\neq ac^q$, $d^qc\neq ab^q$, and $(\frac{d^qc-ab^q}{d^qb-ac^q})^q(\frac{d^{q+1}-a^{q+1}}{d^qb-ac^q})=\frac{d^qc-ab^q}{d^qb-ac^q}$;

    (4) $d\neq 0$, $d^qb\neq ac^q$, $d^qc=ab^q$, and $\frac{d^{q+1}-a^{q+1}}{d^qb-ac^q}=\frac{b^q}{c^q}\in \mu_{q+1}$.
    
\end{thm}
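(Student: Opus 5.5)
The plan rests on the identity that holds on $\mu_{q+1}$: for $x\in\mu_{q+1}$ we have $x^q=x^{-1}$, so
\[N(x)=d^qx^3+c^qx^2+b^qx+a^q=x^3\bigl(a^q x^{-3}+b^qx^{-2}+c^qx^{-1}+d^q\bigr)=x^3D(x)^q,\]
where $D(x)$ is read as the denominator $ax^3+bx^2+cx+d$ of $g$. Hence $g(x)=x^3D(x)^{q-1}$ whenever $D(x)\neq 0$, so $g$ automatically maps $\mu_{q+1}\setminus\{D=0\}$ into $\mu_{q+1}$. Also, if $\alpha$ is a root of $D$, then $1/\alpha^q$ is a root of $N$. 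In the situation of the theorem, $g$ reduces to a M\"obius transformation $\frac{ex+f}{hx+k}$, which is a bijection of $\mathbb{P}^1(\overline{\mathbb{F}}_q)$. Therefore $g$ permutes $\mu_{q+1}$ exactly when the reduced map sends $\mu_{q+1}$ into itself, i.e.\ when no point of $\mu_{q+1}$ is sent to $\infty$ or comes from $\infty$ in a way incompatible with finiteness. Equivalently, the reduced map has no pole on $\mu_{q+1}$ and preserves $\mu_{q+1}$. So the proof reduces to two tasks: (i) find exactly when $\gcd(N,D)$ has degree $2$, and (ii) write down the reduced degree-one map and check that it is a genuine bijection of $\mu_{q+1}$.

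\textbf{Case $d=0$.} Here $D=x(ax^2+bx+c)$ and $N=c^qx^2+b^qx+a^q$.
\begin{itemize}
\item If $a=0$, then $D=x(bx+c)$ and $N=x(c^qx+b^q)$, so $g=\frac{c^qx+b^q}{bx+c}$. This has degree one iff $c^{q+1}\neq b^{q+1}$. A map of this shape sends $\mu_{q+1}$ to $\mu_{q+1}$, and its pole $-c/b$ lies on $\mu_{q+1}$ only when $b^{q+1}=c^{q+1}$. This gives (2).
\item If $a\neq0$, the common factor must be the quadratic $ax^2+bx+c$ itself, up to a scalar, since $x\nmid N$ when $a\neq0$. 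This requires $N$ to be proportional to $ax^2+bx+c$, and then $g$ reduces to $\lambda/x$, a permutation of $\mu_{q+1}$.
\item Comparing coefficients $c^q:b^q:a^q=a:b:c$ should yield exactly $bc^q=ab^q$ and $c^{q+1}=a^{q+1}$, with $c\neq0$. The cases with $a\neq0$ and $c=0$ will have to be excluded by a direct degree count. This gives (1).
\end{itemize}

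\textbf{Case $d\neq0$.} Here both $N$ and $D$ have nonzero constant term. I would run one step of the Euclidean algorithm:
\[d^qD-aN=(d^qb-ac^q)x^2+(d^qc-ab^q)x+(d^{q+1}-a^{q+1}).\]
For $g$ to have degree one, this quadratic (or a lower-degree remainder) must be the common factor, or must vanish identically. The identically-vanishing case gives degree $0$ and is excluded. I then split according to whether $d^qb-ac^q$ and $d^qc-ab^q$ vanish.

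When $d^qb\neq ac^q$, set
\[r=\frac{d^qc-ab^q}{d^qb-ac^q},\qquad s=\frac{d^{q+1}-a^{q+1}}{d^qb-ac^q},\]
so the candidate common factor is $Q=x^2+rx+s$. The requirement that $Q$ also divide $N$ is equivalent, by the root correspondence $\alpha\mapsto1/\alpha^q$, to $Q$ being self-conjugate-reciprocal up to scalar, i.e.\ $x^2Q(1/x^q)^{q}\propto Q$. Writing this out gives $r^q s=r$ and $s^{q+1}=1$.
\begin{itemize}
\item When $r\neq0$, the relation $s^{q+1}=1$ should follow from $r^qs=r$ after taking norms. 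This gives (3).
\item When $r=0$, that is $d^qc=ab^q$, one is left with $s\in\mu_{q+1}$. One then needs to identify $s$ with $b^q/c^q$ using the relation $d^qc=ab^q$, which gives (4).
\end{itemize}
After cancelling $Q$, the reduced map is $\frac{d^qx+e}{ax+f}$ for suitable $e,f$. I would verify that its pole is off $\mu_{q+1}$, which amounts to checking that its determinant is nonzero. Since we know the map lands in $\mu_{q+1}$ wherever it is defined, a bijection of $\mathbb{P}^1$ with no pole on $\mu_{q+1}$ permutes $\mu_{q+1}$.

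\textbf{Main obstacle.} I expect the hardest part to be the bookkeeping in the $d\neq0$ case. First, I must show the Euclidean remainder is genuinely the gcd and that no other degree-two common factor arises. This is especially delicate when $d^qb=ac^q$, where the remainder is linear; I plan to show that this subcase forces degree $\le$ something other than one, or reduces to (1) or (2). Second, I must show that the reduced M\"obius map really has no pole on $\mu_{q+1}$. My first attempt would be to express $Q$'s roots as a pair $\{\alpha,1/\alpha^q\}$ and compute the reduced map's determinant in terms of $a,d$ and $\alpha$.
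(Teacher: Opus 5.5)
Your route to the gcd is the paper's: one Euclidean step $d^qD-aN=(d^qb-ac^q)Q$, the SCR criterion $r^qs=r$, $s^{q+1}=1$ of Lemma~\ref{lem10}, and the norm argument that makes $s^{q+1}=1$ redundant when $r\neq 0$. The step that fails is the last one in case (4): you cannot ``identify $s$ with $b^q/c^q$''. If $Q=x^2+s$ divides $N$, reducing $N$ modulo $x^2+s$ gives $b^q=d^qs$ and $a^q=c^qs$, so $s=b^q/d^q$. The $c^q$ in the statement is a misprint; Lemma~\ref{lem9}(ii), Theorem~\ref{thm1}(2)(iv) and the paper's proof in Section~4.1 all use $b^q/d^q$. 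As written, (4) is wrong:
\begin{itemize}
\item When $a=0$, the hypothesis $d^qc=ab^q$ forces $c=0$. For example $a=c=0$, $b=d=1$ gives $g(x)=x$, and $b^q/c^q$ is undefined.
\item Taking $b=d=1$ and $c=a\neq 0$ with $a^{q+1}\neq 1$ gives the permutation $g(x)=(x+a^q)/(ax+1)$, yet $s=1\neq a^{-q}=b^q/c^q$.
\end{itemize}
So prove (4) in the form $s=b^q/d^q\in\mu_{q+1}$, which under $r=0$ is equivalent to just $s\in\mu_{q+1}$. Note also that ``only if'' holds only under the tacit hypothesis $\deg g=1$, which you and the paper both use. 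Without it, $a=b=c=0$, $d=1$ gives $g=x^3$, a permutation when $3\nmid q+1$, satisfying none of (1)--(4). The obstacles you anticipate are not real. $H$ divides the nonzero remainder, so for $d^qb=ac^q$ it has degree at most $1$ and $\deg g\ge 2$; this subcase cannot ``reduce to (1) or (2)'' since $d\neq 0$. For $d^qb\neq ac^q$, a degree-$2$ $H$ is automatically $Q$.

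For the permutation part you genuinely differ from the paper, which writes each reduced map explicitly and checks Lemma~\ref{lem3}. Your argument (``$g$ maps $\mu_{q+1}$ into itself, and a M\"obius map is injective'') is shorter and shows at once that every degree-one $g$ of this shape permutes $\mu_{q+1}$. As stated, though, it has a hole. The identity $N(x)=x^3D(x)^q$ on $\mu_{q+1}$ only controls points with $D(\zeta)\neq 0$, and roots of $Q$ can lie on $\mu_{q+1}$: for $Q=x^2+1$ above this happens whenever $q$ is even or $q\equiv 3\pmod 4$. At such points the reduced map $\phi$ is defined, but your identity reads $0=0$.

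Use instead the identity $g(x)\,g^{(q)}(1/x)=1$ in $\mathbb{F}_{q^2}(x)$, which $\phi$ inherits. For $\zeta\in\mu_{q+1}$ it gives $\phi(\zeta)^q=\phi^{(q)}(\zeta^{-1})=1/\phi(\zeta)$ in $\mathbb{P}^1$. This excludes $0$ and $\infty$ and forces $\phi(\zeta)^{q+1}=1$, so no pole or determinant check is needed.

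Likewise, prove ``$Q$ SCR $\Rightarrow Q\mid N$'' with polynomials rather than roots, since a double root escapes a root-by-root argument (e.g.\ $Q=(x+1)^2$ for even $q$). Apply $P\mapsto x^3P^{(q)}(1/x)$ to $d^qD-aN=(d^qb-ac^q)Q$. This gives $dN-a^qD=(db^q-a^qc)\,x\hat Q=(db^q-a^qc)s^q\,xQ$. Since $s\neq 0$ means $d^{q+1}\neq a^{q+1}$, this $2\times 2$ system is invertible and makes both $D$ and $N$ multiples of $Q$. That is a cleaner substitute for the coefficient verification in the proof of Lemma~\ref{lem9}.
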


When $g(x)$ has degree $2$, we obtain
\begin{thm}\label{thm7}
     Let $q=2^m$, where $m$ is a positive integer. For $a,b,c,d\in \mathbb{F}_{q^2}$, the rational function  $g(x)=\frac{d^qx^3+c^qx^2+b^qx+a^q}{ax^3+bx^2+cx+d}$ permutes $\mu_{q+1}$ if and only if 

     (1) $c=d=0$, $a,b\neq 0$ and $\frac{a}{b}\in \mu_{q+1}$;

     (2) $c,d\neq 0$, $a=b=0$, $\frac{d}{c}\in \mu_{q+1}$;

     (3) $a,b,c,d\neq 0$, $\frac{c^q}{d^q}=\frac{b}{a}=\frac{d}{c}=\frac{a^q}{b^q}=\frac{d^{q+1}-a^{q+1}}{d^qc-ab^q}\in \mu_{q+1}$.
\end{thm}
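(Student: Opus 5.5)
Theorem~\ref{thm7} concerns the case in which $g$ has degree $2$ after cancelling common factors, and the plan reduces it to the classification of degree-$2$ permutation rational functions of $\mathbb{P}^1(\mathbb{F}_{q^2})$ that preserve $\mu_{q+1}$. The starting point is the symmetry $N(x)=x^3D(1/x^q)^q$, so for $x\in\mu_{q+1}$ we have $g(x)=x^3\overline{D(x)}/D(x)$, where $\overline{y}=y^q$. Hence $g$ maps $\mu_{q+1}$ to itself whenever $D$ has no roots on $\mu_{q+1}$. Moreover, if $r$ is a common root of $N$ and $D$, then $1/r^q$ is also a common root.

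\textbf{Step 1: the shape of the cancellation.} For $g$ to have degree $2$, $\gcd(N,D)$ must have degree exactly $1$ (or the degree drop comes from a root at $0$ or $\infty$). So there is a single common root $r$, and by the symmetry above it satisfies $r=1/r^q$; that is, either $r\in\mu_{q+1}$, or $r\in\{0,\infty\}$ with the roles of $0$ and $\infty$ swapped. I would split into these cases.
\begin{itemize}
\item The case $r=0$, equivalently $d=0$ (which forces $a=0$ by symmetry), gives a cancelled factor $x$.
\item The case $r=\infty$ is treated in the same way as $r=0$.
\item The case $r\in\mu_{q+1}$ with $a,b,c,d$ generic is handled by writing $D=(x-r)D_1$ and $N=(x-r)N_1$ with $N_1=\lambda x^2\overline{D_1}(1/x)$ for some $\lambda\in\mu_{q+1}$.
\end{itemize}
In each case the reduced function is $h(x)=\lambda x^2\overline{D_1(x)}/D_1(x)$ of degree $2$, and I need a criterion for $h$ to permute $\mu_{q+1}$. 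One point must be checked at the cancelled root $r\in\mu_{q+1}$: the value $g(r)$ must be taken as the value of the reduced function $h$ there.

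\textbf{Step 2: classify the degree-$2$ functions.} I would use the classification of degree-$2$ permutation rational functions, equivalently the geometric fact that $h$ permutes $\mu_{q+1}$ if and only if, after Möbius maps $\mu_{q+1}\to\mathbb{P}^1(\mathbb{F}_q)$ on both sides, $h$ is equivalent to a degree-$2$ permutation of $\mathbb{P}^1(\mathbb{F}_q)$. Degree-$2$ permutation rational functions exist only in characteristic $2$, which explains why the theorem assumes $q=2^m$. Up to Möbius equivalence they are additive, $x^2$ composed with Möbius maps, so there is a unique ramification point, which is totally wildly ramified. Pulled back to $\mu_{q+1}$, this gives the condition that $h$ has exactly one critical point and that it lies on $\mu_{q+1}$ (or in $\{0,\infty\}$ paired appropriately). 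Concretely, $h=\lambda x^2\overline{D_1}/D_1$ should be Möbius-equivalent to $x^2$. In characteristic $2$ a degree-$2$ map is inseparable precisely when it is a Frobenius twist, so $h$ should be $\phi\circ x^2\circ\psi$ with $\phi$ and $\psi$ preserving $\mu_{q+1}$.

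\textbf{Step 3: translate into the coefficient conditions.} In case (1), $c=d=0$ gives $g=(b^qx+a^q)/(ax^3+bx^2)=(b^qx+a^q)/(x^2(ax+b))$. For this to have degree $2$, the numerator and denominator must share the factor $ax+b$, which forces $a^{q+1}=b^{q+1}$, i.e.\ $a/b\in\mu_{q+1}$; the result is then $g=\mu/x^2$, which permutes $\mu_{q+1}$ since $\gcd(2,q+1)=1$. Case (2) is symmetric. In the generic case I would impose that the reduced function equal $\alpha\,((x-s)/(1-\overline{s}x))^2$ up to Möbius maps and compare coefficients, which should yield the chain of equalities in (3). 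Necessity would come from showing that every other cancellation pattern yields a genuine separable degree-$2$ map, which cannot permute $\mu_{q+1}$ for $q$ large because its fibres over $\mu_{q+1}$ have two points. Small $q$ would be checked separately.

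I expect the main obstacle to be the coefficient comparison in the generic case (3). Showing that having a common root on $\mu_{q+1}$, together with the reduced quadratic being a twisted square, is equivalent exactly to $\frac{c^q}{d^q}=\frac{b}{a}=\frac{d}{c}=\frac{a^q}{b^q}=\frac{d^{q+1}-a^{q+1}}{d^qc-ab^q}$ requires careful elimination of $r$. I would attempt this by computing a resultant-style condition for the common root.
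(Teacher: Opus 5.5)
Your outline matches the paper's route: transfer to $\mathbb{P}^1(\mathbb{F}_q)$ via Lemma~\ref{lem6}, use that a degree-$2$ permutation must be equivalent to $x^2$ (so in characteristic $2$, $g=\ell(x^2)$ for a M\"obius $\ell$), then compare coefficients. However, the decisive step is missing. You never derive the chain in (3), and your necessity argument (``every other cancellation pattern yields a separable map'') depends on that same missing computation.

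No resultant or elimination of $r$ is needed. Write $N=(x+r)N_1$ and $D=(x+r)D_1$, where $r\in\mu_{q+1}$ is the unique common root. Since $(Ax^2+B)/(Cx^2+D')$ is already in lowest terms, after rescaling $N_1=Ax^2+B$ and $D_1=Cx^2+D'$. Comparing coefficients in $D=(x+r)(Cx^2+D')$ and $N=(x+r)(Ax^2+B)$ gives $C=a$, $D'=c$, $A=d^q$, $B=b^q$, together with $b=ra$, $d=rc$, $c^q=rd^q$ and $a^q=rb^q$.
\begin{itemize}
\item From these relations, $d^{q+1}-a^{q+1}=r(d^qc-ab^q)$.
\item $d^qc-ab^q\neq 0$, because it is the determinant of $\ell=(d^qx+b^q)/(ax+c)$. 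Hence $r$ equals the last ratio in (3).
\item If $c=0$, then $d=0$, $b=ra\neq 0$ and $a/b\in\mu_{q+1}$, which is (1).
\item If $a=0$, then $b=0$, $d=rc\neq 0$ and $d/c=r\in\mu_{q+1}$, which is (2).
\item Otherwise all four coefficients are nonzero, and the relations are exactly the chain (3).
\end{itemize}
Conversely, under (1)--(3) the same relations give $g=\ell(x^2)$ with $\ell$ permuting $\mu_{q+1}$ by Lemma~\ref{lem3}, and $x^2$ permutes $\mu_{q+1}$ because $q+1$ is odd. The paper does this comparison separately in each of the four degree-$2$ cases of Theorem~\ref{thm1}. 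Once this computation is supplied, your uniform treatment through $r\in\mu_{q+1}$ is cleaner than the paper's.

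Two claims in your Steps 1--2 are also wrong and should be dropped, not relied on.
\begin{itemize}
\item In characteristic $2$, $x^2$ does not have ``a unique, totally wildly ramified ramification point''. It is purely inseparable: its derivative vanishes identically and $e=2$ at every point. A degree-$2$ map with exactly one ramification point is separable in characteristic $2$ (e.g.\ $x^2+x$), and such maps never permute. So the ``exactly one critical point on $\mu_{q+1}$'' criterion selects precisely the wrong functions. The correct criterion is the one in your next sentence, $g\in\mathbb{F}_{q^2}(x^2)$.
\item The unique common root can never be $0$ or $\infty$. The involution $r\mapsto r^{-q}$ swaps them, so $a=d=0$ makes both of them common roots and forces $\deg g\le 1$. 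In cases (1) and (2) of the theorem, the common root is $b/a$, respectively $d/c$, in $\mu_{q+1}$; it is the double root of $D$ that sits at $0$, respectively $\infty$. Your own Step~3 computation for case (1), which cancels $ax+b$, already reflects this and contradicts the Step~1 bullet.
\end{itemize}
Finally, no small-$q$ check is needed. The degree-$2$ classification has no exceptions, because a separable degree-$2$ map has fibres of size $0$ or $2$ over every unramified rational point.
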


When the characteristic of $\mathbb{F}_q$ is not $3$ and the degree of $g(x)$ is $3$, we have
\begin{thm}\label{thm8}
     Let $q=p^m$, where $p$ is prime and $m$ is a positive integer. For $a,b,c,d\in \mathbb{F}_{q^2}$, the rational function $g(x)=\frac{N(x)}{D(x)}=\frac{d^qx^3+c^qx^2+b^qx+a^q}{ax^3+bx^2+cx+d}\in \mathbb{F}_{q^2}(x)$. Assume that $\text{deg}(g)=3$. When the characteristic of $\mathbb{F}_q$ is $2$, the function $g(x)$ permutes $\mu_{q+1}$ if and only if Proposition \ref{pro1} holds and either
    
    (1) $\text{gcd}(3,q-1)=1$ and $(ad+bc)x^2+(a^{q+1}+b^{q+1}+c^{q+1}+d^{q+1})x+(a^qd^q+b^qc^q)=0$ has a root in $\mu_{q+1}$, or 
    
    (2) $\text{gcd}(3,q+1)=1$ and $(ad+bc)x^2+(a^{q+1}+b^{q+1}+c^{q+1}+d^{q+1})x+(a^qd^q+b^qc^q)=0$ has no roots in $\mu_{q+1}$. 
    
    When the characteristic of $\mathbb{F}_q$ is neither $2$ or $3$, the function $g(x)$ permutes $\mu_{q+1}$ if and only if Proposition \ref{pro1} holds and either 
    
    (1) $\text{gcd}(3,q-1)=1$ and  $(3ac-b^2)x^2+(2bc^q-3d^qc-3ab^q)x+3d^qb^q-c^{2q}=0$ has a root in $\mu_{q+1}$, or 
    
    (2) $\text{gcd}(3,q+1)=1$ and $ (3ac-b^2)x^2+(2bc^q-3d^qc-3ab^q)x+3d^qb^q-c^{2q}=0$ has no roots in $\mu_{q+1}$.
\end{thm}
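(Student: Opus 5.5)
The plan is to transfer the problem from $\mu_{q+1}$ to $\mathbb{P}^1(\mathbb{F}_q)$ by a degree-one change of variables, apply the known classification of degree-$3$ permutation rational functions over $\mathbb{F}_q$, and then read the resulting geometric condition back on $\mu_{q+1}$. I take Proposition \ref{pro1} to be the condition ensuring $D$ has no roots in $\mu_{q+1}$, so that $g(\mu_{q+1})\subseteq\mu_{q+1}$. Indeed, for $x\in\mu_{q+1}$ we have $N(x)=x^3\overline{D(x)}$, hence $g(x)^{q+1}=1$. Under this proposition, the question is whether the degree-$3$ map $g\colon\mu_{q+1}\to\mu_{q+1}$ is bijective.

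\textbf{Step 1 (moving to $\mathbb{F}_q$).} Fix $\delta\in\mathbb{F}_{q^2}\setminus\mathbb{F}_q$ and put $\psi(x)=\frac{x-\delta^q}{x-\delta}$, which is a bijection $\mathbb{P}^1(\mathbb{F}_q)\to\mu_{q+1}$. Set $h=\psi^{-1}\circ g\circ\psi$. Since $g$ commutes with the involution $x\mapsto x^{-q}$ that fixes $\mu_{q+1}$, $h$ is fixed by Frobenius, so $h\in\mathbb{F}_q(x)$ has degree $3$. Moreover, $g$ permutes $\mu_{q+1}$ exactly when $h$ permutes $\mathbb{P}^1(\mathbb{F}_q)$.

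\textbf{Step 2 (the classification).} I will use the classification of degree-$3$ permutation rational functions (Ferraguti--Micheli, Ding--Zieve). When $p\neq 3$, $h$ is separable and tamely ramified. Up to composition with degree-one functions over $\mathbb{F}_q$ on both sides, $h$ permutes $\mathbb{P}^1(\mathbb{F}_q)$ exactly in two situations. The first is $h\sim x^3$ with $\gcd(3,q-1)=1$; here $h$ has two totally ramified points, both in $\mathbb{P}^1(\mathbb{F}_q)$. The second is $h$ a Rédei function, i.e.\ conjugate over $\mathbb{F}_{q^2}$ to $x^3$, with the two totally ramified points conjugate in $\mathbb{P}^1(\mathbb{F}_{q^2})\setminus\mathbb{P}^1(\mathbb{F}_q)$ and $\gcd(3,q+1)=1$. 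Any other ramification type fails: degree-$3$ functions with four simple critical points, for example, are not permutations. This I would justify by citing the classification, or by the exceptionality/monodromy argument (a geometric monodromy group that is not cyclic forces a point with fibre of size $1$ or $2$), with small $q$ covered by the explicit lists.

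\textbf{Step 3 (critical points of $g$).} I will compute the critical points of $g$ from the Wronskian $N'D-ND'$. In characteristic $\neq 2,3$, after removing the factors corresponding to $x=0,\infty$ and the symmetry, this Wronskian should reduce to a constant times the quadratic $(3ac-b^2)x^2+(2bc^q-3d^qc-3ab^q)x+3d^qb^q-c^{2q}$. In characteristic $2$, the $x^4$ and $x^0$ and odd-degree terms collapse and one obtains $(ad+bc)x^2+(a^{q+1}+b^{q+1}+c^{q+1}+d^{q+1})x+(a^qd^q+b^qc^q)$. I expect this Wronskian reduction, done carefully in each characteristic, to be the main computational obstacle. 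In particular I must check that, in the permutation cases, the critical points are exactly two totally ramified points, so that this quadratic's roots are precisely them. I would first verify the identity on the normal form $g=x^3$ conjugated by a general degree-one map preserving $\mu_{q+1}$, and then argue by degree and invariance for the general case.

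\textbf{Step 4 (translating back to $\mu_{q+1}$).} The critical set of $g$ is stable under $x\mapsto x^{-q}$. Hence its two points are either both in $\mu_{q+1}=\psi(\mathbb{P}^1(\mathbb{F}_q))$ or swapped with neither in $\mu_{q+1}$. The first alternative corresponds to $h\sim x^3$, the second to the Rédei case. Combining this with Step 2 yields conditions (1) and (2) in each characteristic.

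For the converse direction I still need to exclude the possibility that the quadratic has a root in $\mu_{q+1}$ while $h$ has a non-cyclic ramification type. I would handle this by noting that, when $h$ has a totally ramified $\mathbb{F}_q$-point, Riemann--Hurwitz with total degree $4$ forces a second totally ramified point. Such an $h$ is then equivalent to $x^3$, which is a permutation precisely when $\gcd(3,q-1)=1$. The analogous argument applies to the conjugate case.
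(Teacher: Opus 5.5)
Your architecture (pass to $\mathbb{P}^1(\mathbb{F}_q)$ via Lemma~\ref{lem6}, apply the degree-$3$ classification, then locate the two totally ramified points relative to $\mu_{q+1}$) is the paper's. There is, however, a genuine gap, and it starts with your reading of Proposition~\ref{pro1}.

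\textbf{What Proposition~\ref{pro1} is.} It is not the statement that $D$ has no roots in $\mu_{q+1}$. That is automatic once $\deg g=3$, because a root $\alpha\in\mu_{q+1}$ of $D$ gives $N(\alpha)=\alpha^3D(\alpha)^q=0$. Proposition~\ref{pro1} says that $bd^q\neq ac^q$ and that the Wronskian $R=N'D-ND'=(bd^q-ac^q)x^4+2(cd^q-ab^q)x^3+\cdots$ equals $C(x^2+Ax+B)^2$. Via Lemma~\ref{lem13} ($e_R(\alpha)=e_g(\alpha)-1$ at tame points), this is how the paper imposes the ramification type ``two finite points, each totally ramified''. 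The sufficiency direction needs exactly this hypothesis: once it holds, the two branch points are the roots of the displayed quadratic, and Lemma~\ref{lem14} finishes.

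\textbf{Without it the ``if'' direction is false.} So no argument can close your converse step. Take $q=7$, $a=c=d=1$, $b=0$, so $g=(x^3+x^2+1)/(x^3+x+1)$ has degree $3$. The displayed quadratic is $3x^2-3x-1=3(x-4)^2$, and $4\notin\mu_8$, so condition (2) holds with $\gcd(3,8)=1$. Yet, writing $\mathbb{F}_{49}=\mathbb{F}_7(i)$ with $i^2=-1$, one finds $g(-1)=g(2+2i)=-1$, and $2+2i\in\mu_8$. Here $R=-x^4+2x^3+x^2+2x-1$ is not a constant times a square, so Proposition~\ref{pro1} correctly rejects this $g$.

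\textbf{Your proposed repair also fails.} Riemann--Hurwitz does not force a second totally ramified point, since $2+1+1=4$ is allowed. For example, $x^3+x^2$ with $p\neq 2,3$ is totally ramified only at $\infty$, with simple critical points $0$ and $-2/3$. Moreover, in characteristic $2$ points with $e=2$ are wild, so ``$h$ is tamely ramified for $p\neq 3$'' is not right either.

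\textbf{Step 3 confuses critical points with critical values.} The displayed quadratics are not reductions of the Wronskian; their roots are critical values, not critical points. If $\beta$ has a unique preimage $\alpha$, then $\alpha$ is a triple root of $F=N-\beta D$. For $p\neq 2,3$, solve $F''(\alpha)=0$ for $\alpha$ and substitute into $F'(\alpha)=0$; this gives $3(d^q-\beta a)(b^q-\beta c)=(c^q-\beta b)^2$, which is the displayed quadratic in $\beta$. In general its roots are just the $\beta$ for which $N'-\beta D'$ has a repeated root, which is why it says nothing without Proposition~\ref{pro1}. In characteristic $2$ one instead inserts $\alpha^2=(b^q-\beta c)/(d^q-\beta a)$ from $F'(\alpha)=0$ into $F(\alpha)=0$. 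By contrast, the Wronskian in characteristic $2$ is $(bd^q+ac^q)x^4+(a^{q+1}+b^{q+1}+c^{q+1}+d^{q+1})x^2+(b^qd+a^qc)$. Its $x^4$ and $x^0$ terms do not collapse, so the identity you planned to verify is false.

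\textbf{Step 4 survives.} Since $g(x^{-q})=g(x)^{-q}$ and each branch point has a unique preimage, $\alpha_i\in\mu_{q+1}$ if and only if $\beta_i\in\mu_{q+1}$. This is the equivalence (2)$\Leftrightarrow$(3) of Lemma~\ref{lem14}. It just has to be applied to the roots of the displayed quadratic, viewed as branch points.

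\textbf{A gap in the statement itself.} The condition $bd^q\neq ac^q$ says exactly that $\infty$, and hence $0$, is unramified. So the statement omits total ramification at $\{0,\infty\}$, i.e.\ $b=c=0$. In that case Proposition~\ref{pro1} fails, yet $g=(d^qx^3+a^q)/(ax^3+d)$ permutes $\mu_{q+1}$ whenever $a^{q+1}\neq d^{q+1}$ and $\gcd(3,q+1)=1$.
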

It is easy to see that when $g(x)$ is not separable, its permutation behavior of on $\mu_{q+1}$ can be readily determined (see Section 4.3 case 2). When the characteristic of $\mathbb{F}_q$ is $3$ and $g(x)$ is a separable rational function of degree $3$, we obtain

\begin{thm}\label{thm9}
    Let $q=p^m$, where $p$ is prime and $m$ is a positive integer. For $a,b,c,d\in \mathbb{F}_{q^2}$, the rational function $g(x)=\frac{N(x)}{D(x)}=\frac{d^qx^3+c^qx^2+b^qx+a^q}{ax^3+bx^2+cx+d}\in \mathbb{F}_{q^2}(x)$. Assume that $\text{deg}(g)=3$. Let $\alpha=\frac{cd^q-ab^q}{bd^q-ac^q}$ and $\beta=\frac{c^q}{b}$. When the characteristic of $\mathbb{F}_q$ is $3$, the function $g(x)$ permutes $\mu_{q+1}$ if and only if $\alpha, \beta\in \mu_{q+1}$,  Proposition \ref{pro2} holds, and one of the following conditions is satisfied:

    (1) $\alpha\not\in \mathbb{F}_q$, $\beta\not\in \mathbb{F}_q$, and $\frac{B_3-\beta A_3}{\beta A_1-B_1}$ is a nonsquare of $\mathbb{F}_q^{*}$, where 
    $A_1=d^q\alpha^3+c^q\alpha^2+b^q\alpha+a^q$, $A_3=c^q(2\alpha^2+1)+b^q(2\alpha+\alpha^3)$, $B_1=a\alpha^3+b\alpha^2+c\alpha+d$, and $B_3=b(2\alpha^2+1)+c(2\alpha+\alpha^3)$;

    (2) $\alpha\not\in \mathbb{F}_q$, $\beta\in \mathbb{F}_q$, and $\frac{\beta\alpha^qB_3-\alpha A_3}{\alpha A_1-\beta\alpha^q B_1}$ is a nonsquare of $\mathbb{F}_q^{*}$, where $A_1,A_3,B_1,B_3$ are defined as in $(1)$;

    (3) $\alpha\in \mathbb{F}_q$, $\beta\not\in \mathbb{F}_q$, and $\frac{B_3-\beta A_3}{\beta A_1-B_1}$ is a nonsquare of $\mathbb{F}_q^{*}$, where $A_1=d^q\alpha^3+c^q\alpha^2+b^q\alpha+a^q$, $A_3=c^q(2\alpha^2+\alpha^2\beta^{q-1})+b^q(2\alpha+\alpha\beta^2)$, $B_1=a\alpha^3+b\alpha^2+c\alpha+d$, and $B_3=b(2\alpha^2+\alpha^2\beta^{q-1})+c(2\alpha+\alpha\beta^2)$;

    (4) $\alpha\in \mathbb{F}_q$, $\beta\in \mathbb{F}_q$. Let $\gamma\in \mathbb{F}_{q^2}$ be an element of order $q+1$. Then the expression $\frac{\beta\gamma^qB_3-\gamma A_3}{\gamma A_1-\beta\gamma^q B_1}$ is a nonsquare of $\mathbb{F}_q^{*}$, where $A_1=d^q\alpha^3+c^q\alpha^2+b^q\alpha+a^q$, $A_3=c^q(2\alpha^2+\alpha^2\gamma^{q-1})+b^q(2\alpha+\alpha\gamma^2)$, $B_1=a\alpha^3+b\alpha^2+c\alpha+d$, and $B_3=b(2\alpha^2+\alpha^2\gamma^{q-1})+c(2\alpha+\alpha\gamma^2)$.
\end{thm}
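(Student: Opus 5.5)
Our plan is to reduce the problem to the classification of degree-$3$ permutation rational functions over $\mathbb{P}^1(\mathbb{F}_q)$, by moving $\mu_{q+1}$ onto $\mathbb{P}^1(\mathbb{F}_q)$ with a degree-one map. Fix $\delta\in\mathbb{F}_{q^2}\setminus\mathbb{F}_q$ and set $\rho(x)=\frac{x-\delta^q}{x-\delta}$. This map is a bijection from $\mathbb{P}^1(\mathbb{F}_q)$ onto $\mu_{q+1}$. The symmetry $N(x)=x^3D(1/x^q)^q$ gives $g(x)^q=1/g(x^q)$, hence $g(\mu_{q+1})\subseteq\mu_{q+1}\cup\{\text{poles}\}$. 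Proposition~\ref{pro2} (the analogue of Proposition~\ref{pro1} in characteristic $3$) should be exactly the condition that $D$ has no root in $\mu_{q+1}$, so that $g$ maps $\mu_{q+1}$ into itself. Then $h=\rho^{-1}\circ g\circ\rho$ lies in $\mathbb{F}_q(x)$, has degree $3$, and $g$ permutes $\mu_{q+1}$ if and only if $h$ permutes $\mathbb{P}^1(\mathbb{F}_q)$.

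Next we invoke the classification of separable degree-$3$ permutation rational functions in characteristic $3$. Up to composing with degree-one maps over $\mathbb{F}_q$ on both sides, such an $h$ is either $x^3$ (excluded by separability), or an additive-type map $x^3-\epsilon x$ with $\epsilon$ a nonsquare in $\mathbb{F}_q^*$, which fixes $\infty$ totally ramified. In the wild case the key geometric invariant is a unique totally ramified point, that is, a point where $g-g(P)$ vanishes to order $3$. So we first locate a totally ramified point $\alpha$ of $g$ and its image $\beta$. Writing $g(x)-g(\alpha)$ as a cube $(x-\alpha)^3$ up to a factor, and comparing coefficients in characteristic $3$ (where $(x-\alpha)^3=x^3-\alpha^3$), we expect to find $\alpha=\frac{cd^q-ab^q}{bd^q-ac^q}$ and $\beta=c^q/b$, the value relation read off from the $x^2,x$ coefficients. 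For $h$ to be of the permutation type, this ramification point and its value must be $\mathbb{F}_q$-rational for $h$, which means $\alpha,\beta\in\mu_{q+1}$. This gives the first necessary condition.

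With $\alpha$ and $\beta$ in hand, we pick Möbius maps sending $\alpha\mapsto\infty$ and $\beta\mapsto\infty$ that respect $\mu_{q+1}$. We use $x\mapsto\alpha\rho(x)$-type maps, and when $\alpha$ or $\beta$ lies in $\mathbb{F}_q$ (so it equals $\pm1$) we use a generic $\gamma$ of order $q+1$ instead; this explains the four cases. The conjugated function then becomes a polynomial $x^3+ux+v$, $u,v\in\mathbb{F}_q$, after also absorbing the $x^2$ term. Expanding $N(\alpha y)$ and $D(\alpha y)$ around $y$ near $\alpha$ to third order yields the quantities $A_1,A_3,B_1,B_3$ (the constant and third-order Taylor-type coefficients in characteristic $3$). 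The linear coefficient $u$ comes out as, up to squares, $\frac{B_3-\beta A_3}{\beta A_1-B_1}$ or its twisted variants. The polynomial $x^3+ux+v$ permutes $\mathbb{F}_q$ if and only if $-u$ is a nonsquare or zero, and $u=0$ is excluded by separability. So the criterion is that this ratio is a nonsquare of $\mathbb{F}_q^*$, with the sign absorbed since $-1$ is handled by the normalization.

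The main obstacle is the third step: carrying the Möbius conjugation through explicitly enough to identify the coefficient $u$ in each of the four cases, and checking that the square class of $u$ matches the stated ratios. We would handle this by first working out case (1) in full and then obtaining the others by substituting the $\gamma$-based transformation.
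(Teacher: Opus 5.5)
Your overall route is the paper's: pass to $\mathbb{P}^1(\mathbb{F}_q)$ via Lemma~\ref{lem6}, and use Lemma~\ref{lem7} to force a single totally ramified point over a branch point $\beta$. You then place both points in $\mu_{q+1}$ and conjugate by explicit M\"obius maps sending them to $\infty$. The gap is at the step specific to characteristic $3$.

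\textbf{Proposition~\ref{pro2} is misread.} It is not the statement that $D$ has no root in $\mu_{q+1}$. That is automatic once $\deg g=3$, since a root $\zeta\in\mu_{q+1}$ of $D$ gives $N(\zeta)=\zeta^3D(\zeta)^q=0$. Proposition~\ref{pro2} says that the triple root is the \emph{only} ramification point. The paper gets it from $R=N'D-ND'=A(x-\alpha)^4$ via Lemma~\ref{lem13}.

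Your comparison does not supply this. Writing $N-\beta D=\lambda(x^3-\alpha_0^3)$ for the triple root $\alpha_0$ gives only $\beta=c^q/b$, $b^q=\beta c$ and $\alpha_0^3=\frac{\beta d-a^q}{d^q-\beta a}=\frac{c^qd-a^qb}{bd^q-ac^q}$; it does not give $\alpha_0$ itself. Under these relations (and $bd^q\neq ac^q$) one computes $\frac{cd^q-ab^q}{bd^q-ac^q}=\frac{c}{b}$ and
\[
R(x)=b(d^q-\beta a)\,(x-\tfrac{c}{b})\,(x^3-\alpha_0^3).
\]
So $c/b$ is a second, tame ramification point unless $(c/b)^3=\alpha_0^3$. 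Since cubing is injective in characteristic $3$, that is equivalent to $c/b=\alpha_0$, and it is exactly Proposition~\ref{pro2}. Your phrase ``we expect to find $\alpha=\frac{cd^q-ab^q}{bd^q-ac^q}$'' therefore hides the whole content of Proposition~\ref{pro2}, and its necessity is never proved.

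\textbf{The normalization step fails for the same reason.} Once $\alpha_0$ and $\beta$ are sent to $\infty$, $h$ is a cubic polynomial, and only affine changes remain. In characteristic $3$ we have $(sx+t)^3=s^3x^3+t^3$, so no such change can remove a nonzero $x^2$ coefficient $v$; you cannot ``absorb'' it. Moreover, $v\neq 0$ makes $h'=2vx+w$ vanish at a finite point. That is a second ramification point, so $h$ is not linearly equivalent to $x^3-\epsilon x$ and does not permute $\mathbb{P}^1(\mathbb{F}_q)$.

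Hence $v=0$ (the paper's condition $\beta A_2=B_2$) is a genuine necessary condition, not a normalization. It holds precisely because of Proposition~\ref{pro2} together with $\alpha,\beta\in\mu_{q+1}$. In the sufficiency direction you must derive $v=0$ from these hypotheses rather than normalize it away.

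\textbf{The sign needs care.} No sign can be ``absorbed''. When $v=0$ the normalized linear coefficient is exactly $u=\frac{\beta A_3-B_3}{\beta A_1-B_1}$, so $-u$ is the ratio in the statement. This matters because $-1$ is a nonsquare in $\mathbb{F}_{3^m}$ for odd $m$.
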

Using these results and together with those in Section $3$, we can explicitly determine many permutation polynomials induced from degree-3 rational functions which permute $\mu_{q+1}$, including  permutation polynomials of the form $ax^{2q-1}+bx^q+cx+dx^{q^2-q+1}$ and $ax^{3q}+bx^{2q+1}+cx^{q+2}+dx^3$ over $\mathbb{F}_{q^2}$, where $a,b,c,d\in\mathbb{F}_{q^2}$. These results are presented in Section 5.

The rest of the paper is organized as follows. Section 2 introduces some preliminary results and  techniques. Section 3 explicitly determines the degree of $g(x)$ in terms of coefficients. Section 4 provides conditions under which $g(x)$ permutes $\mu_{q+1}$. In Section 5, based on these conditions, we completely determine several classes of permutation polynomials, including those of the forms $ax^{2q-1}+bx^q+cx+dx^{q^2-q+1}$ and $ax^{3q}+bx^{2q+1}+cx^{q+2}+dx^3$ over $\mathbb{F}_{q^2}$. Section 6 concludes the paper and discusses future work.

\section{Preliminaries}
In this section, we present some known results to be used in our proof. Most of these results are from \cite{ding2023determination}.

There are several criteria to decide whether a polynomial is a permutation polynomial. The AGW criterion is one of the most useful criteria in the literature.

\begin{lem}\cite{akbary2011constructing}(The AGW criterion)
      Let $A,S$ and $\bar{S}$ be finite sets with $|S|=|\bar{S}|$. Let $f, \bar{f}, \lambda, \bar{\lambda}$ be maps on finite sets such that $f: A\rightarrow A$, $\bar{f}:S\rightarrow \bar{S}$, $\lambda: A\rightarrow S$, $\bar{\lambda}: A\rightarrow \bar{S}$, and $\bar{\lambda}\circ f=\bar{f}\circ \lambda$. The relation can be illustrated in a commutative diagram.
      
\begin{center}
    \begin{tikzcd}
A \arrow[rr, "f"] \arrow[dd, "\lambda"] &  & A \arrow[dd, "\bar{\lambda}"] \\
                                        &  &                               \\
S \arrow[rr, "\bar{f}"]                 &  & \bar{S}                      
\end{tikzcd}
\end{center}

If $\lambda$ and $\bar{\lambda}$ are surjective, then the following are equivalent:

(i) $f$ is a bijection from $A$ to itself;

(ii) $\bar{f}$ is a bijection from $S$ to $\bar{S}$ and $f$ is injective on $\lambda^{-1}(s)$ for each $s\in S$.
\end{lem}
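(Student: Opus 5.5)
The plan is to prove the equivalence directly from the commutativity $\bar{\lambda}\circ f=\bar{f}\circ\lambda$ and the surjectivity of $\lambda$ and $\bar{\lambda}$. Finiteness is used only once, to turn injectivity into bijectivity. Throughout, for $s\in S$ we write $\lambda^{-1}(s)$ for the fiber over $s$. Surjectivity of $\lambda$ makes every such fiber nonempty, so the sets $\lambda^{-1}(s)$, $s\in S$, partition $A$. The same holds for the fibers $\bar{\lambda}^{-1}(\bar{s})$, $\bar{s}\in\bar{S}$.

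\emph{(i)$\Rightarrow$(ii).} Suppose $f$ is a bijection of $A$. Then $f$ is injective on all of $A$, hence on every fiber $\lambda^{-1}(s)$. For surjectivity of $\bar{f}$, take $\bar{s}\in\bar{S}$. Choose $a\in A$ with $\bar{\lambda}(a)=\bar{s}$, using surjectivity of $\bar{\lambda}$. Then write $a=f(a')$, using surjectivity of $f$. Commutativity gives $\bar{s}=\bar{\lambda}(f(a'))=\bar{f}(\lambda(a'))$, so $\bar{f}$ is onto. Since $|S|=|\bar{S}|$ is finite, $\bar{f}$ is a bijection.

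\emph{(ii)$\Rightarrow$(i).} Since $A$ is finite, it suffices to show that $f$ is injective. Suppose $f(a_1)=f(a_2)$. Applying $\bar{\lambda}$ and commutativity gives $\bar{f}(\lambda(a_1))=\bar{f}(\lambda(a_2))$. Injectivity of $\bar{f}$ then forces $\lambda(a_1)=\lambda(a_2)=:s$, so $a_1$ and $a_2$ lie in the same fiber $\lambda^{-1}(s)$. Injectivity of $f$ on that fiber yields $a_1=a_2$. Thus $f$ is injective, and hence bijective on the finite set $A$.

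There is no real obstacle here. The only care needed is to say exactly where each hypothesis enters: surjectivity of $\bar{\lambda}$ in the forward direction, injectivity of $\bar{f}$ together with fiberwise injectivity in the reverse direction, and finiteness with $|S|=|\bar{S}|$ to upgrade injections or surjections to bijections. Surjectivity of $\lambda$ is not actually needed for these implications; it only ensures the fibers partition $A$ nontrivially.
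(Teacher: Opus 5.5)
The paper gives no proof of this lemma. It is quoted from Akbary--Ghioca--Wang \cite{akbary2011constructing} and used as a black box, so there is nothing internal to compare against. Your argument is correct and is the standard direct proof:
\begin{itemize}
\item In (i)$\Rightarrow$(ii), surjectivity of $\bar f$ follows from surjectivity of $\bar\lambda\circ f=\bar f\circ\lambda$.
\item In (ii)$\Rightarrow$(i), injectivity of $\bar f$ forces any collision $f(a_1)=f(a_2)$ into a single fiber of $\lambda$, where fiberwise injectivity finishes.
\end{itemize}

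Two small remarks.
\begin{itemize}
\item Your opening sentence says finiteness is used only once, but you use it in both directions. In (i)$\Rightarrow$(ii) it upgrades $\bar f$ from surjective to bijective via $|S|=|\bar S|<\infty$. In (ii)$\Rightarrow$(i) it upgrades $f$ from injective to bijective. Your closing paragraph states this correctly.
\item Your observation that surjectivity of $\lambda$ is never used is right, and it can be sharpened. Under either (i) or (ii), $f$ is onto, so $\bar f(\lambda(A))=\bar\lambda(f(A))=\bar\lambda(A)=\bar S$. Together with $|S|=|\bar S|$, this forces $\lambda(A)=S$. So surjectivity of $\lambda$ is a consequence here rather than a needed hypothesis.
\end{itemize}
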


In particular, if we let $A=\mathbb{F}_{q}^{*}$ and the maps $\lambda=\bar{\lambda}=x^s$, where $q-1=ds$ for some positive integers $s,d$, then we obtain the so-called multiplicative case of the AGW criterion.

\begin{cor}\cite{lee1997some,wan1991permutation,wang2007cyclotomic,zieve2009some}\label{cor1}
     Let $q-1=ds$, where $d$ and $s$ are two positive integers, $q$ is a prime power and $f(x)$ is an arbitrary polynomial over $\mathbb{F}_q$. Then $p(x)=x^rf(x^s)$ is a permutation polynomial over $\mathbb{F}_{q}$ if and only if $\text{gcd}(r,s)=1$ and $x^rf(x)^s$ permutes the set $\mu_{d}$ of $d$-th roots of unity.
\end{cor}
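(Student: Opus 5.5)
We prove Corollary \ref{cor1} as a specialization of the AGW criterion with $A=\mathbb{F}_q^{*}$, $S=\bar S=\mu_d$ and $\lambda=\bar\lambda=x^s$. Since $q-1=ds$ and $\mathbb{F}_q^{*}$ is cyclic, the map $x\mapsto x^s$ sends $\mathbb{F}_q^{*}$ onto $\mu_d$, and each fiber is a coset $y\mu_s$ of size $s$. Put $f_0(x)=x^rf(x^s)$ and $\bar f(x)=x^rf(x)^s$. For $x\in\mathbb{F}_q^{*}$,
\[(x^rf(x^s))^s=(x^s)^r f(x^s)^s=\bar f(x^s),\]
so $\bar\lambda\circ f_0=\bar f\circ\lambda$. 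Also $p(0)=0$ when $r\ge 1$, so $p$ permutes $\mathbb{F}_q$ if and only if it permutes $\mathbb{F}_q^{*}$.

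One point needs care. If $f(x^s)=0$ for some $x\in\mathbb{F}_q^{*}$, then $f_0$ does not map $\mathbb{F}_q^{*}$ into itself. However, in that case $p$ is not a permutation, because two distinct elements would map to $0$. In the same case $\bar f$ takes the value $0\notin\mu_d$ at $x^s$, so it does not permute $\mu_d$. Both sides of the equivalence therefore fail together, and we may assume $f$ has no roots in $\mu_d$. Then $f_0:\mathbb{F}_q^{*}\to\mathbb{F}_q^{*}$ and $\bar f:\mu_d\to\mu_d$, and the AGW criterion applies. It says that $p$ permutes $\mathbb{F}_q^{*}$ if and only if $\bar f$ permutes $\mu_d$ and $f_0$ is injective on every coset $y\mu_s$.

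The remaining step is to show that, provided $f(y^s)\neq0$, injectivity of $f_0$ on every fiber is equivalent to $\gcd(r,s)=1$. Take $\zeta\in\mu_s$. Then $(y\zeta)^s=y^s$, so
\[f_0(y\zeta)=y^r\zeta^r f(y^s)=\zeta^r f_0(y).\]
Because $f_0(y)\neq 0$, the map $f_0$ is injective on $y\mu_s$ exactly when $\zeta\mapsto\zeta^r$ is injective on the cyclic group $\mu_s$ of order $s$. This holds if and only if $\gcd(r,s)=1$, and the condition does not depend on $y$. Combining this with the AGW equivalence proves the corollary.

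The only delicate part is the bookkeeping above: the zeros of $f$ on $\mu_d$ and the value at $0$ must be dealt with before the AGW criterion is invoked. Everything else is direct.
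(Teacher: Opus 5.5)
Your proof is correct and follows the paper's route: the paper obtains this corollary exactly as the multiplicative specialization $A=\mathbb{F}_q^{*}$, $\lambda=\bar\lambda=x^s$ of the AGW criterion, and you additionally supply the details it leaves implicit, namely disposing of roots of $f$ in $\mu_d$ first and showing that injectivity on each fiber $y\mu_s$ is equivalent to $\gcd(r,s)=1$ via $f_0(y\zeta)=\zeta^r f_0(y)$. You should state the hypothesis $r\ge 1$ explicitly, as the cited sources do: you use it to get $p(0)=0$, and the statement fails without it (e.g.\ $r=0$, $s=1$, $q=3$, $f(x)=x+1$ permutes $\mathbb{F}_3$ but not $\mu_2$).
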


Next, we provide some background on self-conjugate reciprocal polynomials. Before introducing their definition, we first fix some notation.

(1) Let $g(x)\in \mathbb{F}_{q^2}(x)$, we define $g^{(q)}(x)$ to be the rational function obtained from $g(x)$.

(2) For any nonzero $A(x)\in \mathbb{F}_{q^2}[x]$, we define $\hat{A}(x):=x^{\text{deg}(A)}A^{(q)}(1/x)$.

Explicitly, if $A(x)=\sum\limits_{i=0}^{n}\alpha_i x^i$ with $\alpha_i\in \mathbb{F}_{q^2}$ and $\alpha_n\neq 0$ then $\hat{A}(x)=\sum\limits_{i=0}^{n}\alpha_i^q x^{n-i}$.

Now, we present the definition of self-conjugate reciprocal polynomials.

\begin{defn}\cite{ding2023determination}
    \hspace{0.5em}We say that a nonzero $A(x)\in \mathbb{F}_{q^2}[x]$ is self-conjugate reciprocal (or SCR for short) if $\hat{A}(x)=\alpha A(x)$ for some $\alpha\in \mathbb{F}_{q^2}$.
\end{defn}

SCR polynomials have many simple properties.

\begin{lem}\cite{ding2023determination}\label{lem2}
    All of the following hold:

    (1) If $A(x)\in \mathbb{F}_{q^2}[x]$ is SCR, then $\hat{A}(x)/A(x)\in \mu_{q+1}$.

    (2) For nonconstant $g_1,g_2\in \mathbb{F}_{q^2}(x)$, we have $(g_1\circ g_2)^{(q)}=g_1^{(q)}\circ g_2^{(q)}$.

    (3) If $A(x)\in \mathbb{F}_{q^2}[x]$ is nonzero and $\alpha\in \bar{\mathbb{F}}_q^{*}$, then the multiplicity of $\alpha$ as a root of $A(x)$ equals the multiplicity of $\alpha^{-q}$ as a root of $\hat{A}(x)$.

    (4) $A(x)\in \mathbb{F}_{q^2}[x]$ is SCR if and only if the multiset of roots of $A(x)$ is preserved by the function $\alpha\mapsto \alpha^{-q}$.

    (5) Every degree-1 SCR polynomial has a root in $\mu_{q+1}$.
\end{lem}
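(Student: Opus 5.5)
We prove the five parts of Lemma \ref{lem2} in a convenient order, using only the definitions of $A^{(q)}$ and $\hat{A}$.

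\textbf{Part (2).} Since $z\mapsto z^q$ is a field automorphism of $\mathbb{F}_{q^2}$ extended coefficientwise, it is a ring automorphism of $\mathbb{F}_{q^2}(x)$ fixing $x$. Hence applying it to $g_1(g_2(x))$ gives $g_1^{(q)}(g_2^{(q)}(x))$. This is a direct check on coefficients.

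\textbf{Part (3).} Write $A(x)=\alpha_n\prod_i (x-\beta_i)$, with $\alpha_n\neq 0$ the leading coefficient and $\beta_i\in\bar{\mathbb{F}}_q$ the roots listed with multiplicity. Extend the Frobenius $z\mapsto z^q$ to $\bar{\mathbb{F}}_q$. Then $A^{(q)}(x)=\alpha_n^q\prod_i(x-\beta_i^q)$, and
\[\hat{A}(x)=x^nA^{(q)}(1/x)=\alpha_n^q\prod_i(1-\beta_i^q x).\]
\begin{itemize}
\item Each factor with $\beta_i\neq 0$ equals $-\beta_i^q(x-\beta_i^{-q})$.
\item Each factor with $\beta_i=0$ is the constant $1$, which only lowers the degree.
\end{itemize}
So every nonzero root $\beta$ of $A$ of multiplicity $k$ gives the root $\beta^{-q}$ of $\hat{A}$ with the same multiplicity $k$. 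This is (3).

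\textbf{Part (4).} Suppose $A$ is SCR, so $\hat{A}=\alpha A$; here $\alpha\neq 0$ because $\hat{A}\neq 0$. Then $\hat{A}$ and $A$ have the same roots. By (3), the multiset of nonzero roots of $A$ is preserved by $\beta\mapsto\beta^{-q}$.

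Zero roots need separate care. If $0$ were a root of $A$, then $\deg\hat{A}<\deg A$, contradicting $\hat{A}=\alpha A$. So an SCR polynomial has no zero root, and the statement should be read as being about nonzero roots. In any case, a multiset containing $0$ cannot be preserved by $\beta\mapsto\beta^{-q}$, so no zero roots occur on either side.

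Conversely, suppose the multiset of roots is preserved and $0$ is not a root. Then $\deg\hat{A}=n$, and $\hat{A}$ has the same roots with the same multiplicities as $A$. Hence $\hat{A}$ is a scalar multiple of $A$, i.e. $A$ is SCR.

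\textbf{Part (1).} Apply the hat operation twice:
\[\hat{\hat{A}}(x)=x^n\big(x^{-n}A^{(q^2)}(x)\big)=A(x),\]
using $A^{(q^2)}=A$ since $z^{q^2}=z$ on $\mathbb{F}_{q^2}$. Also $\widehat{\alpha A}=\alpha^q\hat{A}$, since $\alpha A$ has the same degree as $A$. Therefore
\[A=\hat{\hat{A}}=\widehat{\alpha A}=\alpha^q\hat{A}=\alpha^{q+1}A.\]
So $\alpha^{q+1}=1$, and $\hat{A}/A=\alpha\in\mu_{q+1}$.

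\textbf{Part (5).} Let $A=\alpha_1x+\alpha_0$ be an SCR polynomial of degree 1. Its root is $\beta=-\alpha_0/\alpha_1$. By (4), $\beta\neq 0$ and the single root is fixed: $\beta^{-q}=\beta$. Hence $\beta^{q+1}=1$, so $\beta\in\mu_{q+1}$.

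The only delicate point in the whole argument is the bookkeeping of zero roots in (3) and (4), which forces the degree condition on $\hat{A}$. The remaining steps are routine.
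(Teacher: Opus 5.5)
Your proof is correct, and it is the standard direct verification. The paper gives no proof of this lemma, since it is quoted from Ding and Zieve, so there is no in-paper argument to compare against. Your route is exactly how such statements are checked: factor $A$ over $\bar{\mathbb{F}}_q$ to get $\hat{A}(x)=\alpha_n^q\prod_i(1-\beta_i^q x)$ for (3), deduce (4) and (5) from that, and apply the hat operation twice for (1).

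Three small points deserve tightening.

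In (1), the identity $\hat{\hat{A}}=A$ requires $\deg\hat{A}=\deg A$. This holds for SCR $A$ because $\hat{A}=\alpha A$ with $\alpha\neq 0$. It fails in general: for $A=x$ one has $\hat{A}=1$ and $\hat{\hat{A}}=1$. So state it explicitly, citing the degree observation you made in (4).

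In (3), passing from the factorization to equality of multiplicities uses that $\beta\mapsto\beta^{-q}$ is injective on $\bar{\mathbb{F}}_q^{*}$, so distinct roots cannot merge. Worth one clause.

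In (4), your aside that the statement ``should be read as being about nonzero roots'' is the wrong reading. Under it the converse fails for $A=x$: its multiset of nonzero roots is empty and hence trivially preserved, yet $\hat{A}=1$ is not a scalar multiple of $x$. The correct reading is the one you actually use in the next sentence. There, $0$ would have to map to $\infty$, so a preserved multiset cannot contain $0$. With that reading your converse argument goes through as written.
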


In order to prove $x^rf(x)^s$ permutes the set $\mu_d$ for specific values of $s$ and $d$, we require some knowledge of rational functions.

Let $K$ be a field, $g(x)=\frac{N(x)}{D(x)}$, where $N(x), D(x)\in K[x]$ with $N(x)$ and $D(x)$ are coprime. Hence $g(x)$ can be viewed as a function $\mathbb{P}^{1}(K)\rightarrow \mathbb{P}^{1}(K)$, where $\mathbb{P}^{1}(K)=K\cup \{\infty \}$. When $g(x)\neq 0$, we can define the degree of $g(x)$ as $\text{deg}(g)=\text{max}(\text{deg}(N),\text{deg}(D))$. We say that a nonconstant $g(x)\in K(x)$ is separable if the field extension $K(x)/K(g(x))$ is separable, where $x$ is transcendental over $K$. It is known that $g(x)$ is separable if and only if $g(x)\not\in K(x^p)$, where $p$ is the characteristic of $K$. For a proof, see \cite[Lemma 2.2]{ding2020low}.

We say that nonconstant $f,g\in K(x)$ are linearly equivalent if $g=\rho\circ f\circ \sigma$ for some $\rho,\sigma\in K(x)$ of degree $1$. 

Rational functions of degree 1 possess many simple properties. 

For any field $K$ and any $\rho(x)\in K(x)$ of degree $1$, if we write $\rho(x)=(\alpha x+\beta)/(\gamma x+\delta)$ with $\alpha, \beta, \gamma, \delta\in K$, then its inverse is $\rho^{-1}(x)=(\delta x-\beta)/(-\gamma x+\alpha)$. If we view $\rho(x)$ as an element of $PGL_2(K)$, with $A=\begin{bmatrix}
\alpha & \beta \\
\gamma & \delta
\end{bmatrix}$, then it is easy to see that $\rho^{-1}(x)$ corresponds to $A^{-1}=\begin{bmatrix}
\delta & -\beta \\
-\gamma & \alpha
\end{bmatrix}$. Therefore, we can regard rational functions of degree 1 as elements of the projective linear group $PGL_2(K)$. When $K=\mathbb{F}_{q^2}$, $\rho\in \mathbb{F}_{q^2}(x)$ has degree $1$, then $(\rho^{(q)})^{-1}=(\rho^{-1})^{(q)}$.

The following two lemmas are very useful in studying the bijection of $\mu_{q+1}$ and $\mathbb{P}^{1}(\mathbb{F}_q)$.

\begin{lem}\cite{zieve2013permutation}\label{lem3}
    A degree-1 $\rho(x)\in \mathbb{F}_{q^2}(x)$ permutes $\mu_{q+1}$ if and only if $\rho(x)=(\beta^qx+\alpha^q)/(\alpha x+\beta)$ for some $\alpha, \beta\in \mathbb{F}_{q^2}$ with $\alpha^{q+1}\neq \beta^{q+1}$.
\end{lem}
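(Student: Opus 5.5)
We prove the two directions separately. For sufficiency we verify that $\rho$ maps $\mu_{q+1}$ into itself and is injective; for necessity we use the structure of $\mu_{q+1}$ as the fixed set of the involution $x\mapsto x^{-q}$.

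\emph{Sufficiency.} Let $\rho(x)=(\beta^qx+\alpha^q)/(\alpha x+\beta)$ with $\alpha^{q+1}\neq\beta^{q+1}$. The determinant of the associated matrix is $\beta^{q+1}-\alpha^{q+1}\neq 0$, so $\rho$ has degree $1$ and is injective on $\mathbb{P}^1(\bar{\mathbb{F}}_q)$. Take $x\in\mu_{q+1}$, so that $x^q=x^{-1}$.

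First, the denominator does not vanish. If $\alpha x+\beta=0$ with $x\in\mu_{q+1}$, then $\beta^{q+1}=\alpha^{q+1}x^{q+1}=\alpha^{q+1}$, a contradiction.

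Next, the value lies in $\mu_{q+1}$. We compute
\[(\alpha x+\beta)^q=\alpha^q x^{-1}+\beta^q=x^{-1}(\beta^q x+\alpha^q),\]
hence
\[\rho(x)=\frac{x(\alpha x+\beta)^q}{\alpha x+\beta}=x(\alpha x+\beta)^{q-1}.\]
This is a product of elements of $\mu_{q+1}$, because $(\alpha x+\beta)^{q-1}\in\mu_{q+1}$ for a nonzero element of $\mathbb{F}_{q^2}$. Thus $\rho$ maps $\mu_{q+1}$ injectively into itself, and since $\mu_{q+1}$ is finite, $\rho$ permutes it.

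\emph{Necessity.} Suppose a degree-1 $\rho$ permutes $\mu_{q+1}$, and set $\iota(x)=x^{-q}$. On $\mathbb{P}^1(\bar{\mathbb{F}}_q)$ the map $\iota$ is the Frobenius composed with inversion, and its fixed points in $\bar{\mathbb{F}}_q^*$ are exactly $\mu_{q+1}$.

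Consider $\tau:=\iota\circ\rho\circ\iota$. Written out, $\tau(x)=1/\rho^{(q)}(1/x)$, which is again a degree-1 rational function over $\mathbb{F}_{q^2}$. For $x\in\mu_{q+1}$ we have $\iota(x)=x$ and $\rho(x)\in\mu_{q+1}$, so $\tau(x)=\rho(x)$. The two degree-1 functions $\tau$ and $\rho$ therefore agree on the $q+1\ge 3$ points of $\mu_{q+1}$, and hence $\tau=\rho$, since a degree-1 map is determined by its values at three points.

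Now write $\rho=(Ax+B)/(Cx+D)$. We compute
\[1/\rho^{(q)}(1/x)=\frac{D^qx+C^q}{B^qx+A^q}.\]
Equating this with $\rho$ in $PGL_2(\mathbb{F}_{q^2})$ gives a scalar $\lambda$ with
\[(A,B,C,D)=\lambda\,(D^q,C^q,B^q,A^q).\]
Applying this relation twice yields $\lambda^{q+1}=1$. By Hilbert 90 for $\mu_{q+1}$ we can write $\lambda=\nu^{q-1}$, and rescaling the coefficients by $\nu$ makes $\lambda=1$. Setting $\alpha=C$ and $\beta=D$ then gives $A=\beta^q$ and $B=\alpha^q$, which is the required form. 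Nondegeneracy of the matrix is exactly the condition $\beta^{q+1}-\alpha^{q+1}\neq 0$.

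The main obstacle is the scalar normalization step in the necessity direction, that is, checking that $\lambda\in\mu_{q+1}$ and absorbing it into a common rescaling of the coefficients. Once that is done, the remaining steps are routine.
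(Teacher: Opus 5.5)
The paper gives no proof of this lemma; it quotes it from \cite{zieve2013permutation}. Your argument therefore has to stand on its own, and it does.

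Sufficiency via the identity $\rho(x)=x(\alpha x+\beta)^{q-1}$ on $\mu_{q+1}$ is complete. Necessity is also correct. The functions $\rho$ and $\tau(x)=1/\rho^{(q)}(1/x)$ agree on the $q+1\ge 3$ points of $\mu_{q+1}$, so they coincide. The resulting relation $(A,B,C,D)=\lambda(D^q,C^q,B^q,A^q)$ forces $\lambda^{q+1}=1$, because $A=\lambda^{q+1}A$, $B=\lambda^{q+1}B$, and $A=B=0$ would force $C=D=0$. Writing $\lambda=\nu^{q-1}$ then rescales to $\lambda=1$. So the step you call the main obstacle is routine.

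One imprecision needs fixing. The map $\iota(x)=x^{-q}$ is itself a rational function of degree $q$. On $\mathbb{P}^1(\bar{\mathbb{F}}_q)$ the composite $\iota\circ\rho\circ\iota$ is $x\mapsto 1/\rho^{(q)}(x^{-q^2})$, which has degree $q^2$. It is not the degree-one function $1/\rho^{(q)}(1/x)$; the two agree only on $\mathbb{P}^1(\mathbb{F}_{q^2})$, which is also the only place where $\iota$ is really an involution. The fix is to define $\tau(x):=1/\rho^{(q)}(1/x)$ directly. Then check, for $x\in\mu_{q+1}$, that $\tau(x)=1/\rho^{(q)}(x^q)=\rho(x)^{-q}=\rho(x)$.

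Since the paper supplies no argument, the useful comparison is with the natural elementary alternative, a counting proof. Your route is structural. It identifies the degree-one maps preserving $\mu_{q+1}$ as exactly those with $\rho=x^{-1}\circ\rho^{(q)}\circ x^{-1}$. This is the same symmetry $g=x^{-1}\circ g^{(q)}\circ x^{-1}$ that the paper exploits for $g$ in the $3\mid q$ case of Section 4.3, and that underlies its SCR formalism. It also produces the normal form $(\beta^qx+\alpha^q)/(\alpha x+\beta)$ directly.

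The counting route goes as follows. There are $q^4-1-(q^2-1)(q+1)=(q^2-1)(q^2-q)$ pairs $(\alpha,\beta)$ with $\alpha^{q+1}\ne\beta^{q+1}$. Two pairs give the same function iff they differ by a factor in $\mathbb{F}_q^*$, so there are $(q+1)q(q-1)$ distinct such functions, all permuting $\mu_{q+1}$ by sufficiency. A degree-one map is determined by the images of three chosen points of $\mu_{q+1}$, so there are at most $(q+1)q(q-1)$ degree-one permutations of $\mu_{q+1}$, and equality follows.

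That argument avoids any identity-of-rational-functions reasoning. It also shows as a by-product that these maps act sharply $3$-transitively on $\mu_{q+1}$. However, it gives no conceptual reason for the shape of the normal form, which your proof does.
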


\begin{lem}\cite{zieve2013permutation}\label{lem4}
    A degree-1 $\rho(x)\in \mathbb{F}_{q^2}(x)$ maps $\mu_{q+1}$ to $\mathbb{P}^{1}(\mathbb{F}_q)$ if and only if $\rho(x)=(\delta x+\gamma \delta^q)/(x+\gamma)$ for some $\gamma\in \mu_{q+1}$ and $\delta\in \mathbb{F}_{q^2}\setminus \mathbb{F}_q$.
\end{lem}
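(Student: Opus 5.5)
The plan is to prove the two directions separately. Sufficiency is a direct computation using the identity $x^q=x^{-1}$ on $\mu_{q+1}$. Necessity comes from the fact that two degree-$1$ rational functions agreeing at three or more points are equal, applied to $\rho$ and $\rho^{(q)}(1/x)$.

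\emph{Sufficiency.} Let $\rho(x)=(\delta x+\gamma\delta^q)/(x+\gamma)$ with $\gamma\in\mu_{q+1}$ and $\delta\in\mathbb{F}_{q^2}\setminus\mathbb{F}_q$.
First I check that $\rho$ really has degree $1$. The relevant determinant is $\delta\gamma-\gamma\delta^q=\gamma(\delta-\delta^q)$, which is nonzero exactly because $\delta\notin\mathbb{F}_q$.
Now take $x\in\mu_{q+1}$ with $x\neq-\gamma$. Using $x^q=x^{-1}$ and $\gamma^q=\gamma^{-1}$, I compute
\[\rho(x)^q=\frac{\delta^q x^{-1}+\gamma^{-1}\delta}{x^{-1}+\gamma^{-1}}=\frac{\delta^q\gamma+\delta x}{\gamma+x}=\rho(x),\]
so $\rho(x)\in\mathbb{F}_q$. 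The remaining point $x=-\gamma$ lies in $\mu_{q+1}$ and is sent to $\infty$. Hence $\rho(\mu_{q+1})\subseteq\mathbb{P}^1(\mathbb{F}_q)$. Since a degree-$1$ map is injective and both sets have $q+1$ elements, this is in fact a bijection.

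\emph{Necessity.} Suppose the degree-$1$ function $\rho$ maps $\mu_{q+1}$ into $\mathbb{P}^1(\mathbb{F}_q)$.

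1. By injectivity and equal cardinalities, $\rho$ maps $\mu_{q+1}$ onto $\mathbb{P}^1(\mathbb{F}_q)$. In particular some point of $\mu_{q+1}$ is sent to $\infty$. Since $\mu_{q+1}$ is closed under $x\mapsto -x$, I can write this point as $-\gamma$ with $\gamma\in\mu_{q+1}$.

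2. A pole at the finite point $-\gamma$ means the denominator vanishes there. After scaling, this gives $\rho(x)=(\delta x+\varepsilon)/(x+\gamma)$ for some $\delta,\varepsilon\in\mathbb{F}_{q^2}$.

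3. For every $x\in\mu_{q+1}$ we have $\rho^{(q)}(1/x)=\rho(x)^q=\rho(x)$; at $x=-\gamma$ both sides are $\infty$. Thus the two degree-$1$ functions $\rho^{(q)}(1/x)$ and $\rho(x)$ agree on $q+1\ge 3$ points. A degree-$1$ rational function is determined by its values at three points, so they are equal as rational functions.

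4. I expand the left side, using $\gamma^q=\gamma^{-1}$:
\[\rho^{(q)}(1/x)=\frac{\delta^q+\varepsilon^q x}{1+\gamma^q x}=\frac{\gamma\varepsilon^q x+\gamma\delta^q}{x+\gamma}.\]
Both sides are now normalized with denominator $x+\gamma$, so I can compare numerators. This gives $\delta=\gamma\varepsilon^q$ and $\varepsilon=\gamma\delta^q$; the first relation follows from the second because $\gamma^{q+1}=1$.

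5. Hence $\rho(x)=(\delta x+\gamma\delta^q)/(x+\gamma)$. The degree-$1$ condition $\gamma(\delta-\delta^q)\neq 0$ then forces $\delta\notin\mathbb{F}_q$.

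The only step needing care is step 3, namely justifying that $\rho^{(q)}(1/x)$ agrees with $\rho$ everywhere, including at the pole. Once that identity is in place, the rest is coefficient comparison.
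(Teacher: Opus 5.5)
Your proof is correct. The paper gives no proof of this lemma to compare against: it simply quotes the statement from \cite{zieve2013permutation}.

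The usual route for the necessity direction is a count rather than a symmetry argument:
\begin{itemize}
\item The functions $(\delta x+\gamma\delta^q)/(x+\gamma)$ are pairwise distinct, since the pole $-\gamma$ and the value $\delta=\rho(\infty)$ recover the parameters. So there are $(q+1)(q^2-q)=q^3-q$ of them.
\item If $\rho_0$ is one degree-$1$ map sending $\mu_{q+1}$ onto $\mathbb{P}^1(\mathbb{F}_q)$, every other such map is $\theta\circ\rho_0$, where $\theta=\rho\circ\rho_0^{-1}$ preserves $\mathbb{P}^1(\mathbb{F}_q)$ and hence lies in $PGL_2(\mathbb{F}_q)$. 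That again gives $q^3-q$ maps.
\end{itemize}
The sufficiency computation plus this cardinality match then forces equality.

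Your argument replaces the count with the functional identity $\rho=\rho^{(q)}\circ x^{-1}$. This is the degree-$1$ analogue of the relation $g=x^{-1}\circ g^{(q)}\circ x^{-1}$ that the paper uses in Section~4.3. It needs only three-point rigidity of M\"obius maps, and it derives the normal form by comparing coefficients, rather than verifying a guessed family and matching sizes. The counting route, in exchange, shows that the set of such maps is a single $PGL_2(\mathbb{F}_q)$-coset, which is the viewpoint underlying Lemma~\ref{lem6}.

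The one line you should make explicit is the step you flagged yourself: why $\rho^{(q)}(1/x)$ has its pole at $x=-\gamma$. Frobenius sends the pole $-\gamma$ of $\rho$ to the pole $-\gamma^q=(-\gamma)^{-1}$ of $\rho^{(q)}$. This agreement at the pole is genuinely needed when $q=2$, because then $\mu_3$ has only two non-pole points.
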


The following two lemmas describe the relationship between $\mathbb{F}_{q^2}$, $\mu_{q+1}$, and $\mathbb{P}^{1}(\mathbb{F}_q)$.

\begin{lem}\cite{zieve2013permutation}\label{lem5}
Write $g_0(x)=x^rA(x)^{q-1}$ where $r$ is an integer, $q$ is a prime power, and $A(x)\in \mathbb{F}_{q^2}[x]$ is nonzero. Then $g_0(x)$ maps $\mu_{q+1}$ into $\mu_{q+1}\cup \{0\}$, and if $A(x)$ has no roots in $\mu_{q+1}$, then $g_0(x)$ induces the same function on $\mu_{q+1}$ as does $g(x)=x^sA^{(q)}(1/x)/A(x)$, for any integer $s$ with $r\equiv s\mod (q+1)$. In particular, $g_0(x)$ permutes $\mu_{q+1}$ if and only if $A(x)$ has no roots in $\mu_{q+1}$ and $g(x)$ permutes $\mu_{q+1}$.
\end{lem}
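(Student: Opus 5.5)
The statement to prove is Lemma~\ref{lem5}. The plan is to work directly on $\mu_{q+1}$, using the identity $\zeta^q=\zeta^{-1}$ for $\zeta\in\mu_{q+1}$.

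\textbf{Image of $g_0$.} Let $\zeta\in\mu_{q+1}$.
\begin{itemize}
\item If $A(\zeta)=0$, then $g_0(\zeta)=0$, since $q-1\ge 1$.
\item Otherwise $A(\zeta)\in\mathbb{F}_{q^2}^*$. Raising to the power $q-1$ lands in $\mu_{q+1}$, because $(q-1)(q+1)=q^2-1$. Also $\zeta^r\in\mu_{q+1}$.
\end{itemize}
Hence $g_0(\zeta)\in\mu_{q+1}\cup\{0\}$.

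\textbf{Agreement with $g$.} Now suppose $A$ has no roots in $\mu_{q+1}$, and write $A(x)=\sum\alpha_i x^i$. For $\zeta\in\mu_{q+1}$ we have
\[
A(\zeta)^q=\sum\alpha_i^q\zeta^{qi}=\sum\alpha_i^q\zeta^{-i}=A^{(q)}(1/\zeta).
\]
Therefore
\[
g_0(\zeta)=\zeta^r\,\frac{A(\zeta)^q}{A(\zeta)}=\zeta^r\,\frac{A^{(q)}(1/\zeta)}{A(\zeta)}.
\]
Since $\zeta^{q+1}=1$, we get $\zeta^r=\zeta^s$ whenever $r\equiv s \pmod{q+1}$, so $g_0(\zeta)=g(\zeta)$. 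Here $g(\zeta)$ is evaluated as the quotient of values, which is legitimate because $A(\zeta)\neq0$ and $1/\zeta$ is finite.

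\textbf{The permutation criterion.} If $A$ has a root $\zeta_0\in\mu_{q+1}$, then $g_0(\zeta_0)=0\notin\mu_{q+1}$. So $g_0$ does not map $\mu_{q+1}$ into itself, and in particular does not permute it. If $A$ has no roots in $\mu_{q+1}$, then $g_0$ and $g$ agree pointwise on $\mu_{q+1}$, so one permutes $\mu_{q+1}$ exactly when the other does. This gives the stated equivalence.

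\textbf{Main subtlety.} The only delicate point is interpreting $g$ as a function on $\mu_{q+1}$ when $N=x^sA^{(q)}(1/x)$ and $D=A$ might share factors, or $s$ might be negative. Because we evaluate at points where $A(\zeta)\ne0$ and $\zeta\ne0$, the value of the rational function equals the quotient of the values, and cancelling common factors does not change it. So the argument goes through.
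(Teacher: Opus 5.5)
Your proof is correct and is essentially the standard argument from the cited source; the paper itself quotes this lemma from \cite{zieve2013permutation} without proof. The key identity $A(\zeta)^q=A^{(q)}(1/\zeta)$ for $\zeta\in\mu_{q+1}$, combined with $\zeta^r=\zeta^s$, gives $g_0(\zeta)=g(\zeta)$ whenever $A(\zeta)\neq 0$, and the value $0$ at any root of $A$ in $\mu_{q+1}$ rules out a permutation; your remark that $g(\zeta)$ may be computed as a quotient of values (since $\zeta\neq 0$ and $A(\zeta)\neq 0$) correctly handles possible common factors and negative $s$.
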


\begin{lem}\cite{zieve2013permutation}\label{lem6}
    Let $g(x)\in \mathbb{F}_{q^2}(x)$ be a nonconstant rational function having the form $g(x)=x^sA^{(q)}(1/x)/A(x)$ where $s$ is an integer, $q$ is a prime power, and $A(x)\in \mathbb{F}_{q^2}[x]$. Let $h(x):=\rho\circ g\circ\sigma^{-1}$ where $\rho,\sigma\in \mathbb{F}_{q^2}(x)$ are degree-$1$ rational functions that map $\mu_{q+1}$ to $\mathbb{P}^{1}(\mathbb{F}_q)$. Then $h(x)$ is in $\mathbb{F}_q(x)$, and $h(x)$ permutes $\mathbb{P}^{1}(\mathbb{F}_q)$ if and only if $g(x)$ permutes $\mu_{q+1}$.
\end{lem}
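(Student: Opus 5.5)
The plan is to derive everything from one functional identity satisfied by $g$, namely $g^{(q)}\circ\tau=\tau\circ g$ with $\tau(x)=1/x$, together with the matching identities satisfied by $\rho$ and $\sigma$.

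First, the identity for $g$. Applying the Frobenius to the coefficients of $g(x)=x^sA^{(q)}(1/x)/A(x)$ gives $g^{(q)}(x)=x^sA(1/x)/A^{(q)}(x)$, since $(A^{(q)})^{(q)}=A$. Substituting $1/x$ then gives
\[
g^{(q)}(1/x)=\frac{x^{-s}A(x)}{A^{(q)}(1/x)}=\frac{1}{g(x)},
\]
which is exactly $g^{(q)}\circ\tau=\tau\circ g$. Two consequences follow.
\begin{itemize}
\item[(i)] $g$ maps $\mu_{q+1}$ into $\mu_{q+1}$. For $x\in\mu_{q+1}$ we have $x^q=1/x$, so $g(x)^q=g^{(q)}(x^q)=g^{(q)}(1/x)=1/g(x)$ as elements of $\mathbb{P}^1$. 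The values $0$ and $\infty$ are impossible, because $0^q=0\ne 1/0$ and $\infty^q=\infty\ne 1/\infty$. Hence $g(x)^{q+1}=1$.
\item[(ii)] $\tau\circ g^{(q)}\circ\tau=g$, because $\tau$ is an involution.
\end{itemize}

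Next, the identities for $\rho$ and $\sigma$. By Lemma \ref{lem4} we may write $\rho=(\delta x+\gamma\delta^q)/(x+\gamma)$ with $\gamma\in\mu_{q+1}$. A direct computation gives
\[
\rho^{(q)}(1/x)=\frac{\delta^q+\gamma^q\delta x}{1+\gamma^q x}.
\]
Multiplying numerator and denominator by $\gamma$ and using $\gamma^{q+1}=1$ turns this into $(\gamma\delta^q+\delta x)/(\gamma+x)=\rho(x)$. So $\rho^{(q)}\circ\tau=\rho$, i.e. $\rho^{(q)}=\rho\circ\tau$. The same argument gives $\sigma^{(q)}=\sigma\circ\tau$. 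Since conjugation commutes with inversion for degree-$1$ maps, it follows that $(\sigma^{-1})^{(q)}=(\sigma^{(q)})^{-1}=\tau\circ\sigma^{-1}$.

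Now use Lemma \ref{lem2}(2), which says $(\cdot)^{(q)}$ respects composition:
\[
h^{(q)}=\rho^{(q)}\circ g^{(q)}\circ(\sigma^{-1})^{(q)}=\rho\circ(\tau\circ g^{(q)}\circ\tau)\circ\sigma^{-1}=\rho\circ g\circ\sigma^{-1}=h,
\]
where the third equality is (ii). To conclude $h\in\mathbb{F}_q(x)$, write $h=N/D$ in lowest terms with a fixed coefficient of $D$ normalized to $1$. Then $h^{(q)}=N^{(q)}/D^{(q)}$ is also in lowest terms with the same normalization, so uniqueness of this representation forces $N=N^{(q)}$ and $D=D^{(q)}$. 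Hence all coefficients lie in $\mathbb{F}_q$.

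For the permutation statement, $\rho$ and $\sigma$ are injective on $\mathbb{P}^1(\mathbb{F}_{q^2})$ and map $\mu_{q+1}$ into $\mathbb{P}^1(\mathbb{F}_q)$. Both sets have $q+1$ elements, so $\rho$ and $\sigma$ restrict to bijections $\mu_{q+1}\to\mathbb{P}^1(\mathbb{F}_q)$, and $\sigma^{-1}$ is a bijection $\mathbb{P}^1(\mathbb{F}_q)\to\mu_{q+1}$. By (i), $h|_{\mathbb{P}^1(\mathbb{F}_q)}=\rho|_{\mu_{q+1}}\circ g|_{\mu_{q+1}}\circ\sigma^{-1}|_{\mathbb{P}^1(\mathbb{F}_q)}$. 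This is a composition of maps between sets of equal size in which the outer two are bijections, so $h$ permutes $\mathbb{P}^1(\mathbb{F}_q)$ if and only if $g$ permutes $\mu_{q+1}$.

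The step most likely to need care is the bookkeeping of $\infty$ and of $\mathbb{P}^1$-valued evaluation in (i), together with the normalization argument that turns $h^{(q)}=h$ into "coefficients in $\mathbb{F}_q$". Both are routine once stated carefully.
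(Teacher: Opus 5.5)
The paper gives no proof of this lemma; it quotes it from \cite{zieve2013permutation}, so there is no in-paper argument to compare against. Your proof is correct and complete. It follows the natural Frobenius-twist route. With $\tau(x)=1/x$, the identities $g^{(q)}=\tau\circ g\circ\tau$, $\rho^{(q)}=\rho\circ\tau$ and $\sigma^{(q)}=\sigma\circ\tau$ give $h^{(q)}=h$, hence $h\in\mathbb{F}_q(x)$. Your point (i), that $g(\mu_{q+1})\subseteq\mu_{q+1}$, together with the bijectivity of $\rho,\sigma$ from $\mu_{q+1}$ onto $\mathbb{P}^1(\mathbb{F}_q)$, then gives the permutation equivalence.

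One minor simplification: you do not need the explicit shape from Lemma~\ref{lem4}. For $x\in\mu_{q+1}$ one has
\[
\rho(x)=\rho(x)^q=\rho^{(q)}(x^q)=\rho^{(q)}(1/x),
\]
and two degree-$1$ functions that agree on the $q+1\ge 3$ points of $\mu_{q+1}$ coincide. So $\rho^{(q)}\circ\tau=\rho$ follows directly, and likewise for $\sigma$.
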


 Here we introduce the concepts of branch points and ramification, which are key tools to describe the geometric properties of a rational function.

 For any nonconstant $g(x)\in \bar{\mathbb{F}}_q(x)$, and any $\alpha\in \mathbb{P}^{1}(\bar{\mathbb{F}}_q)$, the ramification index $e_g(\alpha)$ is the multiplicity of $\alpha$ as a $g$-preimage of $g(\alpha)$. Explicitly, for any degree-$1$, $\rho,\sigma\in \bar{\mathbb{F}}_q(x)$ such that $\sigma(0)=\alpha$, and $\rho(g(\alpha))=0$, the positive integer $e_g(\alpha)$ is the degree of the lowest-degree term of the numerator of $\rho\circ g\circ \sigma$. 

 For $\beta\in \mathbb{P}^{1}(\bar{\mathbb{F}}_q)$, we define the ramification multiset of $g(x)$ over $\beta$ to be the multiset $E_g(\beta)$ consisting of the ramification indices $e_g(\alpha)$ with $\alpha\in g^{-1}(\beta)$. It can be seen that $E_g(\beta)$ is a collection of positive integers whose sum is $\text{deg}(g)$. If $g(x)$ is constant, then $E_g(\beta)$ is the empty multiset.

 We say that $\alpha\in \mathbb{P}^{1}(\bar{\mathbb{F}}_q)$ is a ramification point of $g(x)$ if $e_g(\alpha)>1$. We say that $\beta\in \mathbb{P}^{1}(\bar{\mathbb{F}}_q)$ is a branch point of $g(x)$ if $\beta=g(\alpha)$ for some ramification point $\alpha$ of $g(x)$. 

The following lemma provides a classification of separable permutation rational functions of degree $3$.

\begin{lem}\cite{ding2020low}\label{lem7}
    A separable degree $3$ rational function $g(x)\in \mathbb{F}_q(x)$ permutes $\mathbb{P}^{1}(\mathbb{F}_q)$ if and only if one of the following holds:

    (1) $q\equiv 2 \mod 3$ and $g(x)$ is linearly equivalent to $x^3$;

    (2) $q\equiv 1\mod 3$ and $g(x)=\rho\circ x^3\circ \sigma^{-1}$ for some degree-$1$ $\rho,\sigma\in \mathbb{F}_{q^2}(x)$ which map $\mu_{q+1}$ to $\mathbb{P}^{1}(\mathbb{F}_q)$;

    (3) $q\equiv 0\mod 3$ and $g(x)$ is linearly equivalent to $x^3-\alpha x$ for some nonsquare $\alpha\in \mathbb{F}_q^{*}$.
\end{lem}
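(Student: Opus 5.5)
The plan is to reduce everything to the geometry of the curve $C:\ \frac{g(x)-g(y)}{x-y}=0$, viewed as a plane curve of bidegree $(2,2)$ (after clearing denominators). Write $g=N/D$ with $\gcd(N,D)=1$ and $\deg g=3$. Then $g$ permutes $\mathbb{P}^1(\mathbb{F}_q)$ if and only if $g$ is injective on $\mathbb{P}^1(\mathbb{F}_q)$. This holds exactly when $C$ has no $\mathbb{F}_q$-rational points $(x,y)$ with $x\neq y$, apart from the finitely many points on the diagonal. So the strategy is: first classify $g$ up to linear equivalence over $\bar{\mathbb{F}}_q$ using ramification data, then descend to $\mathbb{F}_q$, and finally decide rationality of points on $C$ in each case.

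\textbf{Step 1 (geometric classification).} Since $g$ is separable, Riemann--Hurwitz on $\mathbb{P}^1$ gives $-2=3(-2)+\deg R$, so the different has degree $4$.
\begin{itemize}
\item If $p\neq 3$, all ramification is tame, so $\sum_{\alpha}(e_g(\alpha)-1)=4$ with each $e_g(\alpha)\in\{2,3\}$. There are two possibilities:
  \begin{itemize}
  \item two total ramification points, in which case $g$ is $\bar{\mathbb{F}}_q$-linearly equivalent to $x^3$;
  \item at least one simple ramification point, in which case $C$ is absolutely irreducible (the geometric monodromy group is $S_3$).
  \end{itemize}
\item If $p=3$, an index-$3$ point is wild and contributes at least $3$. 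A direct normal-form computation then shows that $g$ is either equivalent to $x^3-\alpha x$ with $\alpha\neq 0$ (a single wild point at $\infty$), or again has geometric monodromy $S_3$.
\end{itemize}

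\textbf{Step 2 (descent and the permutation cases).} Suppose $g\sim x^3$ over $\bar{\mathbb{F}}_q$. The set $\{\alpha_1,\alpha_2\}$ of total ramification points is Frobenius-stable, and so is its image set of branch points.
\begin{itemize}
\item If both points are $\mathbb{F}_q$-rational, move them to $0,\infty$ by $\mathbb{F}_q$-Möbius maps. This makes $g$ linearly equivalent over $\mathbb{F}_q$ to $\lambda x^3$, which is equivalent to $x^3$. This permutes $\mathbb{P}^1(\mathbb{F}_q)$ iff $\gcd(3,q-1)=1$, i.e. $q\equiv 2\pmod 3$.
\item If the two points are conjugate over $\mathbb{F}_{q^2}$, choose $\sigma$ sending $0,\infty$ to $\alpha_1,\alpha_1^q$ and $\rho$ likewise on the branch points. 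By Lemma \ref{lem4} these can be taken to map $\mu_{q+1}$ onto $\mathbb{P}^1(\mathbb{F}_q)$. Then $\rho^{-1}\circ g\circ\sigma$ is a scalar multiple of $x^3$ preserving $\mu_{q+1}$. By Lemma \ref{lem6} it permutes $\mu_{q+1}$ iff $\gcd(3,q+1)=1$, i.e. $q\equiv 1\pmod 3$. Absorbing the scalar into $\rho$ gives form (2).
\end{itemize}
Note also that in case (2) with $q\equiv 2\pmod 3$ the map is not bijective, which is why the congruence conditions separate cleanly.

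For $p=3$, $x^3-\alpha x$ is $\mathbb{F}_q$-linear. It is injective iff $x^2=\alpha$ has no root in $\mathbb{F}_q^{*}$, i.e. iff $\alpha$ is a nonsquare. One also has to check that $\alpha\in\mathbb{F}_q$ can be arranged, by descending the unique wild point, which is necessarily rational.

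\textbf{Step 3 (non-permutation in the monodromy-$S_3$ case).} This is the main obstacle. Here $C$ is absolutely irreducible of arithmetic genus at most $1$ as a $(2,2)$ curve. The Weil bound then gives $\#C(\mathbb{F}_q)\geq q+1-2\sqrt q$. Only $O(1)$ of these points lie on the diagonal or at poles, and I would bound that number explicitly by $4$ using the ramification points. So for $q$ beyond a small explicit bound (around $q\le 16$), $C$ has an off-diagonal rational point and $g$ is not a permutation.

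The remaining small $q$ would be handled either by a sharper count or by a direct finite check. The sharper count would use that genus $0$ gives exactly $q+1$ points. Similarly, in the $p=3$ sub-case one must verify that the non-$x^3-\alpha x$ types yield an absolutely irreducible $C$, which requires care.
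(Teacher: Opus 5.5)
The paper only quotes this lemma from \cite{ding2020low} and gives no proof of its own, so I judge your argument on its own terms. Steps 1--2 have the right structure, but Step 3 has a genuine gap. The statement is claimed for every prime power $q$, while your Weil-bound count only works for large $q$. In genus $1$ it needs $q+1-2\sqrt q>4$, i.e.\ $q>9$, and the remaining $q$ are deferred to an unspecified finite check.

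The missing idea is that rational points of $\tilde C$ over the diagonal are usually not harmless. If $P\in\tilde C(\mathbb{F}_q)$ lies over $(x_0,x_0)$, then $x_0\in\mathbb{P}^1(\mathbb{F}_q)$ is a ramification point of $g$. If $E_g(g(x_0))=[2,1]$, the unramified preimage $x_1\neq x_0$ of $g(x_0)$ is Frobenius-stable, hence rational, so $g$ is already not injective. In the $S_3$ case there is at most one point of index $3$: two tame ones use up all of Riemann--Hurwitz and force monodromy $C_3$, and a wild one in characteristic $3$ costs at least $3$. Since $x\colon\tilde C\to\mathbb{P}^1$ has degree $2$, at most two points of $\tilde C$ lie over that point. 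This gives two cases.
\begin{itemize}
\item If $g$ has no index-$3$ point, every rational point of $\tilde C$ witnesses non-injectivity. Hasse--Weil gives $\#\tilde C(\mathbb{F}_q)\ge(\sqrt q-1)^2>0$ for every $q$, so $g$ is not a permutation.
\item If $g$ has an index-$3$ point, then $\tilde C$ has genus $0$, hence $q+1>2$ rational points, so again $g$ is not a permutation.
\end{itemize}
To verify the genus-$0$ claim in odd characteristic, note that $\tilde C\to\mathbb{P}^1_x$ is a tame double cover. It is ramified exactly above the unramified partners of the $[2,1]$ points. In characteristic $3$ it is also ramified above the index-$3$ point, whose inertia group must be all of $S_3$ by parity of the branch locus of the quadratic resolvent. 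That gives two ramified points in each case, hence genus $0$. In characteristic $2$, normalize over $\bar{\mathbb{F}}_q$ to $g=x^3+ax^2+bx$ with $a^2\neq b$; then $C$ is the smooth conic $x^2+xy+y^2+a(x+y)+b=0$. This handles all $q$ uniformly, with no computer search.

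Two smaller points concern Step 1.
\begin{itemize}
\item For $p=2$ it is false that all ramification is tame: index-$2$ points are wild, with different exponent at least $2$. The dichotomy you need still holds, but for a different reason. A point with $E_g=[2,1]$ puts a transposition into the geometric monodromy group, which forces $S_3$. With no such point, only tame index-$3$ points remain, and Riemann--Hurwitz forces exactly two of them.
\item The absolute irreducibility of $C$ in the $p=3$ case that is not equivalent to $x^3-\alpha x$ needs no extra care. The only transitive subgroups of $S_3$ are $C_3$ and $S_3$, and $S_3$ is doubly transitive. Double transitivity is exactly absolute irreducibility of $(g(x)-g(y))/(x-y)$.
\end{itemize}
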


This lemma yields the following corollary:

\begin{cor}\cite{ding2023determination}
    Every degree-$3$ permutation rational function $h(x)\in \mathbb{F}_q(x)$ has ramification multiset $[3]$ over each of its branch point.
\end{cor}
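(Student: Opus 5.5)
We derive the corollary from the classification in Lemma~\ref{lem7}, using the fact that ramification multisets are invariant under composition with degree-one maps on either side. Let $h(x)\in\mathbb{F}_q(x)$ be a degree-3 permutation rational function.

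\textbf{Inseparable case.} If $h$ is not separable, then $h\in\mathbb{F}_q(x^p)$. Since $\deg h=3$, this forces $p=3$ and $h=\rho(x^3)$ with $\rho$ of degree $1$. Over $\bar{\mathbb{F}}_q$, every point then has a single preimage with ramification index $3$, so every point is a branch point with multiset $[3]$.

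\textbf{Separable case.} By Lemma~\ref{lem7}, over $\mathbb{F}_{q^2}$ (or $\mathbb{F}_q$) we may write $h=\rho\circ f\circ\sigma^{-1}$ with $\rho,\sigma$ of degree $1$ and $f$ equal to $x^3$ or $x^3-\alpha x$ with $\alpha$ a nonsquare (the latter when $3\mid q$). Degree-one maps are bijections of $\mathbb{P}^1(\bar{\mathbb{F}}_q)$ and preserve ramification indices. Explicitly, $e_h(\sigma(\gamma))=e_f(\gamma)$, so $E_h(\rho(\beta))=E_f(\beta)$. It therefore suffices to check $f$.

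\textbf{The function $f=x^3$ with $p\ne3$.} The derivative is $3x^2$, so the only finite ramification point is $0$, and $e(0)=3$. At $\infty$ we also have $e=3$. Thus the branch points are $0$ and $\infty$, each with multiset $[3]$.

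\textbf{The function $f=x^3-\alpha x$ with $p=3$.} Here $f'=-\alpha\ne0$, so $f$ is unramified at every finite point. The point $\infty$ is the only preimage of $\infty$ because $f$ is a polynomial, so $e_f(\infty)=3$. The only branch point is $\infty$, with multiset $[3]$.

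In every case each branch point has ramification multiset $[3]$. The only real obstacle is checking that ramification indices transport correctly under linear equivalence. This is immediate from the definition of $e_g$ via the lowest-degree term of $\rho\circ g\circ\sigma$, since that definition already absorbs pre- and post-composition by degree-one maps.
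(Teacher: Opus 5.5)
Your proof is correct and follows the paper's route. The paper simply asserts that the corollary follows from the classification in Lemma~\ref{lem7}, and you supply the verification it omits: ramification multisets are transported by linear equivalence, and the normal forms $x^3$ (for $p\neq 3$) and $x^3-\alpha x$ (for $p=3$) have multiset $[3]$ over each branch point. Your separate treatment of the inseparable case $h=\rho(x^3)$ in characteristic $3$ is a worthwhile addition, since Lemma~\ref{lem7} covers only separable functions and the paper's one-line derivation leaves that case implicit.
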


\section{The degree of $g(x)$}
From the introduction, Corollary \ref{cor1}, and Lemma \ref{lem5}, when $f(x)=ax^3+bx^2+cx+d$, where $a,b,c,d\in \mathbb{F}_{q^2}$ and $r\equiv 3 \mod (q+1)$, to prove $x^rf(x^{q-1})$ is a permutation polynomial over $\mathbb{F}_{q^2}$, it is sufficient to show  $g(x)=\frac{N(x)}{D(x)}=\frac{d^qx^3+c^qx^2+b^qx+a^q}{ax^3+bx^2+cx+d}$ permutes $\mu_{q+1}$ and that the denominator $ax^3+bx^2+cx+d$ has no roots in $\mu_{q+1}$.  Before proving when $g(x)$ permutes $\mu_{q+1}$, in this section, we first characterize, case by case, when the degree of $g(x)$ is $0,1,2,$ or $3$. We denote by $H(x)$ the greatest common divisor of $N(x)$ and $D(x)$.

\begin{case}[$d=0$]
In this case, $D(x)=ax^3+bx^2+cx$ and $N(x)=c^qx^2+b^qx+a^q$. If $c=0$, then $N(x)=b^qx+a^q$ and $D(x)=ax^3+bx^2$. It is easy to see that when $b=0$, the degree of $g(x)$ is $3$, provided that $a\neq 0$. Next, we consider $b\neq 0$. It is easy to know when $\frac{a^q}{b^q}\in \mu_{q+1}$, $\text{deg}(H)=1$. Otherwise, $\text{deg}(g)=3$ if $a\neq 0$; $\text{deg}(g)=1$ if $a=0$. If $c\neq 0$ and $b=0$, then $D(x)=ax^3+cx$ and $N(x)=c^qx^2+a^q$. In this case, $H(x)|(c^{q+1}-a^{q+1})$. If $c^{q+1}\neq a^{q+1}$, then $\text{deg}(g)=3$ if $a\neq 0$; $\text{deg}(g)=1$ if $a=0$. If $c^{q+1}=a^{q+1}$, then $H(x)=ax^2+c$. The degree of $g$ is 1. If $c\neq 0$ and $b\neq 0$, then $D(x)=ax^3+bx^2+cx$ and $N(x)=c^qx^2+b^qx+a^q$. In this case, $H(x)|(bc^q-ab^q)x+(c^{q+1}-a^{q+1})$. If $bc^q=ab^q$ and $c^{q+1}=a^{q+1}$, then $H(x)=c^qx^2+b^qx+a^q$ and $\text{deg}(g)=1$. If $bc^q=ab^q$ and $c^{q+1}\neq a^{q+1}$, then $H(x)=1$ and $\text{deg}(g)=3$. If $bc^q\neq ab^{q}$ and $\frac{a^{q+1}-c^{q+1}}{bc^q-ab^q}\in \mu_{q+1}$, then $H(x)=(bc^q-ab^q)x+(c^{q+1}-a^{q+1})$, and $\text{deg}(g)=2$ if $a\neq 0$, and $\text{deg}(g)=0$ if $a=0$. Otherwise, $H(x)=1$ and $\text{deg}(g)=3$ if $a\neq 0$; $\text{deg}(g)=1$ if $a=0$.
\end{case}

Next we consider $d\neq 0$. In this case, $D(x)=ax^3+bx^2+cx+d$ and $N(x)=d^qx^3+c^qx^2+b^qx+a^q$. It is easy to know $H(x)|d^qD(x)-aN(x)=(d^qb-ac^q)x^2+(d^qc-ab^q)x+(d^{q+1}-a^{q+1})$.

\begin{case}[$d\neq 0$, $d^qb=ac^q$]
    In this case, $H(x)|(d^qc-ab^q)x+(d^{q+1}-a^{q+1})$. If $d^qc=ab^q$ and $d^{q+1}=a^{q+1}$, then $H(x)=ax^3+bx^2+cx+d$, which means $g(x)$ is constant on $\mu_{q+1}$. If $d^qc=ab^q$ and $d^{q+1}\neq a^{q+1}$, then $H(x)=1$ and $\text{deg}(g)=3$. If $d^qc\neq ab^q$ and $d^{q+1}=a^{q+1}$, then $H(x)=1$ and $\text{deg}(g)=3$. We have a lemma to deal with the case $d^qc\neq ab^q$ and $d^{q+1}\neq a^{q+1}$.

    \begin{lem}\label{lem8}
        If $d\neq 0$, $d^qb=ac^q$, $d^qc\neq ab^q$, and $d^{q+1}\neq a^{q+1}$, then $\text{deg}(H)=1$ if and only if $(d^{q+1}-a^{q+1})^{q+1}=(d^qc-ab^q)(dc^q-a^qb)$.
    \end{lem}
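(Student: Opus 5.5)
The plan is to reduce everything to the linear polynomial that $H(x)$ is forced to divide, and to use the self-conjugate-reciprocal symmetry between $N$ and $D$. Under the hypothesis $d^qb=ac^q$ we have
\[
d^qD(x)-aN(x)=L(x):=(d^qc-ab^q)x+(d^{q+1}-a^{q+1}),
\]
and $H(x)\mid L(x)$. Both coefficients of $L$ are nonzero by hypothesis. So $\deg(H)\le 1$, and $\deg(H)=1$ exactly when the unique root
\[
r=-\frac{d^{q+1}-a^{q+1}}{d^qc-ab^q}
\]
of $L$ is a common root of $N$ and $D$; note $r\neq 0$. I will first rewrite the stated condition as $r\in\mu_{q+1}$. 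Since $d^{q+1}-a^{q+1}\in\mathbb{F}_q$, its $(q+1)$-th power equals its square, and $(d^qc-ab^q)^q=dc^q-a^qb$. Hence
\[
r^{q+1}=\frac{(d^{q+1}-a^{q+1})^{2}}{(d^qc-ab^q)(dc^q-a^qb)},
\]
so the condition in the lemma is equivalent to $r^{q+1}=1$.

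\emph{Forward direction.} Suppose $\deg(H)=1$, so $r$ is a common root of $N$ and $D$. A direct check gives $N=\hat D$ and $D=\hat N$, both of degree $3$ because $d\neq 0$. By Lemma \ref{lem2}(3), $D(r)=0$ gives $N(r^{-q})=0$, and $N(r)=0$ gives $D(r^{-q})=0$. So $r^{-q}$ is also a common root, hence a root of $H$. Since $H$ is linear, $r^{-q}=r$, i.e.\ $r\in\mu_{q+1}$.

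\emph{Converse.} Assume $r\in\mu_{q+1}$. For $x\in\mu_{q+1}$ we have $x^{-1}=x^q$, so $N(r)=r^3D^{(q)}(1/r)=r^3D(r)^q$. Evaluating the identity $d^qD-aN=L$ at $r$ gives
\[
d^qD(r)=a\,r^3D(r)^q .
\]
If $a=0$, this gives $D(r)=0$ at once, since $d\neq 0$. If $a\neq 0$ and $D(r)\neq 0$, then $D(r)^{q-1}=d^q/(ar^3)$. Raising to the power $q+1$ and using $D(r)^{q^2-1}=1$ and $r^{q+1}=1$ yields $d^{q+1}=a^{q+1}$, contradicting the hypothesis. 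So $D(r)=0$, and then $N(r)=r^3D(r)^q=0$ as well. Thus $x-r$ divides $H$, and since $\deg(H)\le 1$ we get $\deg(H)=1$.

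The main obstacle is the converse step, showing that $r\in\mu_{q+1}$ forces $D(r)=0$. The norm argument above handles it, and the hypothesis $d^{q+1}\neq a^{q+1}$ is used exactly there.
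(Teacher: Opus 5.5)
Your proof is correct. The reduction to the linear polynomial $L$ and the forward direction match the paper. The paper dismisses necessity as trivial, and your $\alpha\mapsto\alpha^{-q}$ argument via Lemma~\ref{lem2}(3) supplies exactly the reasoning it leaves implicit.

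The converse takes a genuinely different route. The paper substitutes $r$ into $N$, clears denominators, and verifies $N(r)=0$ by explicit algebra. It uses $(d^{q+1}-a^{q+1})^2=(d^qc-ab^q)(dc^q-a^qb)$, the hypothesis $d^qb=ac^q$, and its Frobenius $db^q=a^qc$ to make all terms cancel. Only then does it obtain $D(r)=0$ from Lemma~\ref{lem2} and $r^{-q}=r$. You instead evaluate $d^qD-aN=L$ at $r$, rewrite $N(r)=r^3D(r)^q$ on $\mu_{q+1}$, and take norms to force $D(r)=0$.

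Your route is shorter and explains why the lemma holds. Any root in $\mu_{q+1}$ of $d^qD-aN$ is automatically a common root of $N$ and $D$ once $d^{q+1}\neq a^{q+1}$. This is the same determinant phenomenon as in the paper's Lemma~\ref{lem11}, where $m^{q+1}-n^{q+1}$ becomes $d^{q+1}-a^{q+1}$ here. It also avoids an error-prone expansion. The paper's computation, by contrast, never uses $d^{q+1}\neq a^{q+1}$ as a separate input, since that follows from the condition together with $d^qc\neq ab^q$. Instead it relies on the specific shape $d^qb=ac^q$ to produce the cancellation.

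One small imprecision: $N=\hat D$ holds only when $a\neq 0$. The hypotheses allow $a=b=0$ with $c,d\neq 0$, in which case $N=x^{3-\deg D}\hat D$. This is harmless, because $r\neq 0$ and all you actually use is the identity $N(x)=x^3D^{(q)}(1/x)$.
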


    \begin{proof}
        The necessity is trivial. Since $\text{deg}(H)=1$ and $H(x)|(d^qc-ab^q)x+(d^{q+1}-a^{q+1})$, it implies $(d^{q+1}-a^{q+1})^{q+1}=(d^qc-ab^q)(dc^q-a^qb)$.

        Now assume $(d^{q+1}-a^{q+1})^{q+1}=(d^qc-ab^q)(dc^q-a^qb)$, which means the root of $(d^qc-ab^q)x+(d^{q+1}-a^{q+1})$ is in $\mu_{q+1}$. Plugging it into $N(x)$, we have
        \begin{eqnarray}\label{1}
            d^q\left(\frac{a^{q+1}-d^{q+1}}{d^qc-ab^q}\right)^3+c^q\left(\frac{a^{q+1}-d^{q+1}}{d^qc-ab^q}\right)^2+b^q\left(\frac{a^{q+1}-d^{q+1}}{d^qc-ab^q}\right)+a^q.
        \end{eqnarray}

        Simplify it, we have 
        \begin{dmath}\label{2}
            d^q(a^{q+1}-d^{q+1})^3+c^q(a^{q+1}-d^{q+1})^2(d^qc-ab^q)+b^q(a^{q+1}-d^{q+1})(d^qc-ab^q)^2+a^q(d^qc-ab^q)^3.
        \end{dmath}
         Note that $(d^{q+1}-a^{q+1})^{q+1}=(d^{q+1}-a^{q+1})^2$. Using $(a^{q+1}-d^{q+1})^2=(d^qc-ab^q)$$(dc^q-a^qb)$, we have
        \begin{dmath}\label{3}
            d^q(a^{q+1}-d^{q+1})(d^qc-ab^q)(dc^q-a^qb)+c^q(d^qc-ab^q)^2(dc^q-a^qb)+b^q(a^{q+1}-d^{q+1})(d^qc-ab^q)^2+a^q(d^qc-ab^q)^3.
        \end{dmath}
        Since $d^qc\neq ab^q$, divide it by $(d^qc-ab^q)$, we have
        \begin{dmath}\label{4}
             d^q(a^{q+1}-d^{q+1})(dc^q-a^qb)+c^q(d^qc-ab^q)(dc^q-a^qb)+b^q(a^{q+1}-d^{q+1})(d^qc-ab^q)+a^q(d^qc-ab^q)^2.
        \end{dmath}
        Using $d^qb=ac^q$, we have 
        \begin{eqnarray}\label{5}
            -c^q(a^{q+1}-d^{q+1})^2+(d^qc-ab^q)(-b^qd^{q+1}+dc^{2q}-a^qc^qb+a^qd^qc).
        \end{eqnarray}

        Note that $(d^qc-ab^q)(dc^{2q}-a^qc^qb)=c^q(a^{q+1}-d^{q+1})^2$, Equation (\ref{5}) is transformed into
        \begin{eqnarray}\label{6}
            (d^qc-ab^q)(-b^qd^{q+1}+a^qd^qc)=(d^qc-ab^q)(d^q(-b^qd+a^qc))=0.
        \end{eqnarray}

        Equation (\ref{6}) shows $\frac{a^{q+1}-d^{q+1}}{d^qc-ab^q}$ is a root of $N(x)$. From Lemma \ref{lem2}, if $\alpha$ is a root of $N(x)$, then $\alpha^{-q}$ is a root of $D(x)$. Since $\alpha=\frac{a^{q+1}-d^{q+1}}{d^qc-ab^q}\in \mu_{q+1}$, we have $\alpha^{-q}=\alpha$. Therefore, $\alpha$ is also a root of $D(x)$, meaning that $\alpha$ is a common root of $N(x)$ and $D(x)$. Hence $\text{deg}(H)=1$.
    \end{proof}

    Using Lemma \ref{lem8}, we know that if $(d^{q+1}-a^{q+1})^{q+1}=(d^qc-ab^q)(dc^q-a^qb)$, then $\text{deg}(H)=1$; otherwise, $H(x)=1$.
\end{case}

\begin{case}[$d\neq 0$, $d^qb\neq ac^q$]
    In this case, $\text{deg}(H)$ can be $0,1,$ or $2$. We present the following lemma to characterize when $\text{deg}(H)=2$.

    \begin{lem}\label{lem9}
        Under the conditions $d\neq 0$ and $d^qb\neq ac^q$, $\text{deg}(H)=2$ if and only if one of the following statements holds:

        (i) $d^qc\neq ab^q$, $\left(\frac{d^qc-ab^q}{d^qb-ac^q}\right)^q\frac{d^{q+1}-a^{q+1}}{d^qb-ac^q}=\frac{d^qc-ab^q}{d^qb-ac^q}$;

        (ii) $d^qc=ab^q$, $\frac{d^{q+1}-a^{q+1}}{d^qb-ac^q}=\frac{b^q}{d^q}\in \mu_{q+1}$.

    \end{lem}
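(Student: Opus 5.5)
The plan is to show that, under the hypotheses of Lemma \ref{lem9}, the condition $\deg(H)=2$ is equivalent to the quadratic
\[Q(x):=d^qD(x)-aN(x)=Ax^2+Bx+C,\qquad A=d^qb-ac^q,\; B=d^qc-ab^q,\; C=d^{q+1}-a^{q+1},\]
being self-conjugate reciprocal. Then I would translate the SCR condition into coefficient identities. Since $A\neq 0$, $Q$ has degree exactly $2$. Since $H\mid Q$, we have $\deg(H)=2$ if and only if $H$ is a scalar multiple of $Q$, i.e.\ if and only if $Q\mid N$ and $Q\mid D$.

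\textbf{Step 1: a second element of the ideal.} Because $d\neq 0$, $D$ has constant term $d\neq 0$ and $N(x)=x^3D^{(q)}(1/x)$. Applying $x^3(\cdot)^{(q)}(1/x)$ to $Q=d^qD-aN$ gives
\[x^3Q^{(q)}(1/x)=dN(x)-a^qD(x).\]
Hence $H$ also divides $dN-a^qD$, which equals $x\hat{Q}(x)$ with $\hat Q(x)=C^qx^2+B^qx+A^q$.

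\textbf{Step 2: necessity.} Suppose $Q\mid N,D$. First I rule out $C=0$. If $C=0$, then $x\mid Q$, so $x\mid D$, contradicting $D(0)=d\neq0$. So $Q(0)\neq 0$, and therefore $\gcd(Q,x)=1$. Then $Q\mid x\hat Q$ forces $Q\mid \hat Q$. Both have degree $2$ (as $C\neq0$), so $\hat Q=\lambda Q$, i.e.\ $Q$ is SCR.

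\textbf{Step 2: sufficiency.} Conversely, suppose $\hat Q=\lambda Q$ with $C\neq 0$. The linear system
\[Q=d^qD-aN,\qquad x\hat Q=dN-a^qD\]
in the unknowns $D,N$ has determinant $\pm(d^{q+1}-a^{q+1})=\pm C\neq 0$. Solving it expresses $D$ and $N$ as linear combinations of $Q$ and $x\hat Q=\lambda xQ$, so $Q$ divides both. I also need to check that the conditions in (i) and (ii) force $C\neq 0$.

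\textbf{Step 3: coefficient conditions.} The SCR condition reads $C^q=\lambda A$, $B^q=\lambda B$, $A^q=\lambda C$.

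If $B\neq 0$, then $\lambda=B^{q-1}$. The condition becomes $A^q B=B^qC$, whose $q$-th power is the remaining equation $C^qB=B^qA$. Rewriting $A^qB=B^qC$ gives exactly $(B/A)^q(C/A)=B/A$, which is (i). Also, if $C=0$ this forces $BA^q=0$, which is impossible, so $C\neq 0$ automatically.

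If $B=0$, then $\lambda=A^q/C$ is free, and the conditions collapse to $(C/A)^{q+1}=1$, i.e.\ $C/A\in\mu_{q+1}$.

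\textbf{Main obstacle.} I expect the hardest step to be matching this last condition with the form stated in (ii), namely $C/A=b^q/d^q$. For this I would use the relation $d^qc=ab^q$ to show that $C/A\in\mu_{q+1}$ forces $C/A=b^q/d^q$, and conversely. The natural route is to compare the vanishing of $N$ and $D$ at the roots of $Q$ directly, as was done in the proof of Lemma \ref{lem8}, substituting $ab^q=d^qc$ to simplify. If that identification fails in general, I would record the condition simply as $C/A\in\mu_{q+1}$.
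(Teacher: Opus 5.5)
\textbf{The gap: the identification in case (ii) is left open.} Your argument is sound, but it stops one step short of the statement. For $B=0$ you prove $\deg(H)=2 \iff C/A\in\mu_{q+1}$. You then leave open whether this matches condition (ii), and even hedge that it might fail, so as written the proposal does not establish the lemma in that case.

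The identification does hold, and it takes one line. Conjugating $d^qc=ab^q$ gives $dc^q=a^qb$, so $ac^q=a^{q+1}b/d$. Hence $A=d^qb-ac^q=\tfrac{b}{d}(d^{q+1}-a^{q+1})=\tfrac{bC}{d}$. Since $A\neq 0$, this forces $b\neq0$, $C\neq0$ and $C/A=d/b$. For $u\in\mathbb{F}_{q^2}^*$ one has $u\in\mu_{q+1}$ iff $u=u^{-q}$, so $d/b\in\mu_{q+1}$ iff $d/b=b^q/d^q$. Thus ``$B=0$ and $C/A\in\mu_{q+1}$'' is exactly (ii).

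You can also stay inside your own framework, which is what the paper does. If $\deg(H)=2$, then $x^2+C/A\mid N$. The remainder of $N$ modulo $x^2+C/A$ is $(b^q-\tfrac{C}{A}d^q)x+(a^q-\tfrac{C}{A}c^q)$, and its vanishing forces $C/A=b^q/d^q$. Finally, (ii) gives $C\neq0$ at once because $0\notin\mu_{q+1}$. That settles the check you postponed in the sufficiency step.

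\textbf{Comparison with the paper's route.} Apart from this, your argument is correct and genuinely different from the paper's.

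The paper obtains necessity from root multisets: common roots of $D$ and $N$ are stable under $\alpha\mapsto\alpha^{-q}$, so $H$ is SCR, and then it applies Lemma~\ref{lem10}. For sufficiency under (i), it solves for the third root of $N$ and verifies three coefficient identities by hand. Only then does it use the SCR property to pass from $Q\mid N$ to $Q\mid D$.

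Your route rests on two observations: $x\hat{Q}=dN-a^qD$, and the system expressing $(Q,x\hat{Q})$ in terms of $(D,N)$ has determinant $C=d^{q+1}-a^{q+1}$. This buys several things:
\begin{itemize}
\item both directions follow at once;
\item it gives $Q\mid D$ and $Q\mid N$ simultaneously, with no coefficient bookkeeping;
\item it makes the role of $C\neq 0$ explicit, whereas the paper divides by $C$ without comment;
\item it does not need $D$ to have three roots, which the paper's root count tacitly assumes (i.e.\ $a\neq0$).
\end{itemize}
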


    \begin{proof}
        Assume that $\text{deg}(H)=2$. Since $\text{deg}(d^qD(x)-aN(x))=2$, the polynomials $H(x)$ and $d^qD(x)-aN(x)$ must have the same root. Assume that in the splitting field, the three roots of $D(x)$ are $\alpha_1,\alpha_2,\alpha_3$. Since $N(x)=x^3D^{(q)}(1/x)$, the three roots of $N(x)$ in the splitting field are $\alpha_1^{-q}$, $\alpha_2^{-q}$, $\alpha_3^{q}$. Since $H(x)$ is the greatest common divisor of $D(x)$ and $N(x)$, the roots of $H(x)$ in the splitting field are $\{\alpha_i,\alpha_j\}$ or $\{\alpha_i^{-q},\alpha_j^{-q}\}$, where $i,j\in \{1,2,3\}$. Hence we know the multiset of roots of $H(x)$ is preserved by the function $\alpha \mapsto \alpha^{-q}$, which tells us $H(x)$ is an SCR polynomial from Lemma \ref{lem2}. Using Lemma \ref{lem10} below, we know $H(x)$ is an SCR polynomial if and only if $\left(\frac{d^qc-ab^q}{d^qb-ac^q}\right)^q\frac{d^{q+1}-a^{q+1}}{d^qb-ac^q}=\frac{d^qc-ab^q}{d^qb-ac^q}$ and $\left(\frac{d^{q+1}-a^{q+1}}{d^qb-ac^q}\right)^{q+1}=1$. If $d^qc\neq ab^q$, we can use $\left(\frac{d^qc-ab^q}{d^qb-ac^q}\right)^q\frac{d^{q+1}-a^{q+1}}{d^qb-ac^q}=\frac{d^qc-ab^q}{d^qb-ac^q}$ to get  $\left(\frac{d^{q+1}-a^{q+1}}{d^qb-ac^q}\right)^{q+1}=1$. Hence the condition $\left(\frac{d^{q+1}-a^{q+1}}{d^qb-ac^q}\right)^{q+1}=1$ is redundant. When $d^qc=ab^q$, the condition $\left(\frac{d^qc-ab^q}{d^qb-ac^q}\right)^q\frac{d^{q+1}-a^{q+1}}{d^qb-ac^q}=\frac{d^qc-ab^q}{d^qb-ac^q}$ is obviously true. Since $x^2+\frac{d^{q+1}-a^{q+1}}{d^qb-ac^q}$ is a factor of $d^qx^3+c^qx^2+b^qx+a^q$. Using the factorization, we can get $\frac{d^{q+1}-a^{q+1}}{d^qb-ac^q}=\frac{b^q}{d^q}\in \mu_{q+1}$.

        Conversely, if condition (ii) is satisfied, then $x^2+\frac{d^{q+1}-a^{q+1}}{d^qb-ac^q}$ is a factor of $d^qx^3+c^qx^2+b^qx+a^q$ and $ax^3+bx^2+cx+d$. Hence $\text{deg}(H)=2$. If condition (i) is satisfied, we know $d^qD(x)-aN(x)$ is an SCR polynomial from Lemma \ref{lem10}. We only need to show $x^2+\frac{d^qc-ab^q}{d^qb-ac^q}x+\frac{d^{q+1}-a^{q+1}}{d^qb-ac^q}$ is a factor of $N(x)=d^qx^3+c^qx^2+b^qx+a^q$. Assume $N(x)$ has a factorization $d^q(x^3+\frac{c^q}{d^q}x^2+\frac{b^q}{d^q}x+\frac{a^q}{d^q})=d^q\left(x^2+\frac{d^qc-ab^q}{d^qb-ac^q}x+\frac{d^{q+1}-a^{q+1}}{d^qb-ac^q}\right)\left(x+\alpha\right)$, where $\alpha$ is a root of $N(x)$ and not a root of $d^qD(x)-aN(x)$. Then by comparing coefficients, we have 
        \begin{eqnarray}\label{eqn7}
            \left\{
            \begin{array}{cc}
                 \frac{d^qc-ab^q}{d^qb-ac^q}+\alpha&=\frac{c^q}{d^q}  \\
               \alpha\frac{d^qc-ab^q}{d^qb-ac^q}+\frac{d^{q+1}-a^{q+1}}{d^qb-ac^q} &=\frac{b^q}{d^q}\\
                \alpha\frac{d^{q+1}-a^{q+1}}{d^qb-ac^q}&=\frac{a^q}{d^q}.
            \end{array}
            \right.
        \end{eqnarray}

        From the third equation of (\ref{eqn7}), we have
        \begin{eqnarray}\label{eqn8}
            \alpha=\frac{a^q}{d^q}\frac{d^qb-ac^q}{d^{q+1}-a^{q+1}}.
        \end{eqnarray}

        To show $\alpha$ is a solution of Equation (\ref{eqn7}), we show $\alpha=\frac{a^q}{d^q}\frac{d^qb-ac^q}{d^{q+1}-a^{q+1}}$ also satisfies the first and second equations of (\ref{eqn7}). Plugging it into the first equation of (\ref{eqn7}), we have
        \begin{eqnarray}\label{eqn9}
            \frac{d^qc-ab^q}{d^qb-ac^q}+\frac{a^q(d^qb-ac^q)}{d^q(d^{q+1}-a^{q+1})}=\frac{c^q}{d^q},
        \end{eqnarray}
        which is equivalent to
        \begin{eqnarray}\label{eqn10}
            d^q(d^{q+1}-a^{q+1})(d^qc-ab^q)+a^q(d^qb-ac^q)^2=c^q(d^qb-ac^q)(d^{q+1}-a^{q+1}).
        \end{eqnarray}

        Using the identity $(d^qc-ab^q)(d^{q+1}-a^{q+1})=(dc^q-a^qb)(d^qb-ac^q)$, we have
        \begin{dmath}\label{eqn11}
            d^q(dc^q-a^qb)(d^qb-ac^q)+a^q(d^qb-ac^q)^2=(d^qb-ac^q)(d^{q+1}c^q-a^{q+1}c^q)=c^q(d^qb-ac^q)(d^{q+1}-a^{q+1}),
        \end{dmath}
        which is the right side of Equation (\ref{eqn10}). Hence $\alpha$ satisfies the first equation of (\ref{eqn7}).  

        Now we show $\alpha$ satisfies the second equation of (\ref{eqn7}). Plugging it into the second equation of (\ref{eqn7}), we have
        \begin{eqnarray}\label{eqn12}
            \frac{a^q}{d^q}\left(\frac{d^qc-ab^q}{d^{q+1}-a^{q+1}}\right)+\frac{d^{q+1}-a^{q+1}}{d^qb-ac^q}=\frac{b^q}{d^q},
        \end{eqnarray}
        which is equivalent to
        \begin{eqnarray}\label{eqn13}
            a^q(d^qc-ab^q)(d^qb-ac^q)+d^q(d^{q+1}-a^{q+1})^2=b^q(d^{q+1}-a^{q+1})(d^qb-ac^q).
        \end{eqnarray}

        Using the identity $(d^qc-ab^q)(d^{q+1}-a^{q+1})=(dc^q-a^qb)(d^qb-ac^q)$ and its Frobenius $(dc^q-a^qb)(d^{q+1}-a^{q+1})=(d^qc-ab^q)(db^q-a^qc)$, we have
        \begin{eqnarray}\label{eqn14}
            (d^{q+1}-a^{q+1})^2=(db^q-a^qc)(d^qb-ac^q).
        \end{eqnarray}

        Using Equation (\ref{eqn14}), the left side of Equation (\ref{eqn13}) is transformed into
        \begin{eqnarray}\label{eqn15}
            a^q(d^qc-ab^q)(d^qb-ac^q)+d^q(db^q-a^qc)(d^qb-ac^q)=b^q(d^qb-ac^q)(d^{q+1}-a^{q+1}),
        \end{eqnarray}
        which is the right side of Equation (\ref{eqn13}). Hence $\alpha$ satisfies the second equation of (\ref{eqn7}).

        Now we show $\alpha$ is a solution of Equation (\ref{eqn7}), which is equivalent to saying that $d^qD(x)-aN(x)$ is a factor of $N(x)$. Since $d^qD(x)-aN(x)$ is an SCR polynomial, it must also be a factor of $D(x)$, implying that the greatest common divisor is $d^qD(x)-aN(x)$. Hence $\text{deg}(H)=2$.
        \end{proof}
    \begin{lem}\label{lem10}
        A quadratic polynomial $f(x)=x^2+Ax+B\in \mathbb{F}_{q^2}[x]$ is an SCR polynomial if and only if $A^qB=A$ and $B^{q+1}=1$.
    \end{lem}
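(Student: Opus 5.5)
Since $f(x)=x^2+Ax+B$ is monic of degree $2$, its hat is $\hat{f}(x)=x^2 f^{(q)}(1/x)=B^q x^2+A^q x+1$. The plan is to compare $\hat f$ with scalar multiples of $f$ coefficient by coefficient. By the definition of SCR, $f$ is SCR if and only if $\hat f=\lambda f$ for some $\lambda\in\mathbb{F}_{q^2}$.

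\emph{Necessity.} Suppose $\hat f=\lambda f$ and compare coefficients:
\begin{align*}
B^q &= \lambda \quad\text{(from $x^2$)},\\
A^q &= \lambda A \quad\text{(from $x$)},\\
1 &= \lambda B \quad\text{(from the constant term)}.
\end{align*}
The constant-term equation forces $B\neq 0$ and $\lambda=B^{-1}$. Substituting into the $x^2$-equation gives $B^{q}=B^{-1}$, that is, $B^{q+1}=1$. Substituting into the $x$-equation gives $A^q=A/B$, that is, $A^qB=A$. (Lemma \ref{lem2}(1) also gives $\lambda\in\mu_{q+1}$, which is consistent with this.)

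\emph{Sufficiency.} Assume $A^qB=A$ and $B^{q+1}=1$. Then $B\neq 0$, and we set $\lambda=B^{-1}=B^q$. The $x^2$-coefficient of $\lambda f$ is $\lambda=B^q$. The constant coefficient is $\lambda B=1$. The $x$-coefficient is $\lambda A=A/B$, and this equals $A^q$ by the hypothesis $A^qB=A$. Hence $\hat f=\lambda f$, so $f$ is SCR.

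The argument is purely a coefficient comparison, so there is no serious obstacle. The only point needing care is that the constant term $1$ of $\hat f$ (which comes from the leading coefficient of $f$) forces both $\lambda$ and $B$ to be nonzero, so the divisions above are legitimate. This also shows that the degenerate case $B=0$ is never SCR.

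As an alternative, one could use Lemma \ref{lem2}(4): write $f=(x-r_1)(x-r_2)$ and require the multiset $\{r_1,r_2\}$ to be stable under $r\mapsto r^{-q}$. This leads to the conditions $r_1r_2\in\mu_{q+1}$ and $(r_1+r_2)^q=(r_1+r_2)/(r_1r_2)$, which are the same two conditions. The coefficient comparison is cleaner, so I would present that.
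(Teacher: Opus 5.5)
Your proof is correct and is essentially the paper's argument: write $\hat f(x)=B^qx^2+A^qx+1$, compare coefficients in $\hat f=\lambda f$ to get $B^q=\lambda$, $A^q=\lambda A$, $1=\lambda B$, and for sufficiency take $\lambda=B^q=B^{-1}$. Your explicit remark that the constant-term equation forces $B\neq 0$, which justifies the divisions, is a small point the paper leaves implicit.
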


    \begin{proof}
        If $A^qB=A$ and $B^{q+1}=1$, then $\hat{f}(x)=B^qf(x)$, which implies $f(x)$ is an SCR polynomial. If $f(x)$ is an SCR polynomial, then $\hat{f}(x)=\alpha f(x)$. By comparing coefficients, we obtain $B^q=\alpha$, $A^q=\alpha A$, and $1=\alpha B$, which are equivalent to $A^qB=A$ and $B^{q+1}=1$.
    \end{proof}
\end{case}
Next, we consider when $\text{deg}(H)$ is equal to $1$. If $d^qc\neq ab^q$, then $\left(\frac{d^qc-ab^q}{d^qb-ac^q}\right)^q\frac{d^{q+1}-a^{q+1}}{d^qb-ac^q}\neq \frac{d^qc-ab^q}{d^qb-ac^q}$. Hence from Lemma \ref{lem10}, $d^qD(x)-aN(x)$ is not an SCR polynomial. If $d^{q+1}=a^{q+1}$, then $d^qD(x)-aN(x)=(d^qb-ac^q)x^2+(d^qc-ab^q)x$. The only possible solution is $\frac{ab^q-d^qc}{d^qb-ac^q}$. Since $\left(\frac{ab^q-d^qc}{d^qb-ac^q}\right)^{q+1}=\frac{a^{q+1}b^{q+1}-adb^qc^q-a^qd^qbc+d^{q+1}c^{q+1}}{d^{q+1}b^{q+1}-adb^qc^q-a^qd^qbc+a^{q+1}c^{q+1}}=1$, $\frac{ab^q-d^qc}{d^qb-ac^q}\in \mu_{q+1}$. Assume $\frac{ab^q-d^qc}{d^qb-ac^q}$ is a solution of $N(x)=d^qx^3+c^qx^2+b^qx+a^q$. Then we have a factorization $\left(x+\frac{d^qc-ab^q}{d^qb-ac^q}\right)(x^2+\alpha x+\beta)$. From $d^{q+1}=a^{q+1}$, by comparing coefficients in $N(x)$, we obtain that $\text{deg}(H)=1$ if and only if $-(d^qc-ab^q)^3=(d^qb-ac^q)[(b^{q+1}-c^{q+1})(d^qc-ab^q)+(bd^q-ac^q)(dc^q-a^qb)]$.

If $d^{q+1}\neq a^{q+1}$, then $d^qD(x)-aN(x)=(d^qb-ac^q)x^2+(d^qc-ab^q)x+(d^{q+1}-a^{q+1})$. In this case, $d^qD(x)-aN(x)$ is not an SCR polynomial. If $\text{deg}(H)=1$, then the root of $H(x)$ is in $\mu_{q+1}$. Hence the root must satisfy the following system of equations.
\begin{eqnarray}\label{eqn16}
	\left\{\begin{array}{c}
		x^2+\frac{d^qc-ab^q}{d^qb-ac^q}x+\frac{d^{q+1}-a^{q+1}}{d^qb-ac^q}=0,\\
        \frac{d^{q+1}-a^{q+1}}{db^q-a^qc}x^2+\frac{dc^q-a^qb}{db^q-a^qc}x+1=0.
	\end{array}
	\right.
\end{eqnarray}

According to the system of equations, the root must be a solution of the following equation:

\begin{eqnarray}\label{eqn17}
    \left(\frac{(d^{q+1}-a^{q+1})(d^qc-ab^q)}{(db^q-a^qc)^{q+1}}-\frac{(dc^q-a^qb)(d^qb-ac^q)}{(db^q-a^qc)^{q+1}}\right)x+\frac{(d^{q+1}-a^{q+1})^2-(db^q-a^qc)^{q+1}}{(db^q-a^qc)^{q+1}}=0.
\end{eqnarray}

Let $A_1=(d^{q+1}-a^{q+1})(d^qc-ab^q)-(dc^q-a^qb)(d^qb-ac^q)$ and $A_2=(d^{q+1}-a^{q+1})^2-(db^q-a^qc)^{q+1}$. If $A_1=0$ and $A_2=0$, then $d^qD(x)-aN(x)$ is an SCR polynomial, a contradiction. If $A_1=0$ and $A_2\neq 0$, then Equation (\ref{eqn17}) has no solution. Hence $H(x)=1$. If $A_1\neq 0$, we know the only possible solution is $x=-\frac{A_2}{A_1}$. It is easy to know when $\text{deg}(H)=1$, $\frac{A_2}{A_1}\in \mu_{q+1}$. We have the following lemma to show if $\frac{A_2}{A_1}\in \mu_{q+1}$, then $\text{deg}(H)=1$.

\begin{lem}\label{lem11}
    Assume that the polynomial $F(x)\in \mathbb{F}_{q^2}[x]$ is a linear combination of two arbitrary polynomials $G(x)$ and $\hat{G}(x)$, and that $x^{\text{deg}(G)-\text{deg}(F)}\hat{F}(x)$ is not a scalar multiple of $F(x)$. Then if a root $\alpha\in \mu_{q+1}$ is a common root of both $F(x)$ and $\hat{F}(x)$, then it must be a common root of $G(x)$ and $\hat{G}(x)$.
\end{lem}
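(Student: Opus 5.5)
The plan is to convert the two root conditions into a $2\times 2$ linear system in the unknowns $G(\alpha)$ and $\hat{G}(\alpha)$, and then to show that the non-scalar-multiple hypothesis is exactly the statement that this system has nonzero determinant. Write $F(x)=uG(x)+v\hat{G}(x)$ with $u,v\in\mathbb{F}_{q^2}$, and set $n=\deg(G)$. Because the hypothesis involves $x^{n-\deg(F)}\hat{F}(x)$, I will work throughout with the ``degree-$n$ hat'' $x^{n}F^{(q)}(1/x)$, which equals $x^{n-\deg(F)}\hat{F}(x)$.

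\textbf{Step 1: the degree-$n$ hat of $F$.} Applying $x^nF^{(q)}(1/x)$ to $F=uG+v\hat{G}$ termwise gives
\[
x^{n}F^{(q)}(1/x)=u^q\,x^nG^{(q)}(1/x)+v^q\,x^n\hat{G}^{(q)}(1/x)=u^q\hat{G}(x)+v^q G(x).
\]
The second equality uses that the degree-$n$ hat is an involution: writing $G=\sum_{i\le n}\alpha_ix^i$, we have $\hat{G}=\sum\alpha_i^qx^{n-i}$, and applying the map again returns $\sum\alpha_i^{q^2}x^{i}=G$, since $\alpha_i\in\mathbb{F}_{q^2}$. Hence
\[
x^{n-\deg(F)}\hat{F}(x)=v^qG(x)+u^q\hat{G}(x).
\]

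\textbf{Step 2: evaluate at $\alpha$.} Let $\alpha\in\mu_{q+1}$ be a common root of $F$ and $\hat{F}$. Since $\alpha\neq 0$, $\alpha$ is also a root of $x^{n-\deg(F)}\hat{F}$, even if the exponent $n-\deg(F)$ is negative. Put $X=G(\alpha)$ and $Y=\hat{G}(\alpha)$. Then
\[
uX+vY=0,\qquad v^qX+u^qY=0.
\]
If the determinant $u^{q+1}-v^{q+1}$ is nonzero, then $X=Y=0$, which says exactly that $\alpha$ is a common root of $G$ and $\hat{G}$.

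\textbf{Step 3: the determinant is nonzero.} Suppose instead that $u^{q+1}=v^{q+1}$. If $u=0$, then $v=0$, so $F=0$; this is excluded, because $F$ must be nonzero for $\hat{F}$ to be defined. So $u\neq 0$, and I set $\lambda=v^q/u$. Then $\lambda v=v^{q+1}/u=u^{q+1}/u=u^q$, so $(v^q,u^q)=\lambda(u,v)$. Substituting into Step 1 gives
\[
x^{n-\deg(F)}\hat{F}(x)=\lambda F(x),
\]
which contradicts the hypothesis that this polynomial is not a scalar multiple of $F$. Therefore the determinant is nonzero, and Step 2 finishes the argument.

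The main obstacle is the degree bookkeeping in Step 1: making sure that the normalization by $n=\deg(G)$ really yields the clean identity $v^qG+u^q\hat{G}$. In particular this must hold when $\deg(F)<n$, or when $G(0)=0$ so that $\deg(\hat{G})<n$. I would handle this by using the degree-$n$ hat uniformly as above. After that, the argument is pure linear algebra. Notably, the condition $\alpha^{-q}=\alpha$ is not actually needed; only $\alpha\neq 0$ is used.
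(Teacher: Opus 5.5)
Your proof is correct and follows the paper's argument. The paper also writes $F=mG+n\hat G$, derives $x^{\deg G-\deg F}\hat F=n^qG+m^q\hat G$, and concludes that the $2\times 2$ system in $(G(\alpha),\hat G(\alpha))$ has nonzero determinant. You additionally justify two steps the paper only asserts: the involution property behind the formula for $x^{\deg G-\deg F}\hat F$, and the fact that $u^{q+1}=v^{q+1}$ would make $x^{\deg G-\deg F}\hat F$ a scalar multiple of $F$.
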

\begin{proof}
    Let $F(x)=mG(x)+n\hat{G}(x)$. From the expression, we know that $\text{deg}(F)\leq \text{max} \{\text{deg}(G),\text{deg}(\hat{G})\}\leq \text{deg}(G)$. Since $F(x)=mG(x)+n\hat{G}(x)$, we have $x^{\text{deg}(G)-\text{deg($F$)}}\hat{F}(x)=n^qG(x)+m^q\hat{G}(x)$. If $\alpha$ is not a common root of $G(x)$ and $\hat{G}(x)$, then $\begin{bmatrix}
        G(\alpha)\\
        \hat{G}(\alpha)
    \end{bmatrix}$ is a nonzero solution of Equation (\ref{eqn18}):
    \begin{eqnarray}\label{eqn18}
        \begin{bmatrix}
            m&n\\
            n^q&m^q
            
        \end{bmatrix}\begin{bmatrix}
            G(\alpha)\\
            \hat{G}(\alpha)
        \end{bmatrix}=\begin{bmatrix}
            0\\
            0
        \end{bmatrix}
    .\end{eqnarray}

   Since $x^{\text{deg}(G)-\text{deg}(F)}\hat{F}(x)$ is not a scalar multiple of $F(x)$, the determinant of $\begin{bmatrix}
        m&n\\
        n^q&m^q
    \end{bmatrix}$ is nonzero. This implies that only the zero solution can exist, which leads to a contradiction.
\end{proof}

Using Lemma \ref{lem11}, when $d^{q+1}\neq a^{q+1}$, $A_2/A_1\in \mu_{q+1}$, $-\frac{A_2}{A_1}$ is also a common root of $N(x)$ and $D(x)$, which implies that $\text{deg}(H)=1$.

Finally, we consider $d^qc=ab^q$. In this case, $d^qD(x)-aN(x)=(d^qb-ac^q)x^2+(d^{q+1}-a^{q+1})$. If $d^{q+1}=a^{q+1}$, then $H(x)=1$. If $d^{q+1}\neq a^{q+1}$, it is easy to know if $\frac{d^{q+1}-a^{q+1}}{d^qb-ac^q}\notin \mu_{q+1}$, then $H(x)=1$. If $\frac{d^{q+1}-a^{q+1}}{d^qb-ac^q}\in \mu_{q+1}$ and $\frac{d^{q+1}-a^{q+1}}{d^qb-ac^q}\neq \frac{b^q}{d^q}$, then $H(x)=1$.

From the above discussion, we have the following theorem.

\begin{thm}\label{thm1}
    The $\text{deg}(g)$ can be $0,1,2,3$, characterized as follows:

    (1)$\text{deg}(g)=0$ if and only if one of the following statements holds:

    \hspace{2em}(i) $d=0$, $a=0$, and $b^{q+1}=c^{q+1}$;

    \hspace{2em}(ii) $d\neq 0$, $d^qb=ac^q$, $d^qc=ab^q$, and $d^{q+1}=a^{q+1}$.

    (2) $\text{deg}(g)=1$ if and only if one of the following holds:

    \hspace{2em}(i) $d=0$, $c\neq 0$, $bc^q=ab^q$, and $c^{q+1}=a^{q+1}$;

    \hspace{2em}(ii) $d=0$, $a=0$, and $c^{q+1}\neq b^{q+1}$;

    \hspace{2em}(iii) $d\neq 0$, $d^qb\neq ac^q$, $d^qc\neq ab^q$, and $\left(\frac{d^qc-ab^q}{d^qb-ac^q}\right)^q\frac{d^{q+1}-a^{q+1}}{d^qb-ac^q}=\frac{d^qc-ab^q}{d^qb-ac^q}$;

    \hspace{2em}(iv) $d\neq 0$, $d^qb\neq ac^q$, $d^qc=ab^q$, and $\frac{d^{q+1}-a^{q+1}}{d^qb-ac^q}=\frac{b^q}{d^q}\in \mu_{q+1}$.

    (3) $\text{deg}(g)=2$ if and only if one of the following holds:

    \hspace{2em}(i) $d=0$, $b\neq 0$, $a\neq 0$, $bc^q\neq ab^q$, and $\frac{a^{q+1}-c^{q+1}}{bc^q-ab^q}\in \mu_{q+1}$;

    \hspace{2em}(ii) $d\neq 0$, $d^qb=ac^q$, $d^qc\neq ab^q$, $d^{q+1}\neq a^{q+1}$, and $\frac{d^qc-ab^q}{d^{q+1}-a^{q+1}}\in \mu_{q+1}$;

    \hspace{2em}(iii) $d\neq 0$, $d^qb\neq ac^q$, $d^qc\neq ab^q$, $d^{q+1}=a^{q+1}$, and $-(d^qc-ab^q)^3=(d^qb-ac^q)[(b^{q+1}-c^{q+1})(d^qc-ab^q)+(bd^q-ac^q)(dc^q-a^qb)]$;

    \hspace{2em}(iv) $d\neq 0$, $d^qb\neq ac^q$, $d^qc\neq ab^q$, $d^{q+1}\neq a^{q+1}$, and $\frac{A_2}{A_1}\in \mu_{q+1}$, where $A_1=(d^{q+1}-a^{q+1})(d^qc-ab^q)-(dc^q-a^qb)(d^qb-ac^q)$ and $A_2=(d^{q+1}-a^{q+1})^2-(db^q-a^qc)^{q+1}$.

    (4) In all other cases, $\text{deg}(g)=3$.
\end{thm}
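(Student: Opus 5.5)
The proof of Theorem \ref{thm1} collects the case analysis of this section into a single statement. The organizing identity is $\text{deg}(g)=3-\text{deg}(H)$, valid whenever $N$ and $D$ are not both of degree less than $3$. When $d=0$ the degree of $N$ drops, and we must track $\max(\text{deg}(N/H),\text{deg}(D/H))$ directly. I will go through the three top-level cases in order: $d=0$; $d\neq 0$ with $d^qb=ac^q$; and $d\neq 0$ with $d^qb\neq ac^q$. In each case I record which coefficient conditions give $\text{deg}(H)\in\{0,1,2,3\}$ and translate them into $\text{deg}(g)$.

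\textbf{Case $d=0$.} Here $N=c^qx^2+b^qx+a^q$ and $D=x(ax^2+bx+c)$, so $\text{deg}(g)=\max(\text{deg}N,\text{deg}D)-\text{deg}(H)$. I will use the subcase computations already carried out in Case 1.

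The constant case (1)(i) occurs for $a=0$, $b^{q+1}=c^{q+1}$. Then $bx+c$ is SCR with its root in $\mu_{q+1}$, and that root is shared with $N=c^qx+b^q$.

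The degree-one cases (2)(i),(ii) arise in two ways:
\begin{itemize}
\item[(a)] $H=c^qx^2+b^qx+a^q$, which occurs when $bc^q=ab^q$ and $c^{q+1}=a^{q+1}$;
\item[(b)] $a=0$ and $H=1$.
\end{itemize}

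The degree-two case (3)(i) is when $H$ is the linear factor $(bc^q-ab^q)x+(c^{q+1}-a^{q+1})$ and $a\neq 0$. I must check consistency: the subcase $c=0$ with $a/b$ times a unit in $\mu_{q+1}$ is absorbed into (3)(i), because then $bc^q-ab^q=-ab^q\neq 0$.

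\textbf{Case $d\neq 0$, $d^qb=ac^q$.} Here $H$ divides the linear polynomial $(d^qc-ab^q)x+(d^{q+1}-a^{q+1})$. If both coefficients vanish, then $N$ is a scalar multiple of $D$, giving (1)(ii). If exactly one vanishes, $H=1$ and $\text{deg}(g)=3$. If neither vanishes, Lemma \ref{lem8} gives $\text{deg}(H)=1$ exactly when the root lies in $\mu_{q+1}$. That is equivalent to $\frac{d^qc-ab^q}{d^{q+1}-a^{q+1}}\in\mu_{q+1}$, which is (3)(ii).

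\textbf{Case $d\neq 0$, $d^qb\neq ac^q$.} Here $H$ divides the quadratic $d^qD-aN$. Lemma \ref{lem9} characterizes $\text{deg}(H)=2$, which gives (2)(iii),(iv). For $\text{deg}(H)=1$ I split into three subcases.
\begin{itemize}
\item[(a)] $d^{q+1}=a^{q+1}$. The only candidate root is $\frac{ab^q-d^qc}{d^qb-ac^q}$, which lies in $\mu_{q+1}$. Comparing coefficients in $N$ gives the cubic relation in (3)(iii).
\item[(b)] $d^{q+1}\neq a^{q+1}$ and $d^qc\neq ab^q$. Eliminating $x^2$ between $d^qD-aN$ and its hat gives the linear equation (\ref{eqn17}). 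The degree is two exactly when $A_1\neq 0$ and $A_2/A_1\in\mu_{q+1}$; sufficiency follows from Lemma \ref{lem11}, applied with $G=D$ and $F=d^qD-aN$.
\item[(c)] $d^qc=ab^q$ and not (2)(iv). Here $H=1$.
\end{itemize}

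Finally I must verify that the listed conditions are mutually exclusive and exhaustive, so that ``all other cases'' in (4) genuinely means $H=1$ with $\max(\text{deg}N,\text{deg}D)=3$.

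The main obstacle is subcase (b) of the last case: showing that $A_1=0$ forces $H=1$. This requires the claim that $A_1=A_2=0$ makes $d^qD-aN$ SCR, contradicting the failure of the Lemma \ref{lem10} condition. I would check that claim directly against Lemma \ref{lem10} by rewriting both conditions in terms of the normalized coefficients.

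The second delicate point is the hypothesis of Lemma \ref{lem11}, namely that $\hat F$ is not a scalar multiple of $F$. In this subcase it is exactly the non-SCR property of $d^qD-aN$, which I would confirm from the failure of condition (i) of Lemma \ref{lem9}.
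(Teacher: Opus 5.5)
Your case split is the paper's own: the Section~3 discussion is its proof of Theorem~\ref{thm1}. However, the sufficiency half of (3)(iv), your subcase (b), does not go through as you set it up.

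Write $Q=d^qD-aN=ux^2+vx+w$, with $u=d^qb-ac^q$, $v=d^qc-ab^q$, $w=d^{q+1}-a^{q+1}$. Then $\hat Q=wx^2+v^qx+u^q$, and the eliminant (\ref{eqn17}) is $L:=wQ-u\hat Q=A_1x+A_2$. All you know about $x_0=-A_2/A_1$ is that $x_0\in\mu_{q+1}$ and $L(x_0)=0$.

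Lemma~\ref{lem11} with $G=D$, $F=Q$ takes as its \emph{hypothesis} that $x_0$ is already a root of $Q$ and $\hat Q$. That is precisely what is not yet known; the lemma only transfers such a root from $Q$ to $D$ and $N$.

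You have also misidentified the nondegeneracy condition in that pairing. The polynomial to compare with $Q$ is $x\hat Q=dN-a^qD=wx^3+v^qx^2+u^qx$. Since $w\neq0$, this is a cubic and can never be a scalar multiple of the quadratic $Q$. The determinant there is $d^{q+1}-a^{q+1}$, and the non-SCR property of $Q$ plays no role.

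The missing step is a first application of Lemma~\ref{lem11} with $G=Q$ and $F=L$, i.e.\ $m=w$, $n=-u$. Then $x\hat L=-u^qQ+w\hat Q=A_2x^2+A_1^qx$, and the determinant is $w^2-u^{q+1}=A_2$. This is nonzero because $A_2/A_1\in\mu_{q+1}$. Since $x_0\in\mu_{q+1}$, you also have $\hat L(x_0)=x_0L(x_0)^q=0$. Hence the lemma gives $Q(x_0)=\hat Q(x_0)=0$.

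Equivalently, set $z=Q(x_0)$. Then $L(x_0)=0$ reads $wz=ux_0^2z^q$, and taking norms forces $z=0$ unless $A_2=0$. Only after this does your pairing $(G,F)=(D,Q)$, with determinant $w\neq0$, yield $D(x_0)=N(x_0)=0$.

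Relatedly, the step you flag as the main obstacle is vacuous. Since $w\in\mathbb{F}_q$, you have $A_1^q=wv^q-vu^q$. So $A_1=0$ is exactly the condition of Lemma~\ref{lem9}(i), which, given $v\neq0$, forces $\deg(H)=2$. Once case (2)(iii) is excluded, $A_1\neq0$ automatically. The real content of (3)(iv) is therefore the two-step transfer above, not the $A_1=0$ case.
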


\section{Conditions for $g(x)$ to permute $\mu_{q+1}$}

In this section, we characterize when $g(x)$ permutes $\mu_{q+1}$, assuming that $g(x)$ has degree $0,1,2$, or $3$.

It is easy to see that when $\text{deg}(g)=0$, $g(x)$ is constant on $\mu_{q+1}$, and therefore  cannot permute $\mu_{q+1}$.

\subsection{\text{Deg(g)}=1}
When $\text{deg}(g)=1$, Theorem \ref{thm1} (2) shows there are four cases. We discuss them case by case.

Case (i): $d=0$, $c\neq 0$, $bc^q=ab^q$, and $c^{q+1}=a^{q+1}$. In this case, $g(x)=\frac{c^q}{ax}$. Since $c^{q+1}=a^{q+1}$, we have $\frac{c^q}{a}\in \mu_{q+1}$. Hence $g(x)=\frac{c^q}{ax}$ permutes $\mu_{q+1}$.

Case (ii): $d=0$, $a=0$, and $c^{q+1}\neq b^{q+1}$. In this case, $g(x)=\frac{c^qx+b^q}{bx+c}$. It is easy to see that $g(x)\in \mu_{q+1}$ when $x\in \mu_{q+1}$. Since $c^{q+1}\neq b^{q+1}$, $g(x)$ is an invertible linear transform. Hence $g(x)$ permutes $\mu_{q+1}$.

Case (iii): $d\neq 0$, $d^qb\neq ac^q$, $d^qc\neq ab^q$, and $\left(\frac{d^qc-ab^q}{d^qb-ac^q}\right)^q\frac{d^{q+1}-a^{q+1}}{d^qb-ac^q}=\frac{d^qc-ab^q}{d^qb-ac^q}$. In this case, if $a\neq 0$, then 
$g(x)=\frac{d^qx+\frac{a^q(d^qb-ac^q)}{d^{q+1}-a^{q+1}}}{ax+\frac{d(d^qb-ac^q)}{d^{q+1}-a^{q+1}}}$.
Denote $\alpha=\frac{d(d^qb-ac^q)}{a(d^{q+1}-a^{q+1})}$. From Lemma \ref{lem2} (3), we know $\alpha^{-q}={\frac{a^q(d^qb-ac^q)}{d^q(d^{q+1}-a^{q+1})}}$. Then $g(x)=\frac{d^q(x+\alpha^{-q})}{a(x+\alpha)}$. From Lemma \ref{lem3}, we know $g(x)$ permutes $\mu_{q+1}$ if and only if there exists $\lambda\in \mathbb{F}_{q^2}^{*}$ such that $\frac{a\alpha}{d}=\lambda^{q-1}$ and $(\alpha^{-q})^{q+1}\neq 1$. These two conditions are equivalent to $\frac{a\alpha}{d}\in \mu_{q+1}$ and $\alpha\not\in \mu_{q+1}$. Next, we show that  these two conditions hold.

Since $\alpha=\frac{d(d^qb-ac^q)}{a(d^{q+1}-a^{q+1})}$, we know $\frac{a\alpha}{d}=\frac{d^qb-ac^q}{d^{q+1}-a^{q+1}}$. Since $\left(\frac{d^qc-ab^q}{d^qb-ac^q}\right)^q\frac{d^{q+1}-a^{q+1}}{d^qb-ac^q}=\frac{d^qc-ab^q}{d^qb-ac^q}$, we have 
\begin{eqnarray}\label{eqn19}
    (dc^q-a^qb)(d^{q+1}-a^{q+1})=(d^qc-ab^q)(db^q-a^qc)
\end{eqnarray}
and its Frobenius
\begin{eqnarray}\label{eqn20}
    (d^qc-ab^q)(d^{q+1}-a^{q+1})=(dc^q-a^qb)(d^qb-ac^q).
\end{eqnarray}

By multiplying Equation (\ref{eqn19}) and Equation (\ref{eqn20}) and dividing by $(dc^q-a^qb)(d^qc-ab^q)$, we obtain
\begin{eqnarray}\label{eqn21}
    (d^{q+1}-a^{q+1})^2=(db^q-a^qc)(d^qb-ac^q).
\end{eqnarray}

Since $\frac{a\alpha}{d}=\frac{d^qb-ac^q}{d^{q+1}-a^{q+1}}$, from Equation (\ref{eqn21}), we know 
that $\frac{a\alpha}{d}\in \mu_{q+1}$. Moreover, since in this case $d^{q+1}\neq a^{q+1}$, we conclude that $\alpha\not\in \mu_{q+1}$. Hence,  these two conditions hold, and $g(x)$ permutes $\mu_{q+1}$.

If $a=0$, then $g(x)=\frac{d^qx}{b}$. From $\left(\frac{d^qc-ab^q}{d^qb-ac^q}\right)^q\frac{d^{q+1}-a^{q+1}}{d^qb-ac^q}=\frac{d^qc-ab^q}{d^qb-ac^q}$, we obtain $c^qd=bc$. Since $c\neq 0$, it follows that $\frac{d}{b}\in \mu_{q+1}$. Hence, $\frac{d^q}{b}\in \mu_{q+1}$, and $g(x)$ permutes $\mu_{q+1}$.

Case (iv): $d\neq 0$, $d^qb\neq ac^q$, $d^qc=ab^q$, and $\frac{d^{q+1}-a^{q+1}}{d^qb-ac^q}=\frac{b^q}{d^q}\in \mu_{q+1}$. In this case, if $a\neq 0$, then $g(x)=\frac{d^q(x+\frac{c^q}{d^q})}{a(x+\frac{b}{a})}$. From Lemma \ref{lem3}, we know that $g(x)$ permutes $\mu_{q+1}$ if and only if there exists $\lambda\in \mathbb{F}_{q^2}^{*}$ such that $\frac{c^q}{a^q}=\frac{d^q}{b^q}=\lambda^{q-1}$ and $a^{q+1}\neq b^{q+1}$. From $d^qc=ab^q$ and $\frac{b^q}{d^q}\in \mu_{q+1}$, we obtain $\frac{c^q}{a^q}=\frac{d^q}{b^q}\in \mu_{q+1}$. If $a^{q+1}=b^{q+1}$, then $d^{q+1}=a^{q+1}$, which implies $0\in \mu_{q+1}$, a contradiction. Hence these two conditions hold, and $g(x)$ permutes $\mu_{q+1}$. If $a=0$, then $b\neq 0$ and $c=0$. Hence in this case, $g(x)=\frac{d^qx}{b}$. Since $\frac{d^q}{b}\in \mu_{q+1}$, $g(x)$ permutes $\mu_{q+1}$.

\subsection{\text{Deg(g)}=2}

We have the following classification result for permutation rational functions of degree 2: a degree-2 rational function $f(x)\in \mathbb{F}_q(x)$ permutes $\mathbb{P}^{1}(\mathbb{F}_q)$ if and only if $q$ is even and $f(x)$ is equivalent to $x^2$.

From the classification result for permutation rational functions of degree 2, we only need to consider the case where $q$ is even. Using some degree-1 rational function $\rho$, $\sigma\in \mathbb{F}_{q^2}(x)$ which map $\mu_{q+1}$ to $\mathbb{P}^{1}(\mathbb{F}_q)$, we obtain $\tilde{g}(x)=\rho\circ g\circ \sigma^{-1}\in \mathbb{F}_q(x)$ from Lemma \ref{lem6}. Hence $g(x)$ permutes $\mu_{q+1}$ if and only if $\tilde{g}(x)$ permutes $\mathbb{P}^{1}(\mathbb{F}_q)$. Applying this result to $g(x)$, we obtain $g(x)$ is of the form $\frac{Ax^2+B}{Cx^2+D}$, where $\frac{Ax+B}{Cx+D}$ permutes $\mu_{q+1}$. 

Case (i): $d=0$, $a\neq 0$, $b\neq 0$, $bc^q\neq ab^q$, and $\frac{a^{q+1}-c^{q+1}}{bc^q-ab^q}\in \mu_{q+1}$. If $c=0$, then $g(x)=\frac{a}{b^q}x^2$. From  $\frac{a^{q+1}-c^{q+1}}{bc^q-ab^q}\in \mu_{q+1}$, we have $\frac{a^q}{b^q}\in \mu_{q+1}$, which implies $\frac{a}{b^q}\in \mu_{q+1}$. Hence $g(x)$ permutes $\mu_{q+1}$. If $c\neq 0$, then the degree of the numerator of $g(x)$ is 2, while the degree of the denominator of $g(x)$ is 1. This contradicts the assumption that $g(x)$ is of the form $\frac{Ax^2+B}{Cx^2+D}$.

Case (ii): $d\neq 0$, $d^qb=ac^q$, $d^qc\neq ab^q$, $d^{q+1}\neq a^{q+1}$, and $\frac{d^qc-ab^q}{d^{q+1}-a^{q+1}}\in \mu_{q+1}$. In this case, from Lemma \ref{lem8}, we know $H(x)=x+\frac{d^{q+1}-a^{q+1}}{d^qc-ab^q}$. Hence $g(x)$ permutes $\mu_{q+1}$ if and only if $g(x)$ is of the form $\frac{\left(x+\frac{d^{q+1}-a^{q+1}}{d^qc-ab^q}\right)(Ax^2+B)}{\left(x+\frac{d^{q+1}-a^{q+1}}{d^qc-ab^q}\right)(Cx^2+D)}$. From $g(x)=\frac{N(x)}{D(x)}=\frac{d^qx^3+c^qx^2+b^qx+a^q}{ax^3+bx^2+cx+d}$, by comparing coefficients, we get
\begin{equation} \label{eqn22}
\begin{aligned}
  A &= d^q, \quad &d^q \cdot \frac{d^{q+1} - a^{q+1}}{d^q c - a b^q} = c^q, \quad
  B &= b^q, \quad &b^q \cdot \frac{d^{q+1} - a^{q+1}}{d^q c - a b^q} = a^q, \\
  C &= a,   \quad &a   \cdot \frac{d^{q+1} - a^{q+1}}{d^q c - a b^q} = b,   \quad
  D &= c,   \quad &c   \cdot \frac{d^{q+1} - a^{q+1}}{d^q c - a b^q} = d.
\end{aligned}
\end{equation}

If $a=0$, then from the conditions of Case (ii), we have $b=0$ and $\frac{d}{c}\in \mu_{q+1}$. Substituting these into Equation (\ref{eqn22}), we obtain $A=d^q$, $B=C=0$ and $D=c$ and the other expressions remain valid. Hence $g(x)=\frac{d^q}{c}x^2$, which permutes $\mu_{q+1}$ since $d^{q+1}=c^{q+1}$.

If $a\neq 0$, from Equation (\ref{eqn22}), we have $\frac{c^q}{d^q}=\frac{b}{a}=\frac{d}{c}=\frac{a^q}{b^q}=\frac{d^{q+1}-a^{q+1}}{d^qc-ab^q}\in \mu_{q+1}$. From Lemma \ref{lem3}, we know that $\frac{d^qx+b^q}{ax+c}$ permutes $\mu_{q+1}$ if and only if $a^{q+1}\neq c^{q+1}$. Since $d^{q+1}\neq a^{q+1}$ and $d^{q+1}=c^{q+1}$, it follows that $\frac{d^qx+b^q}{ax+c}$ permutes $\mu_{q+1}$ under these conditions. Hence $g(x)$ permutes $\mu_{q+1}$ if and only if  $\frac{c^q}{d^q}=\frac{b}{a}=\frac{d}{c}=\frac{a^q}{b^q}=\frac{d^{q+1}-a^{q+1}}{d^qc-ab^q}$.

Case (iii): $d\neq 0$, $d^qb\neq ac^q$, $d^qc\neq ab^q$, $d^{q+1}=a^{q+1}$, and $-(d^qc-ab^q)^3=(d^qb-ac^q)[(b^{q+1}-c^{q+1})(d^qc-ab^q)+(bd^q-ac^q)(dc^q-a^qb)]$. In this case, $H(x)=x+\frac{d^qc-ab^q}{d^qb-ac^q}$. Hence $g(x)$ permutes $\mu_{q+1}$ if and only of $g(x)$ is of the form $\frac{\left(x+\frac{d^qc-ab^q}{d^qb-ac^q}\right)(Ax^2+B)}{\left(x+\frac{d^qc-ab^q}{d^qb-ac^q}\right)(Cx^2+D)}$. From $g(x)=\frac{N(x)}{D(x)}=\frac{d^qx^3+c^qx^2+b^qx+a^q}{ax^3+bx^2+cx+d}$, by comparing coefficients, we get
\begin{equation} \label{eqn23}
\begin{aligned}
  A &= d^q, \quad &d^q \cdot \frac{d^qc-ab^q}{d^qb-ac^q} = c^q, \quad
  B &= b^q, \quad &b^q \cdot \frac{d^qc-ab^q}{d^qb-ac^q} = a^q, \\
  C &= a,   \quad &a   \cdot \frac{d^qc-ab^q}{d^qb-ac^q} = b,   \quad
  D &= c,   \quad &c   \cdot \frac{d^qc-ab^q}{d^qb-ac^q} = d.
\end{aligned}
\end{equation}

 From Equation (\ref{eqn23}), we have $\frac{c^q}{d^q}=\frac{b}{a}=\frac{d}{c}=\frac{a^q}{b^q}\in \mu_{q+1}$. From Lemma \ref{lem3}, we know that $\frac{d^qx+b^q}{ax+c}$ permutes $\mu_{q+1}$ if and only if $a^{q+1}\neq c^{q+1}$. Since $c^{q+1}=d^{q+1}=a^{q+1}$, it follows that $\frac{d^qx+b^q}{ax+c}$ does not permute $\mu_{q+1}$. Hence $g(x)$ does not permute $\mu_{q+1}$.

 Case (iv): $d\neq 0$, $d^qb\neq ac^q$, $d^qc\neq ab^q$, $d^{q+1}\neq a^{q+1}$, and $\frac{A_2}{A_1}\in \mu_{q+1}$, where $A_1=(d^{q+1}-a^{q+1})(d^qc-ab^q)-(dc^q-a^qb)(d^qb-ac^q)$ and $A_2=(d^{q+1}-a^{q+1})^2-(db^q-a^qc)^{q+1}$. Similar to Case (ii), when $a\neq 0$, we know that $g(x)$ permutes $\mu_{q+1}$ if and only if $\frac{c^q}{d^q}=\frac{b}{a}=\frac{d}{c}=\frac{a^q}{b^q}=\frac{A_2}{A_1}\in \mu_{q+1}$ . If $a\neq 0$, since $d^qb\neq ac^q$, we cannot get $\frac{c^q}{d^q}=\frac{b}{a}$, which shows that $g(x)$ does not permute $\mu_{q+1}$. When $a=0$, if $g(x)$ permutes $\mu_{q+1}$, then $b=0$, which contradicts $d^qb\neq ac^q$.

\subsection{\text{Deg(g)}=3}

From now on, we assume that the rational function $g(x)\in \mathbb{F}_q(x)$ has degree $3$. According to Lemma \ref{lem7}, which  classifies separable permutation rational functions of degree $3$, we know that when the characteristic of the finite field $\mathbb{F}_q$ is not $3$, any separable permutation rational function of degree $3$ is equivalent over $\mathbb{F}_{q^2}$ to $x^3$. In contrast, when the characteristic is $3$, any separable permutation rational function of degree $3$ is equivalent over $\mathbb{F}_q$ to $x^3-\alpha x$, where $\alpha$ is a nonsquare element in $\mathbb{F}_q$. Therefore, we will now discuss the two cases separately: when the characteristic does not divide 3 and when it does.

\textbf{Case 1} $3\nmid q$.

In this case, any rational function $g(x)$ of degree $3$ is separable since $g(x)\not\in \mathbb{F}_q(x^p)$, where $p$ is the characteristic of $\mathbb{F}_q$. From Lemma \ref{lem6}, we know if $g(x)=\frac{N(x)}{D(x)}\in \mathbb{F}_{q^2}(x)$ permutes $\mu_{q+1}$, then for any degree-$1$ rational functions $\rho,\sigma\in \mathbb{F}_{q^2}(x)$ that map $\mu_{q+1}$ to $\mathbb{P}^{1}(\mathbb{F}_q)$, the function $h(x):=\rho\circ g\circ \sigma^{-1}$ permutes $\mathbb{P}^{1}(\mathbb{F}_q)$, and $h(x)\in \mathbb{F}_q(x)$. According to Lemma \ref{lem7}, we know $h(x)$ is equivalent over $\mathbb{F}_{q^2}$ to $x^3$. Note that $x^3$ has two branch points in $\mathbb{P}^{1}(\mathbb{F}_q)$: $0$ and $\infty$, each of which has a unique preimage $0$ and $\infty$ respectively. Since the number of branch points and the distribution of multiset $E_g(\beta)$, as $\beta$ ranges over  $\mathbb{P}^{1}(\bar{\mathbb{F}}_q)$, are invariant under the composition with degree-1 rational functions, both $h(x)$ and $g(x)$ each have exactly two branch points in $\mathbb{P}^{1}(\bar{\mathbb{F}}_q)$, and each branch point has a unique preimage. Therefore, we obtain a necessary condition for $g(x)$ to induce a permutation of $\mu_{q+1}$ when $3\nmid q$.

\begin{lem}\label{lem12}
When $3\nmid q$, if $g(x)$ is a degree-$3$ permutation of $\mu_{q+1}$, then $g(x)$ has two branch points in $\mathbb{P}^{1}(\bar{\mathbb{F}}_q)$, each of which has a unique preimage.
\end{lem}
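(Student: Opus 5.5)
The plan is to pass from $g$ to a rational function over $\mathbb{F}_q$ via Lemma \ref{lem6}, identify it using the classification in Lemma \ref{lem7}, and then read off the ramification data. First, since $3\nmid q$, the characteristic $p$ is not $3$. A degree-$3$ function lying in $\mathbb{F}_{q^2}(x^p)$ would need $p\mid 3$, so $g$ is separable.

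Next, fix degree-$1$ functions $\rho,\sigma\in\mathbb{F}_{q^2}(x)$ mapping $\mu_{q+1}$ to $\mathbb{P}^1(\mathbb{F}_q)$. These exist by Lemma \ref{lem4}, for instance $(\delta x+\delta^q)/(x+1)$ with $\delta\notin\mathbb{F}_q$. Note that $g(x)=x^3D^{(q)}(1/x)/D(x)$ has exactly the form required in Lemma \ref{lem6}. Hence $h=\rho\circ g\circ\sigma^{-1}$ lies in $\mathbb{F}_q(x)$ and permutes $\mathbb{P}^1(\mathbb{F}_q)$. Moreover $h$ is separable of degree $3$, because composing with degree-$1$ maps preserves both degree and separability.

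Since $q\not\equiv 0\pmod 3$, Lemma \ref{lem7} gives either case (1) or case (2). In both cases $h=\rho'\circ x^3\circ\sigma'$ for some degree-$1$ $\rho',\sigma'\in\mathbb{F}_{q^2}(x)$. Therefore $g=(\rho^{-1}\rho')\circ x^3\circ(\sigma'\sigma)$, so $g$ is linearly equivalent to $x^3$ over $\bar{\mathbb{F}}_q$.

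It remains to compute the ramification of $x^3$ and transport it to $g$.
\begin{itemize}
\item Since $p\ne 3$, the derivative of $x^3$ is nonzero at every point of $\bar{\mathbb{F}}_q^*$, so each such point has ramification index $1$.
\item The points $0$ and $\infty$ have ramification index $3$, with $E(0)=E(\infty)=[3]$.
\item Every other $\beta$ has three distinct preimages, so $E(\beta)=[1,1,1]$.
\end{itemize}
Thus $x^3$ has exactly two branch points, each with a unique preimage.

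Finally, for degree-$1$ $\mu,\nu$ we have $e_{\mu\circ f\circ\nu}(\alpha)=e_f(\nu(\alpha))$, because degree-$1$ maps are bijections of $\mathbb{P}^1(\bar{\mathbb{F}}_q)$ with ramification index $1$ everywhere. Ramification indices are multiplicative under composition, so the branch points of $g$ are the images of $0$ and $\infty$ under $\rho^{-1}\rho'$, each with a single preimage of index $3$. This proves the lemma.

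The step needing the most care is this last transport of ramification under linear equivalence over $\mathbb{F}_{q^2}$ rather than over $\mathbb{F}_q$. I would justify it directly from the definition of $e_g$ via lowest-degree terms after composing with degree-$1$ maps, as given in the preliminaries.
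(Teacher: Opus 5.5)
Your proof is correct and follows the same route as the paper. You use separability from $p\neq 3$, pass to $h=\rho\circ g\circ\sigma^{-1}\in\mathbb{F}_q(x)$ via Lemma~\ref{lem6}, identify $h$ with $x^3$ up to degree-$1$ maps over $\mathbb{F}_{q^2}$ via Lemma~\ref{lem7}, and transport the ramification data of $x^3$ (branch points $0,\infty$, each with $E=[3]$) back to $g$; along the way you supply details the paper only asserts, namely the existence of $\rho,\sigma$ from Lemma~\ref{lem4} and why ramification indices are unchanged by composition with degree-$1$ maps.
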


Now we want to use the above lemma to deduce some necessary conditions on the coefficients of $g(x)$.

First, assume that $g(x)$ has a branch point $\infty$, which has a unique preimage. Then in $\mathbb{P}^{1}(\bar{\mathbb{F}}_q)$, $D(x)$ can be factorized as $a(x-\alpha)^3$, where $\alpha$ is the unique preimage. Since $N(x)=\hat{D}(x)$, it follows from Lemma \ref{lem2} that $N(X)$ can be factorized as $d^q(x-\alpha^{-q})^3$. Hence, $g(x)=\frac{d^q(x-\alpha^{-q})^3}{a(x-\alpha)^3}$.  If $\alpha=\infty$, then $\alpha^{-q}=0$, which implies $g(x)=\frac{d^q}{a}x^3$, contradicting the fact that $g(x)$ is of the form $\frac{N(x)}{D(x)}=\frac{d^qx^3+c^qx^2+b^qx+a^q}{ax^3+bx^2+cx+d}$. Hence, $\alpha\in \bar{\mathbb{F}}_q$.
Since coprimality and the derivative of a polynomial are independent of field extensions, $D(x)$ has no repeated factors in $\mathbb{F}_{q^2}[x]$ if and only if it has no repeated factors in $\bar{\mathbb{F}}_q[x]$. Hence, $D(x)$ has repeated factors in $\mathbb{F}_{q^2}[x]$, which implies $\alpha\in \mathbb{F}_{q^2}$. Since $\text{deg}(g)=3$, we have $\alpha\not\in \mu_{q+1}$. Therefore, $g(x)$ can be written as $\frac{d^q}{a}x\circ x^3\circ \frac{x-\alpha^{-q}}{x-\alpha}$. Hence, $g(x)$ permutes $\mu_{q+1}$ if and only if $\frac{x-\alpha^{-q}}{x-\alpha}$, $x^3$, and $\frac{d^q}{a}$ permute $\mu_{q+1}$.  By Lemma \ref{lem3}, the rational function $\frac{x-\alpha^{-q}}{x-\alpha}$ permutes $\mu_{q+1}$ if and only if there exists $\lambda\in \mathbb{F}_{q^2}^{*}$ such that $-\lambda^q\alpha^q=\lambda$. This condition implies that $\alpha\in \mu_{q+1}$, yielding a contradiction.

Next, assume that $\beta\in \bar{\mathbb{F}}_q$ is a branch point of $g(x)$ with a unique preimage $\alpha$. This is equivalent to saying that $\alpha$ is a triple root of $F(x)=N(x)-\beta D(x)$. Since $\alpha$ is a triple root of $F(x)=N(x)-\beta D(x)$, it follows that $F(\alpha)=0$, $F'(\alpha)=0$ and $F''(\alpha)=0$. We now write out the explicit expressions for $F(x)$, $F'(x)$, and $F''(x)$:
\begin{eqnarray}\label{eqn24}
    F(x)=N(x)-\beta D(x)=(d^q-\beta a)x^3+(c^q-\beta b)x^2+(b^q-\beta c)x+(a^q-\beta d),
\end{eqnarray}
\begin{eqnarray}\label{eqn25}
    F'(x)=N'(x)-\beta D'(x)=3(d^q-\beta a)x^2+2(c^q-\beta b)x+(b^q-\beta c),
\end{eqnarray}
\begin{eqnarray}\label{eqn26}
    F''(x)=N''(x)-\beta D''(x)=6(d^q-\beta a)x+2(c^q-\beta b).
\end{eqnarray}

When the characteristic of finite field $\mathbb{F}_q$ is not $2$, since $F''(\alpha)=0$, it follows from Equation (\ref{eqn26}) that 
\begin{eqnarray}\label{eqn27}
    \alpha=\frac{-(c^q-\beta b)}{3(d^q-\beta a)}.
\end{eqnarray}

Given that $F'(\alpha)=0$, substituting Equation (\ref{eqn27}) into the expression $F'(\alpha)=0$, we have
\begin{eqnarray}\label{eqn28}
    (3ac-b^2)\beta^2+(2bc^q-3d^qc-3ab^q)\beta+3d^qb^q-c^{2q}=0.
\end{eqnarray}

From Equation (\ref{eqn28}), we conclude that any branch point $\beta$ must satisfy the following equation:

\begin{eqnarray}\label{eqn29}
    (3ac-b^2)x^2+(2bc^q-3d^qc-3ab^q)x+3d^qb^q-c^{2q}=0.
\end{eqnarray}

When the characteristic of finite field $\mathbb{F}_q$ is $2$, $F''(x)\equiv 0$. From Equation (\ref{eqn25}), we have
\begin{eqnarray}\label{eqn30}
\alpha^2=\frac{b^q-\beta c}{d^q-\beta a}.
\end{eqnarray}

Plugging Equation (\ref{eqn30}) into $F(\alpha)=0$, we have
\begin{eqnarray}\label{eqn31}
    (ad+bc)\beta^2+(a^{q+1}+b^{q+1}+c^{q+1}+d^{q+1})\beta+(a^qd^q+b^qc^q)=0.
\end{eqnarray}

From Equation (\ref{eqn30}), we conclude that any branch point $\beta$ must satisfy the following equation:

\begin{eqnarray}\label{eqn32}
     (ad+bc)x^2+(a^{q+1}+b^{q+1}+c^{q+1}+d^{q+1})x+(a^qd^q+b^qc^q)=0.
\end{eqnarray}

We have determined the algebraic equation satisfied by the branch points. The following lemma can help us determine the algebraic equation satisfied by the ramification points.

\begin{lem}\cite{ding2025class}\label{lem13}
Let $K$ be a field of characteristic $p\geq 0$, pick a nonconstant $g(x)\in K(x)$, and pick coprime $N,D\in K[x]$ with $g(x)=\frac{N(x)}{D(x)}$. Assume that $R(x):=N'(x)D(x)-N(x)D'(x)$ is nonzero. Pick $\alpha\in K$ for which $D(\alpha)\neq 0$ and write $\beta:=g(\alpha)$. Then the following hold:

(1) $\alpha$ is a root of $R(x)$ if and only if $e_g(\alpha)>1$.

(2) If $e_g(\alpha)>1$, then $e_R(\alpha)\geq e_g(\alpha)-1$ with equality holding if and only if $p\nmid e_g(\alpha)$.
\end{lem}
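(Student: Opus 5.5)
The statement to prove is Lemma \ref{lem13}, a standard local fact about ramification. I would prove it by translating ramification at a finite point into the multiplicity of a root of $N(x)-\beta D(x)$, and then differentiating.

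Since $D(\alpha)\neq 0$, $\beta=g(\alpha)$ is finite. Put $F(x):=N(x)-\beta D(x)$. The first step is to identify $e_g(\alpha)$ with $e_F(\alpha)$, the multiplicity of $\alpha$ as a root of $F$. Take $\sigma(x)=x+\alpha$ and $\rho(x)=x-\beta$ in the definition of ramification index. Then $\rho\circ g\circ\sigma=F(x+\alpha)/D(x+\alpha)$. Because $N$ and $D$ are coprime, $F$ and $D$ are also coprime, so this fraction is in lowest terms, and its denominator does not vanish at $0$. Hence the lowest-degree term of the numerator has degree equal to the order of vanishing of $F(x+\alpha)$ at $0$. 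This gives $e:=e_g(\alpha)=e_F(\alpha)\geq 1$.

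The second step rewrites $R$ in terms of $F$. Since $N=F+\beta D$ and $\beta$ is a constant,
\[
R=N'D-ND'=(F'+\beta D')D-(F+\beta D)D'=F'D-FD'.
\]

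Now write $F(x)=(x-\alpha)^eU(x)$ with $U(\alpha)\neq 0$. Differentiating gives $F'=(x-\alpha)^{e-1}\bigl(eU+(x-\alpha)U'\bigr)$, and therefore
\[
R=(x-\alpha)^{e-1}\Bigl(eUD+(x-\alpha)\bigl(U'D-UD'\bigr)\Bigr).
\]
Let $W$ denote the bracketed factor. Evaluating at $\alpha$ gives $W(\alpha)=e\,U(\alpha)D(\alpha)$, where $U(\alpha)D(\alpha)\neq 0$.

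Part (2) follows directly. If $e>1$, then $(x-\alpha)^{e-1}$ divides $R$, so $e_R(\alpha)\geq e-1$. Since $R\neq 0$, the multiplicity is finite, and equality holds exactly when $W(\alpha)\neq 0$. That happens exactly when $e\neq 0$ in $K$, i.e. when $p\nmid e$ (with $p=0$ meaning no restriction).

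Part (1) needs one more observation. If $e=1$, then $R(\alpha)=W(\alpha)=U(\alpha)D(\alpha)\neq 0$, so $\alpha$ is not a root of $R$. If $e>1$, the factor $(x-\alpha)^{e-1}$ has positive degree and vanishes at $\alpha$, so $R(\alpha)=0$. This holds even when $p\mid e$, because we still have $e-1\geq 1$. Hence $R(\alpha)=0$ if and only if $e_g(\alpha)>1$.

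The only delicate point is the first step: justifying that $e_g(\alpha)$, as defined via the lowest-degree term of the numerator of $\rho\circ g\circ\sigma$, equals the root multiplicity of $F$ at $\alpha$. This requires coprimality of the reduced form and $D(\alpha)\neq 0$, so that no cancellation or contribution from the denominator occurs. Once that is in place, the rest is a direct computation.
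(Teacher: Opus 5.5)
The paper gives no proof of this lemma; it only cites \cite{ding2025class}. Your argument is correct and complete, and it is the standard proof: identify $e_g(\alpha)$ with the multiplicity of $\alpha$ as a root of $F=N-\beta D$, rewrite $R=F'D-FD'$, and factor $R=(x-\alpha)^{e-1}W$ with $W(\alpha)=e\,U(\alpha)D(\alpha)$. One small point is worth stating explicitly: $F\neq 0$ because $g$ is nonconstant, so $e$ is finite and the factorization $F=(x-\alpha)^eU$ exists.
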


Since in our case $\alpha$ is the unique preimage of $\beta$, we have $e_g(\alpha)=3$. From Lemma (\ref{lem13}) (2), we know that $e_R(\alpha)=2$, and from Lemma (\ref{lem13}) (1), we know that only a ramification point of $g(x)$ can be a root of $R(x)$.

Hence we have the following factorization:
\begin{eqnarray}\label{eqn33}
    R(x)=C(x^2+Ax+B)^2,
\end{eqnarray}

where $R(x):=N'(x)D(x)-N(x)D'(x)$,  which equals 
\begin{dmath}\label{eqn34}
    (bd^q-ac^q)x^4+(2cd^q-2ab^q)x^3+(c^{q+1}-b^{q+1}+3d^{q+1}-3a^{q+1})x^2+(2c^qd-2a^qb)x+(b^qd-a^qc).
\end{dmath}

From Equation (\ref{eqn33}), we know that 
\begin{eqnarray}\label{eqn35}
    A=\frac{cd^q-ab^q}{bd^q-ac^q},
\end{eqnarray}
and 
\begin{eqnarray}\label{eqn36}
    B=\frac{c^qd-a^qb}{cd^q-ab^q},
\end{eqnarray}
provided that $bd^q\neq ac^q$ and $cd^q\neq ab^q$.

If $bd^q\neq ac^q$ and $cd^q=ab^q$, then according to Equation (\ref{eqn33}), we have $A=0$ and 
\begin{eqnarray}\label{eqn37}
    B=\frac{c^{q+1}-b^{q+1}+3d^{q+1}-3a^{q+1}}{2(bd^q-ac^q)}.
\end{eqnarray}

From the factorization of $R(x)$, we can derive some necessary conditions for $g(x)$ to permute $\mu_{q+1}$.

\begin{pro}\label{pro1}
    If $g(x)=\frac{N(x)}{D(x)}=\frac{d^qx^3+c^qx^2+b^qx+a^q}{ax^3+bx^2+cx+d}$ permutes $\mu_{q+1}$ and $\text{deg}(g)=3$, then $bd^q\neq ac^q$, and one of the following statements holds:

    (i) $cd^q\neq ab^q$, $A^2+2B=\frac{c^{q+1}-b^{q+1}+3d^{q+1}-3a^{q+1}}{bd^q-ac^q}$ and $B^2=\frac{b^qd-a^qc}{bd^q-ac^q}$;

    (ii) $cd^q=ab^q$ and $B^2=\frac{b^qd-a^qc}{bd^q-ac^q}$, $A=0$,

    where $A$ and $B$ are defined as above.
\end{pro}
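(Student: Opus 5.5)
The plan is to derive everything from the factorization (\ref{eqn33}) of $R(x)=N'(x)D(x)-N(x)D'(x)$, justified by Lemma \ref{lem12} and Lemma \ref{lem13}, and then compare coefficients with the explicit expansion (\ref{eqn34}). Assume $g$ permutes $\mu_{q+1}$ with $\deg(g)=3$. Since $g$ is a permutation, we are in Case 1 ($3\nmid q$), where $g$ is separable. Then $R(x)\neq 0$, and by Lemma \ref{lem12} $g$ has exactly two branch points, each with a unique preimage, so each such preimage $\alpha$ has $e_g(\alpha)=3$.

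\textbf{Step 1: locate the ramification.}
1. The argument preceding the proposition already shows that $\infty$ is not a branch point with a unique preimage.
2. One also checks that $\infty$ is not a ramification point. If $e_g(\infty)=3$, the same SCR argument would force $D$ to have degree $\le 0$ or $g$ to be a monomial, which is excluded.
3. Hence both ramification points $\alpha_1,\alpha_2$ are finite.
4. We may choose them with $D(\alpha_i)\neq 0$. Otherwise $\infty$ would be the branch point, which is excluded.
5. Lemma \ref{lem13} then gives $e_R(\alpha_i)=2$, since $p\nmid 3$, and shows that $R$ has no other roots.
6. The leading term of $R$ is $(bd^q-ac^q)x^4$. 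If $bd^q=ac^q$, then $\deg R<4$. In that case $\infty$ is a ramification point: the ramification at infinity is measured by $4-\deg R$, via the Riemann–Hurwitz count $\sum(e-1)=4$ for separable degree $3$ maps. This contradicts step 2.
7. Therefore $bd^q\neq ac^q$, and $R(x)=(bd^q-ac^q)(x-\alpha_1)^2(x-\alpha_2)^2=C(x^2+Ax+B)^2$ with $C=bd^q-ac^q$.

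\textbf{Step 2: compare coefficients.} Expanding $C(x^2+Ax+B)^2=C\bigl(x^4+2Ax^3+(A^2+2B)x^2+2ABx+B^2\bigr)$ and matching with (\ref{eqn34}) gives
\begin{itemize}
\item $2A=\frac{2(cd^q-ab^q)}{bd^q-ac^q}$,
\item $A^2+2B=\frac{c^{q+1}-b^{q+1}+3d^{q+1}-3a^{q+1}}{bd^q-ac^q}$,
\item $2AB=\frac{2(c^qd-a^qb)}{bd^q-ac^q}$,
\item $B^2=\frac{b^qd-a^qc}{bd^q-ac^q}$.
\end{itemize}

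If $cd^q\neq ab^q$, then $A\neq 0$, and dividing recovers (\ref{eqn35}) and (\ref{eqn36}). The $x^2$ and constant coefficients then give exactly condition (i).

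If $cd^q=ab^q$, then $A=0$, and the constant coefficient gives $B^2=\frac{b^qd-a^qc}{bd^q-ac^q}$, which is condition (ii).

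\textbf{Characteristic 2.} Here the factors of $2$ vanish. Comparing the $x^3$ coefficient gives no information, yet (\ref{eqn34}) still has $2(cd^q-ab^q)=0$. So I would read the identities with $2=0$, where they remain formally valid but are weaker, and note that the stated conditions are still the coefficient identities. I expect this bookkeeping to be routine.

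\textbf{Main obstacle.} The delicate step is Step 1: excluding ramification at $\infty$ rigorously and showing that $\deg R=4$ forces $bd^q\neq ac^q$. I would handle it first by the Riemann–Hurwitz/degree argument: for separable $g$ of degree $3$, the total ramification $\sum(e_g-1)$ equals $4$, and the finite contribution is $\deg R$. Failing that, I would conjugate by a degree-one map sending a non-ramified point to $\infty$, and use that $\infty\mapsto d^q/a$ with the SCR symmetry $\alpha\mapsto\alpha^{-q}$ pairing the ramification points $0$ and $\infty$.
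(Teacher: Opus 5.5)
You follow the paper's route. Its entire proof is the coefficient comparison of (\ref{eqn33}) with (\ref{eqn34}), which is your Step~2. Your Step~1 tries to justify what the paper leaves implicit, namely that $R=C(x^2+Ax+B)^2$ with $C=bd^q-ac^q\neq 0$.

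\textbf{The gap at Step~1, item~2.} That justification fails at item~2, and it cannot be repaired, because the conclusion $bd^q\neq ac^q$ is false exactly there. For $\deg(g)=3$ one checks directly that $e_g(\infty)=3$ if and only if $b=c=0$:
\begin{itemize}
\item if $a\neq 0$, then $N-\frac{d^q}{a}D$ must be constant, i.e. $ac^q=bd^q$ and $ab^q=cd^q$; together with $a^{q+1}\neq d^{q+1}$ (forced by $\deg(g)=3$) this gives $b=c=0$;
\item if $a=0$, then $D$ must be constant.
\end{itemize}
In that case $g=\frac{d^qx+a^q}{ax+d}\circ x^3$ with $a^{q+1}\neq d^{q+1}$. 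This $g$ has degree $3$ by Theorem~\ref{thm1}, and it permutes $\mu_{q+1}$ whenever $q\equiv 1\pmod{3}$, by Lemma~\ref{lem3} and $\gcd(3,q+1)=1$. Yet $bd^q=ac^q=0$ and $R(x)=3(d^{q+1}-a^{q+1})x^2$.

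The simplest instance is $(a,b,c,d)=(0,0,0,1)$, i.e. $g(x)=x^3$, with $q=4$ or $q=7$. Nothing excludes these cases. The choice $a=b=c=0$ is admissible, so the paper's dismissal of $g=\frac{d^q}{a}x^3$ before Proposition~\ref{pro1} is mistaken. For $ad\neq 0$ the map $g$ is not even a monomial. Your item~6 is correct: $bd^q=ac^q$ means $\infty$ is ramified. So what your Steps~1--2 can establish is the corrected statement: either $b=c=0$, or $bd^q\neq ac^q$ and (i) or (ii) holds. The paper's one-line proof has the same hole.

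\textbf{Secondary issues.} Item~4 leans on the paper's claim that a branch point $\infty$ with a unique finite preimage is impossible. That claim is also unsound: for $D=a(x-\alpha)^3$ with $\alpha\in\mathbb{F}_{q^2}^{*}\setminus\mu_{q+1}$, $g$ is a constant times $\bigl(\frac{x-\alpha^{-q}}{x-\alpha}\bigr)^3$, and it permutes $\mu_{q+1}$ when $q\equiv 1\pmod{3}$. You do not need the claim, though: at a root of $D$, apply Lemma~\ref{lem13} to $D/N$, whose associated polynomial is $-R$.

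Separately, in characteristic $2$ the $x^3$- and $x$-coefficient comparisons are vacuous. So nothing ties the true quadratic factor to the values (\ref{eqn35})--(\ref{eqn36}) that define $A$ and $B$ in the statement. Condition~(i) is then a genuine claim, not a ``formally valid, weaker'' identity. One fix: write $g=\rho\circ x^3\circ\sigma$ with $\sigma=\frac{s_1x+s_2}{s_3x+s_4}$ (Lemma~\ref{lem7}). Then $R$ equals a constant times $(s_1x+s_2)^2(s_3x+s_4)^2$, by an identity with integer coefficients in the parameters of $\rho$ and $\sigma$. Hence the $x^3$- and $x$-coefficient identities may be divided by $2$ before reducing modulo $2$.

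Finally, ``since $g$ is a permutation, we are in Case~1'' is a non sequitur. The condition $3\nmid q$ is the standing hypothesis of that section.
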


\begin{proof}
    By expanding the right side of Equation (\ref{eqn33}) and comparing the coefficients, we obtain the result.
\end{proof}

Proposition \ref{pro1} only tells us that when $g(x)$ has two branch points, each with a unique preimage, this is not sufficient to ensure that $g(x)$ permutes $\mu_{q+1}$. The following lemma tells us how much further we need to go.

\begin{lem}\label{lem14}\cite{ding2023determination}
    Let $q$ be a power of a prime $p$, and assume that $g(x):=x^rA^{(q)}(1/x)/A(x)\in \mathbb{F}_{q^2}(x)$ has degree $n\geq 1$, where $r\in \mathbb{Z}$ and $A(x)\in \mathbb{F}_{q^2}[x]$ has no roots in $\mu_{q+1}$. Suppose there exist distinct $\beta_1$, $\beta_2\in \mathbb{P}^{1}(\bar{\mathbb{F}}_q)$ such that $\beta_i$ has a unique $g$-preimage $\alpha_i\in \mathbb{P}^{1}(\bar{\mathbb{F}}_q)$ for each $i$. Then the following are equivalent:

    (1) $g(x)$ permutes $\mu_{q+1}$.

    (2) At least one of the following holds:

    $\text{gcd}(n,q-1)=1$ and at least one $\alpha_i$ is in $\mu_{q+1}$;

    $\text{gcd}(n,q+1)=1$ and at least one $\alpha_i$ is not in $\mu_{q+1}$.

    (3) At least one of the following holds:

    $\text{gcd}(n,q-1)=1$ and at least one $\beta_i$ is in $\mu_{q+1}$;

    $\text{gcd}(n,q+1)=1$ and at least one $\beta_i$ is not in $\mu_{q+1}$.

    (4) At least one of the following holds:

    $\text{gcd}(n,q-1)=1$ and $g(x)=\rho^{-1}\circ x^n\circ \sigma$ for some degree-$1$ $\rho$, $\sigma\in \mathbb{F}_{q^2}(x)$ which map $\mu_{q+1}$ to $\mathbb{P}^{1}(\mathbb{F}_q)$;

    $\text{gcd}(n,q+1)=1$ and $g(x)=\rho^{-1}\circ x^n\circ \sigma$ for some degree-$1$ $\rho$, $\sigma\in \mathbb{F}_{q^2}(x)$ which permute $\mu_{q+1}$.

    Moreover, if $g(x)=g_1(x^{p^l})$ where $l\geq 0$ and $g_1(x)\in \mathbb{F}_{q^2}(x)\setminus \mathbb{F}_{q^2}(x^p)$ has degree at least $2$, then in (4) we may require in addition that $\sigma(\alpha_1)=\infty=\rho(\beta_1)$ and $\sigma(\alpha_2)=0=\rho(\beta_2)$.
\end{lem}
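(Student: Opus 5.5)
The plan is to exploit the symmetry built into $g(x)=x^rA^{(q)}(1/x)/A(x)$. Let $\iota(x)=1/x^q$ on $\mathbb{P}^1(\bar{\mathbb{F}}_q)$. A direct computation gives $g(\iota(x))=\iota(g(x))$. Moreover, the fixed points of $\iota$ are exactly $\mu_{q+1}$. Ramification indices are preserved under this conjugation, since $\iota$ is a bijection of $\mathbb{P}^1(\bar{\mathbb{F}}_q)$ compatible with multiplicities; this is essentially Lemma \ref{lem2}(3). Hence $\beta$ has a unique $g$-preimage $\alpha$ if and only if $\iota(\beta)$ has the unique preimage $\iota(\alpha)$.

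First I would write $g=g_1(x^{p^l})$ with $g_1$ separable of degree $n/p^l\ge 2$. A point with unique preimage for $g$ gives one for $g_1$, and Riemann--Hurwitz for $g_1$ then shows that at most two points are totally ramified. I expect a totally ramified point with wild ramification to need separate care here. Granting this, $\{\beta_1,\beta_2\}$ is $\iota$-stable, and so is $\{\alpha_1,\alpha_2\}$, with $\iota$ acting compatibly on the two sets through $g$. This leaves exactly two configurations:
\begin{itemize}
\item[(a)] both $\alpha_i\in\mu_{q+1}$, hence both $\beta_i=g(\alpha_i)\in\mu_{q+1}$;
\item[(b)] $\alpha_2=\iota(\alpha_1)\ne\alpha_1$ and $\beta_2=\iota(\beta_1)\ne\beta_1$, so neither $\alpha_i$ nor $\beta_i$ lies in $\mu_{q+1}$.
\end{itemize}
This gives (2)$\Leftrightarrow$(3) immediately, because "some $\alpha_i\in\mu_{q+1}$" and "some $\beta_i\in\mu_{q+1}$" both mean configuration (a).

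Next I would normalize. Choose degree-one $\sigma,\rho$ with $\sigma(\alpha_1)=\infty=\rho(\beta_1)$ and $\sigma(\alpha_2)=0=\rho(\beta_2)$. Then $\rho\circ g\circ\sigma^{-1}$ has $\infty$ and $0$ totally ramified over $\infty$ and $0$, so it is $cx^n$.
\begin{itemize}
\item In configuration (a), the points $\alpha_i,\beta_i$ lie in $\mu_{q+1}\subset\mathbb{F}_{q^2}$. By Lemma \ref{lem4}, composed with elements of $PGL_2(\mathbb{F}_q)$, we may take $\sigma,\rho$ mapping $\mu_{q+1}$ to $\mathbb{P}^1(\mathbb{F}_q)$ with the prescribed values. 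By Lemma \ref{lem6} the resulting $cx^n$ lies in $\mathbb{F}_q(x)$, so $c\in\mathbb{F}_q^*$. Absorbing $c$ into $\rho$ gives the first alternative of (4).
\item In configuration (b), take $\sigma=\lambda(x-\alpha_2)/(x-\alpha_1)$ with $\alpha_2=\alpha_1^{-q}$ and suitable $\lambda$. By Lemma \ref{lem3} this $\sigma$ permutes $\mu_{q+1}$, and similarly for $\rho$. Since $g$ maps $\mu_{q+1}$ into itself, $c\in\mu_{q+1}$ and can be absorbed into $\rho$. This gives the second alternative of (4), and the "Moreover" normalization comes for free from this construction.
\end{itemize}

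Then I would close the loop (1)$\Leftrightarrow$(4). In configuration (a), Lemma \ref{lem6} reduces the question to whether $x^n$ permutes $\mathbb{P}^1(\mathbb{F}_q)$, which holds if and only if $\gcd(n,q-1)=1$. In configuration (b), $g$ permutes $\mu_{q+1}$ if and only if $x^n$ permutes $\mu_{q+1}$, which holds if and only if $\gcd(n,q+1)=1$. Since exactly one configuration occurs, (1) is equivalent to the disjunction in (2).

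For (4)$\Rightarrow$(1) the argument is direct: either $\sigma$ and $\rho$ permute $\mu_{q+1}$ and $x^n$ does, or they carry $\mu_{q+1}$ onto $\mathbb{P}^1(\mathbb{F}_q)$ and $x^n$ permutes $\mathbb{P}^1(\mathbb{F}_q)$. The hypothesis that $A$ has no roots on $\mu_{q+1}$ is used only to guarantee, via Lemma \ref{lem5}, that $g$ maps $\mu_{q+1}$ into $\mu_{q+1}$.

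I expect the main obstacle to be the exhaustiveness step: showing that $\{\alpha_1,\alpha_2\}$ is genuinely $\iota$-stable, so that no third totally ramified point exists. This is delicate when $p\mid n$ and the ramification is wild. I would handle it by passing to the separable factor $g_1$ and applying Riemann--Hurwitz there with the different exponent bounded below by $e-1$. Failing that, I would use the hypothesis that $\beta_1,\beta_2$ are given and argue directly that $\iota(\beta_1)\in\{\beta_1,\beta_2\}$ by counting the total ramification contribution.
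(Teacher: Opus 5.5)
The paper does not prove this lemma; it imports it from the cited work of Ding and Zieve, so I am judging your argument on its own terms. For $\deg g_1\ge 2$ it is sound, and the following steps all go through, including the ``Moreover'' clause:
\begin{itemize}
\item the identity $g\circ\iota=\iota\circ g$;
\item the fact that the fixed points of $\iota$ are exactly $\mu_{q+1}$;
\item the dichotomy (a)/(b);
\item the normalization to $cx^n$, with $c\in\mathbb{F}_q^*$ in case (a) and $c\in\mu_{q+1}$ in case (b);
\item the resulting cycle of implications.
\end{itemize}
The step you flagged as the main obstacle is not one. A point has a unique $g$-preimage if and only if it has a unique $g_1$-preimage, because $x^{p^l}$ is a bijection of $\mathbb{P}^1(\bar{\mathbb{F}}_q)$. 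For separable $g_1$ of degree $m\ge 2$, Riemann--Hurwitz gives $\sum_P d_P=2m-2$ with $d_P\ge e_P-1$ always; wild ramification only makes $d_P$ larger. So three totally ramified points would force $3(m-1)\le 2m-2$, i.e.\ $m\le 1$.

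The genuine gap is that you assume $\deg g_1=n/p^l\ge 2$. The equivalence of (1)--(4) is asserted for every $n\ge 1$; only the ``Moreover'' clause carries that hypothesis.
\begin{itemize}
\item \emph{Where your argument fails.} When $n=p^l$ (including $n=1$), every point of $\mathbb{P}^1(\bar{\mathbb{F}}_q)$ has a unique preimage. So $\beta_1,\beta_2$ are arbitrary, and $\{\beta_1,\beta_2\}$ need not be $\iota$-stable. For instance, take $A=1$ and $r=p^l$, so $g=x^{p^l}$, and choose $\beta_1=1$, $\beta_2=0$; then $\iota(0)=\infty\notin\{0,1\}$. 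Neither configuration (a) nor (b) occurs, so your justifications of (2)$\Leftrightarrow$(3) and (1)$\Leftrightarrow$(2) break down.
\item \emph{How to repair it.} The lemma is still true here, by a separate argument. Since $\gcd(p^l,q\pm 1)=1$, conditions (2) and (3) hold trivially: either $\alpha_1\in\mu_{q+1}$ or not, and likewise for $\beta_1$. The map $g$ is a bijection of $\mathbb{P}^1(\bar{\mathbb{F}}_q)$ that sends $\mu_{q+1}$ into itself (Lemma \ref{lem5}), so (1) holds. Finally, $g_1$ permutes $\mu_{q+1}$ because $g$ and $x^{p^l}$ do, so (4) holds with $\sigma=x$ and $\rho=g_1^{-1}$.
\end{itemize}
You need to add this case explicitly.

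A smaller point concerns rationality in configuration (b). The map $\iota$ is not an involution on $\bar{\mathbb{F}}_q$, since $\iota\circ\iota=x^{q^2}$. You should therefore note that $\iota$ swaps $\alpha_1$ and $\alpha_2$, hence $\alpha_i^{q^2}=\alpha_i$ and $\alpha_i\in\mathbb{P}^1(\mathbb{F}_{q^2})$; the same holds for the $\beta_i$. This is what makes $\sigma,\rho\in\mathbb{F}_{q^2}(x)$, so that Lemma \ref{lem3} applies. The cases $\alpha_1\in\{0,\infty\}$ are handled by $\sigma=\lambda x$ or $\sigma=\lambda/x$ with $\lambda\in\mu_{q+1}$. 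In configuration (a), rationality is automatic since $\mu_{q+1}\subset\mathbb{F}_{q^2}$.
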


According to Lemma \ref{lem14}, we aim to determine whether any of the $\beta_i$ lie in $\mu_{q+1}$, where the $\beta_i$ satisfy Equation (\ref{eqn29}) when the characteristic of $\mathbb{F}_q$ is not $2$, and Equation (\ref{eqn32}) when the characteristic of $\mathbb{F}_q$ is $2$.

We have the following lemma to determine whether a quadratic equation has a solution in $\mu_{q+1}$.

\begin{lem}\label{lem15}
    Let $L(x)=x^2+Ax+B\in \mathbb{F}_{q^2}[x]$, where $B\neq 0$. Then $L(x)$ has a solution in $\mu_{q+1}$ if and only if the following equation system has solutions:
    \begin{eqnarray}\notag
           \begin{cases}
        L(x)=x^{2}+Ax+B\\
        M(x)=Bx^q+x+A.
    \end{cases}
    \end{eqnarray}
 
\end{lem}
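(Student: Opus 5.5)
The argument rests on one elementary fact: for $x\in\mathbb{F}_{q^2}^{*}$ we have $x\in\mu_{q+1}$ if and only if $x^q=x^{-1}$. The idea is to show that, for $x\in\mu_{q+1}$, the equation $L(x)=0$ is equivalent to $M(x)=0$ up to multiplication by a unit. We prove the two directions separately, reading ``has solutions'' as ``has a common root in $\mathbb{F}_{q^2}$''. For the converse we will also need the common root to lie in $\mu_{q+1}$. The statement is therefore best read as requiring a common solution $x\in\mathbb{F}_{q^2}$ with $x^{q+1}=1$; that case goes through verbatim. If that reading is too restrictive, we instead extract the condition $x^{q+1}=1$ from the two equations as described below.

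\emph{Forward direction.} Suppose $x_0\in\mu_{q+1}$ satisfies $L(x_0)=0$. Then $x_0\neq 0$ and $x_0^q=x_0^{-1}$. Hence
\[
x_0 M(x_0)=Bx_0^{q+1}+x_0^2+Ax_0=x_0^2+Ax_0+B=L(x_0)=0 .
\]
Since $x_0\neq 0$, we get $M(x_0)=0$, so $x_0$ is a common solution of the system.

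\emph{Converse.} Let $x_0\in\mathbb{F}_{q^2}$ be a common root of $L$ and $M$. Because $B\neq 0$, the relation $L(x_0)=0$ forces $x_0\neq 0$. Subtracting $x_0M(x_0)=0$ from $L(x_0)=0$ gives
\[
B(1-x_0^{q+1})=0 .
\]
As $B\neq 0$, it follows that $x_0^{q+1}=1$, i.e.\ $x_0\in\mu_{q+1}$, and $x_0$ is the required root of $L$.

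The only genuinely delicate point is pinning down what ``the system has solutions'' means: the solutions must lie in $\mathbb{F}_{q^2}$, where $x^q$ makes sense as the Frobenius, rather than in $\bar{\mathbb{F}}_q$. I would state this explicitly at the start of the proof. If one wants to allow roots in $\bar{\mathbb{F}}_q$, one could first eliminate $x^q$ using $M$, but I expect that unnecessary. Once the domain is fixed, the subtraction $L-xM$ does all the work.
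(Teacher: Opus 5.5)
Your proof is correct and follows the paper's two-direction strategy. In the converse, your combination $L(x)-xM(x)=B(1-x^{q+1})$ is actually cleaner than the paper's $x^qL(x)-M(x)=(x+A)(x^{q+1}-1)$, because it finishes at once from $B\neq 0$ and does not need the paper's separate exclusion of $x_0=-A$. The domain hedging in your first paragraph is unnecessary, and the ``restrictive reading'' you propose would make the converse trivial. Since your identity holds in $\mathbb{F}_{q^2}[x]$, any common root, even one taken in $\bar{\mathbb{F}}_q$, satisfies $x_0^{q+1}=1$ and therefore lies in $\mu_{q+1}\subset\mathbb{F}_{q^2}$.
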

\begin{proof}
    If $L(x)$ has a solution $\alpha$ in $\mu_{q+1}$, then $\alpha^2+A\alpha+B=0$. Multiply both sides by $\alpha^{q}$, we have $\alpha+A+B\alpha^q=0$, which implies that $\alpha$ is also a solution of $M(x)$.

    Conversely, suppose $L(x)$ and $M(x)$ have a common solution $\alpha$. We will show $\alpha\in \mu_{q+1}$. Multiply both sides of $L(x)$ by $\alpha^q$ and subtract $M(x)$, we have $(A+\alpha)(\alpha^{q+1}-1)=0$. If $A+\alpha=0$, then $A=\alpha=0$ since $B\neq 0$. Substituting $A=\alpha=0$ into $L(x)$ gives $B=0$, which is a contradiction. Hence $A+\alpha\neq 0$, and therefore $\alpha\in \mu_{q+1}$.
\end{proof}

From Lemma \ref{lem15}, we can determine whether any of the $\beta_i$ lie in $\mu_{q+1}$. Hence we obtain a necessary and sufficient condition for $g(x)$ permutes $\mu_{q+1}$ when the characteristic of $\mathbb{F}_q$ is not $3$.

\begin{thm}\label{thm2}
    Let $g(x)=\frac{N(x)}{D(x)}=\frac{d^qx^3+c^qx^2+b^qx+a^q}{ax^3+bx^2+cx+d}\in \mathbb{F}_{q^2}(x)$. Assume that $\text{deg}(g)=3$. When the characteristic of $\mathbb{F}_q$ is $2$, the function $g(x)$ permutes $\mu_{q+1}$ if and only if Proposition \ref{pro1} holds and either
    
    (1) $\text{gcd}(3,q-1)=1$ and Equation (\ref{eqn32}) has a root in $\mu_{q+1}$, or 
    
    (2) $\text{gcd}(3,q+1)=1$ and Equation (\ref{eqn32}) has no roots in $\mu_{q+1}$. 
    
    When the characteristic of $\mathbb{F}_q$ is neither $2$ or $3$, the function $g(x)$ permutes $\mu_{q+1}$ if and only if Proposition \ref{pro1} holds and either 
    
    (1) $\text{gcd}(3,q-1)=1$ and Equation (\ref{eqn29}) has a root in $\mu_{q+1}$, or 
    
    (2) $\text{gcd}(3,q+1)=1$ and Equation (\ref{eqn29}) has no roots in $\mu_{q+1}$.
\end{thm}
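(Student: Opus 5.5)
The proof assembles Lemma \ref{lem12}, Proposition \ref{pro1}, the branch-point equations, and Lemma \ref{lem14}. Throughout, $\deg(g)=3$ and $3\nmid q$, so $g\notin\mathbb{F}_{q^2}(x^p)$ and $g$ is separable.

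\emph{Necessity.} Suppose $g$ permutes $\mu_{q+1}$.
\begin{enumerate}
\item By Lemma \ref{lem12}, $g$ has exactly two branch points $\beta_1,\beta_2$, each with a unique preimage $\alpha_i$. Hence $e_g(\alpha_i)=3$.
\item The argument before Proposition \ref{pro1} shows $\infty$ cannot be such a branch point. So $\beta_1,\beta_2\in\bar{\mathbb{F}}_q$.
\item For the same reason $D(\alpha_i)\neq 0$, so Lemma \ref{lem13} applies. Since $p\nmid 3$, we get $e_R(\alpha_i)=2$. Ramification points are exactly the roots of $R$, and $\deg R\le 4$. Therefore $R=C(x-\alpha_1)^2(x-\alpha_2)^2$.
\item Comparing coefficients gives Proposition \ref{pro1}, including $bd^q\neq ac^q$. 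If $bd^q=ac^q$, then $\deg R<4$. One preimage would then have to be $\infty$, which was excluded.
\item The derivation of (\ref{eqn29}) (odd characteristic, from $F''(\alpha)=F'(\alpha)=0$) or of (\ref{eqn32}) (characteristic $2$, from $F'(\alpha)=F(\alpha)=0$) shows that every $\beta_i$ is a root of the relevant quadratic.
\end{enumerate}
The key point is that $\beta_1,\beta_2$ are exactly the two roots of that quadratic, so "the quadratic has a root in $\mu_{q+1}$" is the same as "some $\beta_i\in\mu_{q+1}$". Lemma \ref{lem14} (1)$\Leftrightarrow$(3) with $n=3$ then gives the dichotomy. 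Its hypothesis that $D$ has no roots in $\mu_{q+1}$ holds because $\deg g=3$ forces $\gcd(N,D)=1$. A root $\alpha\in\mu_{q+1}$ of $D$ would satisfy $\alpha^{-q}=\alpha$ and hence be a root of $N=\hat D$ by Lemma \ref{lem2}.

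\emph{Sufficiency.} Conversely, assume Proposition \ref{pro1} holds. Then $R$ has the shape $C(x^2+Ax+B)^2$.
\begin{enumerate}
\item If $x^2+Ax+B$ has distinct roots $\alpha_1,\alpha_2$, each is a root of $R$ of multiplicity $2$.
\item Over $\bar{\mathbb{F}}_q$ the Riemann--Hurwitz count for a tame separable degree-$3$ map is $\sum(e-1)=4$. Since each $e_g\le 3$ and $e_R(\alpha)=e_g(\alpha)-1$, each $\alpha_i$ has $e_g(\alpha_i)=3$. The points $\infty$ and the roots of $D$ cannot contribute, because the total is already exhausted.
\item Thus $\beta_i=g(\alpha_i)$ are two distinct branch points with unique preimages.
\item They are distinct because a degree-$3$ fibre cannot contain two points of index $3$.
\item Each $\beta_i$ is a root of the quadratic. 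If the quadratic is nondegenerate, its roots are exactly $\{\beta_1,\beta_2\}$.
\item Lemma \ref{lem14} (3)$\Rightarrow$(1) then yields the permutation property.
\end{enumerate}

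\emph{Main obstacle.} The hard part is the degenerate cases.
\begin{itemize}
\item If $x^2+Ax+B$ has a double root, then $R$ has a single root of multiplicity $4$. This contradicts Riemann--Hurwitz, so this case cannot occur. I would verify it directly: it would force $e_g=5>3$.
\item The quadratic (\ref{eqn29}) or (\ref{eqn32}) might have a vanishing leading coefficient, or vanish identically. Then it would not pin down $\{\beta_1,\beta_2\}$.
\end{itemize}
To handle the second case I would first try to show that a vanishing leading coefficient corresponds to $\infty$ being a branch point, which is already excluded. If that fails, I would use the identity that the quadratic is proportional to $\prod(x-\beta_i)$, derived by eliminating $\alpha$ symmetrically. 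I expect this last identification to need the most care.
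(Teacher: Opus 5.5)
Your route is the paper's own: Lemma~\ref{lem12}, the exclusion of $\infty$, Lemma~\ref{lem13} to get $R=C(x^2+Ax+B)^2$, Proposition~\ref{pro1}, the branch-point quadratic, and Lemma~\ref{lem14} (1)$\Leftrightarrow$(3). It fails where the paper's argument fails. In necessity steps 2 and 4 you use that $\infty$ cannot be a branch point with a unique preimage. That is false, and so is the statement itself. Take $a=b=c=0$, $d=1$, so $g=x^3$. By Theorem~\ref{thm1} it has degree $3$, and its branch points $0,\infty$ have unique preimages $0,\infty$. Since $bd^q=ac^q=0$, Proposition~\ref{pro1} fails, yet $x^3$ permutes $\mu_{q+1}$ whenever $\gcd(3,q+1)=1$ (e.g.\ $q=4$ or $q=7$). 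More generally, for $a^{q+1}\neq d^{q+1}$ the function $(d^qx^3+a^q)/(ax^3+d)$ equals $f(x^3)$, where $f$ is a degree-one permutation of $\mu_{q+1}$ by Lemma~\ref{lem3}; so it permutes $\mu_{q+1}$ iff $\gcd(3,q+1)=1$. Hence the ``only if'' direction cannot be proved, and the case $b=c=0$ must be added to the statement. The paper's exclusion argument, which you cite, is wrong in two ways. For $\alpha=\infty$ one gets $g=d^{q-1}x^3$, which is of the required form; the expression $(d^q/a)x^3$ makes no sense since $a=0$. For finite $\alpha$, the composite can permute $\mu_{q+1}$ even though $(x-\alpha^{-q})/(x-\alpha)$ does not, because that factor maps $\mu_{q+1}$ onto the coset $\{y:y^{q+1}=\alpha^{-(q+1)}\}$.

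That finite case also defeats your plan for the degenerate quadratic. For $\alpha\in\mathbb{F}_{q^2}\setminus(\mu_{q+1}\cup\{0\})$ put $D=a(x-\alpha)^3$, so $g=(d^q/a)\bigl((x-\alpha^{-q})/(x-\alpha)\bigr)^3$. Then $bd^q-ac^q=3a^{q+1}\alpha^{2q}(\alpha^{q+1}-1)\neq 0$, Proposition~\ref{pro1} holds, and $3ac-b^2=0$. The branch-point quadratic collapses to a linear equation with root $0$, and $g$ permutes $\mu_{q+1}$ iff $\gcd(3,q+1)=1$. So a vanishing leading coefficient does mean $\infty$ is a branch point, but this is a live case, not an excluded one. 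You must count $\infty$ as the missing root, which lies outside $\mu_{q+1}$. At the triple root of $D$, where $D(\alpha_i)\neq 0$ fails, apply Lemma~\ref{lem13} to $1/g$. With that repair, your Riemann--Hurwitz sufficiency argument is sound in characteristic $\geq 5$; the paper does not argue this direction at all.

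In characteristic $2$, two further steps fail. First, $R=(bd^q-ac^q)x^4+(a^{q+1}+b^{q+1}+c^{q+1}+d^{q+1})x^2+(b^qd-a^qc)$ is automatically a square, and its $x^3$ and $x$ coefficients vanish identically. So comparing coefficients does not yield the formulas for $A$ and $B$, and (\ref{eqn37}) divides by $2$. The relations of case (i) of Proposition~\ref{pro1} do still hold, as reductions of identities over $\mathbb{Z}$ (checkable on $\rho\circ x^3\circ\sigma$), but this has to be argued. Second, index-$2$ ramification is wild, so $e_R(\alpha)=2$ does not force $e_g(\alpha)=3$. For example, $N=x^3+x^2$ and $D=x^2+t$ with $t\notin\{0,1\}$ give $R=x^2(x^2+t)$, yet $e_g(0)=2$. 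Your tame count $\sum(e-1)=4$ is therefore unavailable, and sufficiency needs an argument that actually uses the formulas for $A$ and $B$.
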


\textbf{Case 2} $3\mid q$.

    In this case, if the rational function $g(x)$ is not separable, then $g(x)\in\mathbb{F}_{q^2}(x^3)$, which means $g(x)=f(x^3)$, where $f(x)$ is a degree-1 rational function of the form $\frac{d^qx+a^q}{ax+d}$. From Lemma \ref{lem3}, we know that $f(x)$ permutes $\mu_{q+1}$ if and only if $a^{q+1}\neq d^{q+1}$. Since $x^3$ is a permutation of $\mu_{q+1}$, it follows that $g(x)$ permutes $\mu_{q+1}$ if and only if $f(x)$ does.

    Hence, we now assume that the rational function $g(x)$ is separable. From Lemma \ref{lem6}, we know if $g(x)=\frac{N(x)}{D(x)}\in \mathbb{F}_{q^2}(x)$ permutes $\mu_{q+1}$, then for any degree-$1$ rational functions $\rho,\sigma\in \mathbb{F}_{q^2}(x)$ that map $\mu_{q+1}$ to $\mathbb{P}^{1}(\mathbb{F}_q)$, the function $h(x)=\rho\circ g\circ \sigma^{-1}$ permutes $\mathbb{P}^{1}(\mathbb{F}_q)$, and $h(x)\in \mathbb{F}_q(x)$. According to Lemma \ref{lem7}, we know $h(x)$ is linearly equivalent to $x^3-\alpha x$ for some nonsquare $\alpha\in \mathbb{F}_q^{*}$. Note that $x^3-\alpha x$ has one branch point in $\mathbb{P}^{1}(\mathbb{F}_q)$: $\infty$, which has a unique preimage $\infty$. Since the number of branch points and the distribution of multiset $E_g(\beta)$, as $\beta$ ranges over $\mathbb{P}^{1}(\bar{\mathbb{F}}_q)$, are invariant under the composition with degree-1 rational functions, both $h(x)$ and $g(x)$ each have one branch point in $\mathbb{P}^{1}(\bar{\mathbb{F}}_q)$, and this branch point has a unique preimage. Hence, we obtain a necessary condition for $g(x)$ to induce a permutation of $\mu_{q+1}$ when $3|q$.

    \begin{lem}\label{lem16}
        When $3|q$, if $g(x)$ is a degree-$3$ permutation of $\mu_{q+1}$, then $g(x)$ has a branch point in $\mathbb{P}^{1}(\bar{\mathbb{F}}_q)$, which has a unique preimage.
    \end{lem}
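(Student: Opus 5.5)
The plan is to transport the problem from $\mu_{q+1}$ to $\mathbb{P}^{1}(\mathbb{F}_q)$ and then read off the branch data of the degree-$3$ model in characteristic $3$ from Lemma \ref{lem7}.

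First, I fix degree-$1$ rational functions $\rho,\sigma\in\mathbb{F}_{q^2}(x)$ mapping $\mu_{q+1}$ to $\mathbb{P}^{1}(\mathbb{F}_q)$. These exist by Lemma \ref{lem4}; for instance, take $\gamma=1$ and any $\delta\in\mathbb{F}_{q^2}\setminus\mathbb{F}_q$. Set $h=\rho\circ g\circ\sigma^{-1}$. Since $g(x)=x^3D^{(q)}(1/x)/D(x)$ has the shape required in Lemma \ref{lem6}, we get $h\in\mathbb{F}_q(x)$, and $h$ permutes $\mathbb{P}^{1}(\mathbb{F}_q)$ because $g$ permutes $\mu_{q+1}$. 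Composition with degree-$1$ maps preserves degree, so $\deg h=3$. It also preserves separability: $h\in\mathbb{F}_{q^2}(x^3)$ would force $g=\rho^{-1}\circ h\circ\sigma\in\mathbb{F}_{q^2}(x^3)$ only up to the inner $\sigma$, so I will argue directly through the field extension instead. The extensions $\bar{\mathbb{F}}_q(x)/\bar{\mathbb{F}}_q(g)$ and $\bar{\mathbb{F}}_q(x)/\bar{\mathbb{F}}_q(h)$ are isomorphic via $x\mapsto\sigma(x)$, and $\bar{\mathbb{F}}_q(h)=\bar{\mathbb{F}}_q(g\circ\sigma^{-1})$.

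Since we are in the separable case and $q\equiv 0\pmod 3$, Lemma \ref{lem7}(3) gives $h=\mu\circ(x^3-\alpha x)\circ\nu$ for some degree-$1$ $\mu,\nu\in\mathbb{F}_q(x)$ and a nonsquare $\alpha\in\mathbb{F}_q^*$. Altogether,
\[
g=(\rho^{-1}\circ\mu)\circ(x^3-\alpha x)\circ(\nu\circ\sigma),
\]
with outer and inner factors of degree $1$ over $\bar{\mathbb{F}}_q$.

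Next, I compute the ramification of $P(x)=x^3-\alpha x$ over $\bar{\mathbb{F}}_q$. The derivative is $P'(x)=-\alpha$, a nonzero constant, so no finite point is ramified. At $\infty$, the point $\infty$ is the only preimage of $\infty$, with $e_P(\infty)=3$. Thus $\infty$ is a branch point of $P$ with the unique preimage $\infty$; it is in fact the only branch point, though we only need existence.

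Finally, I transfer this to $g$. For degree-$1$ maps $\lambda,\kappa$ we have $e_{\lambda\circ P\circ\kappa}(\kappa^{-1}(x_0))=e_P(x_0)$, and the fibre over $\lambda(y_0)$ is $\kappa^{-1}$ of the fibre of $P$ over $y_0$. Taking $\lambda=\rho^{-1}\circ\mu$ and $\kappa=\nu\circ\sigma$, the point $\beta:=\rho^{-1}(\mu(\infty))$ is a branch point of $g$ in $\mathbb{P}^{1}(\bar{\mathbb{F}}_q)$ whose unique preimage is $\sigma^{-1}(\nu^{-1}(\infty))$, with ramification index $3$.

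The only delicate step is checking that $h$ is separable, so that Lemma \ref{lem7} applies. This is handled by the isomorphism of function-field extensions above, or equivalently by the separability criterion from \cite[Lemma 2.2]{ding2020low} being invariant under linear equivalence. Everything else is bookkeeping, since ramification indices are invariant under composition with degree-$1$ maps.
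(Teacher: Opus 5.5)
Your proposal is correct and follows the paper's argument: it transports $g$ to $h=\rho\circ g\circ\sigma^{-1}\in\mathbb{F}_q(x)$ via Lemma \ref{lem6}, applies Lemma \ref{lem7}(3) to make $h$ linearly equivalent to $x^3-\alpha x$ (whose branch point $\infty$ has the unique preimage $\infty$), and pulls this back to $g$ using invariance of ramification under composition with degree-$1$ maps. Your explicit check that $h$ inherits separability from $g$ is a useful addition, since the paper uses this step without comment after assuming $g$ separable just before the lemma.
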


    Now we want to use the above lemma to deduce some necessary conditions on the coefficients of $g(x)$.

    First, assume that $g(x)$ has a branch point $\infty$, which has a unique image. Then in $\mathbb{P}^{1}(\bar{\mathbb{F}}_q)$, $D(x)$ can be factorized as $a(x-\alpha)^3$, where $\alpha$ is the unique preimage. Since $N(x)=\hat{D}(x)$, it follows from Lemma \ref{lem2} that $N(x)$ can be factorized as $d^q(x-\alpha^{-q})$. This means that $g(x)$ has another branch point, which is a contradiction.

    Now, assume that $g(x)$ has a branch point $\beta$ and its unique $g$-preimage $\alpha$. From Lemma $\ref{lem5}$, we know $g(\mu_{q+1})\subseteq \mu_{q+1}$ since $\text{deg}(g)=3$ and $D(x)$ has no roots in $\mu_{q+1}$. Since $g(x)=x^{-1}\circ g^{(q)}(x)\circ x^{-1}$, the unique $g$-preimage of $\beta^{-q}$ in $\mathbb{P}^{1}(\bar{\mathbb{F}}_q)$ is $\alpha^{-q}$. As $g(x)$ has only one branch point, we must have $\alpha=\alpha^{-q}$ and $\beta=\beta^{-q}$, which implies $\alpha\in \mu_{q+1}$ and $\beta\in \mu_{q+1}$.

    Now we determine the exact values of $\alpha$ and $\beta$.
    As in the previous case, since $\alpha$ is a triple root of $F(x)=N(x)-\beta D(x)$, it follows that $F(\alpha)=0$, $F'(\alpha)=0$ and $F''(\alpha)=0$. 
    \begin{eqnarray}\label{eqn38}
        F(x)=N(x)-\beta D(x)=(d^q-\beta a)x^3+(c^q-\beta b)x^2+(b^q-\beta c)x+(a^q-\beta d),
    \end{eqnarray}
    \begin{eqnarray}\label{eqn39}
        F'(x)=N'(x)-\beta D'(x)=2(c^q-\beta b)x+(b^q-\beta c),
    \end{eqnarray}
    \begin{eqnarray}\label{eqn40}
        F''(x)=N''(x)-\beta D''(x)=2(c^q-\beta b).
    \end{eqnarray}

    From Equation (\ref{eqn40}), we know that $c^q=\beta b$; in other words, $\beta=\frac{c^q}{b}$. Substituting $c^q=\beta b$ into $F'(\alpha)=0$, we obtain $b^q-\beta c=0$.

    Substituting $b^q-\beta c=0$ and $c^q-\beta b=0$ into $F(\alpha)=0$, we obtain $\alpha=\left(\frac{\beta d-a^q}{d^q-\beta a}\right)^{\frac{1}{3}}$. We can also obtain $\alpha$ using the factorization of $R(x)=N'(x)D(x)-N(x)D'(x)$ from Lemma \ref{lem13}. In this case, 
    \begin{eqnarray}\label{eqn41}
        R(x)=A(x-\alpha)^4.
    \end{eqnarray}

    From Equation (\ref{eqn41}), we know that
    \begin{eqnarray}\label{eqn42}
        \alpha=\frac{cd^q-ab^q}{bd^q-ac^q}.
    \end{eqnarray}

    From the factorization of $R(x)$, we can derive some necessary conditions for $g(x)$ to permute $\mu_{q+1}$.

    \begin{pro}\label{pro2}
       When the characteristic of $\mathbb{F}_q$ is $3$, if $g(x)=\frac{N(x)}{D(x)}=\frac{d^qx^3+c^qx^2+b^qx+a^q}{ax^3+bx^2+cx+d}$ permutes $\mu_{q+1}$ and $\text{deg}(g)=3$, then $bd^q\neq ac^q$ and $\left(\frac{cd^q-ab^q}{bd^q-ac^q}\right)^3=\frac{c^qd-a^qb}{bd^q-ac^q}$.
    \end{pro}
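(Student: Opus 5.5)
In characteristic $3$, I will prove Proposition \ref{pro2} by combining Lemma \ref{lem16} with Lemma \ref{lem13}, and then comparing coefficients in the factorization (\ref{eqn41}) of $R(x)$, just as Proposition \ref{pro1} was obtained from (\ref{eqn33}).

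\textbf{Step 1: $R(x)$ has a single root of multiplicity $4$.} Assume $g$ is a degree-$3$ separable permutation of $\mu_{q+1}$. (The inseparable case, $g(x)=f(x^3)$, has already been handled separately.) By Lemma \ref{lem16} and the discussion following it, $g$ has exactly one branch point $\beta\in\mu_{q+1}$, whose unique preimage is $\alpha\in\mu_{q+1}$; in particular $\alpha\neq\infty$. Since $g$ is separable, $R(x)=N'(x)D(x)-N(x)D'(x)\neq 0$. We have $D(\alpha)\neq 0$, because otherwise $\alpha$ would be a common root of $D$ and $\hat D=N$, contradicting $\deg(g)=3$. So Lemma \ref{lem13} applies. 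Part (1) says the roots of $R$ in $\bar{\mathbb{F}}_q$ are exactly the finite ramification points, and the only one is $\alpha$. Part (2) gives $e_R(\alpha)\ge e_g(\alpha)-1=2$, with strict inequality because $3=p\mid e_g(\alpha)$.

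\textbf{Step 2: $\deg R=4$, hence $bd^q\neq ac^q$.} I must rule out that $R$ has degree less than $4$. In that case $\infty$ is a ramification point: the total ramification $\sum(e_g(P)-1)$ read off from $R$ equals $4$ when $\infty$ is counted through the degree drop (the standard Riemann–Hurwitz bookkeeping, with $R$ the Wronskian of degree $\le 2\deg g-2$). But $\infty$ being ramified contradicts the uniqueness of the ramification point $\alpha\neq\infty$. Therefore the $x^4$-coefficient $bd^q-ac^q$ in (\ref{eqn34}) is nonzero, and $R(x)=(bd^q-ac^q)(x-\alpha)^4$. This is (\ref{eqn41}).

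\textbf{Step 3: compare coefficients.} Expanding $(x-\alpha)^4$ in characteristic $3$ gives
\[
(x-\alpha)^4=(x-\alpha)^3(x-\alpha)=x^4-\alpha x^3-\alpha^3x+\alpha^4 .
\]
The $x^3$-coefficient yields $-\alpha(bd^q-ac^q)=2(cd^q-ab^q)=-(cd^q-ab^q)$, which is (\ref{eqn42}). The $x$-coefficient yields $-\alpha^3(bd^q-ac^q)=2(c^qd-a^qb)=-(c^qd-a^qb)$, that is, $\alpha^3=\frac{c^qd-a^qb}{bd^q-ac^q}$. Substituting (\ref{eqn42}) gives exactly the claimed identity. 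The remaining coefficients give further constraints: the $x^2$-coefficient must vanish, and the constant term gives $\alpha^4=\frac{b^qd-a^qc}{bd^q-ac^q}$. These constraints are not needed for the statement.

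\textbf{Main obstacle.} The delicate point is Step 2, showing that $R$ has exact degree $4$, i.e.\ $bd^q\neq ac^q$. If Riemann–Hurwitz bookkeeping at $\infty$ feels too heavy, I would instead use the symmetry $\hat R\doteq R$, which holds because $N=\hat D$. The roots of $R$ are then closed under $x\mapsto x^{-q}$, with $\infty\leftrightarrow 0$. If $\deg R<4$, the missing degree corresponds to a root at $\infty$, which forces a root at $0$ as well. Both would be ramification points, giving two distinct ramification points; this contradicts Lemma \ref{lem16}, since $\alpha\in\mu_{q+1}$ differs from both $0$ and $\infty$.
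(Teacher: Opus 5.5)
Your proof is correct and follows the paper's route. You take the unique ramification point $\alpha\in\mu_{q+1}$ from Lemma~\ref{lem16}, factor $R(x)=(bd^q-ac^q)(x-\alpha)^4$ via Lemma~\ref{lem13}, and compare the $x^3$- and $x$-coefficients of $(x-\alpha)^4=x^4-\alpha x^3-\alpha^3x+\alpha^4$ in characteristic $3$. Your Step~2 also justifies $\deg R=4$, which the paper simply asserts; your symmetry version needs only the root at $0$, since $R(0)=(bd^q-ac^q)^q$ and $0\neq\alpha$.

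One caveat: separability is a genuine hypothesis of the statement, not a case that is ``handled separately''. For inseparable $g$ such as $g(x)=x^3$ one has $b=c=0$, so the conclusion $bd^q\neq ac^q$ fails.
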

    Use bijection from $\mu_{q+1}$ to $\mathbb{P}^{1}(\mathbb{F}_q)$ given in Lemma \ref{lem4}, we can transform the rational function $g(x)$ into a degree-$3$ polynomial $h(x)$ on $\mathbb{P}^{1}(\mathbb{F}_q)$ via the composition $h=\rho\circ g\circ \sigma^{-1}$, where $\sigma(\alpha)=\rho(\beta)=\infty$.

    Note that $\mu_{q+1}\cap \mathbb{F}_{q}=\{\pm1\}$. We divide the discussion into four cases according to whether $\alpha$ or $\beta$ lie in $\mathbb{F}_q$ or not: (1): $\alpha\not\in \mathbb{F}_q, \beta\not\in \mathbb{F}_q$; (2): $\alpha\not\in \mathbb{F}_q, \beta\in \mathbb{F}_q$;
    (3): $\alpha\in \mathbb{F}_q, \beta\not\in \mathbb{F}_q$;
    (4): $\alpha\in \mathbb{F}_q, \beta\in \mathbb{F}_q$.

    (1): $\alpha\not\in \mathbb{F}_q, \beta\not\in \mathbb{F}_q$. In this case, we can choose $\sigma=\frac{\alpha x-1}{x-\alpha}$ and $\rho=\frac{\beta x-1}{x-\beta}$. It is easy to know $\sigma^{-1}=\sigma$. Now we compute $h(x)=\rho\circ g\circ \sigma^{-1}$. First, we compute $g\circ \sigma^{-1}(x)$, that is
    \begin{eqnarray}\label{eqn43}
        \begin{aligned}
            g\circ \sigma^{-1}(x)&=\frac{d^q\left(\frac{\alpha x-1}{x-\alpha}\right)^3+c^q\left(\frac{\alpha x-1}{x-\alpha}\right)^2+b^q\left(\frac{\alpha x-1}{x-\alpha}\right)+a^q}{a\left(\frac{\alpha x-1}{x-\alpha}\right)^3+b\left(\frac{\alpha x-1}{x-\alpha}\right)^2+c\left(\frac{\alpha x-1}{x-\alpha}\right)+d}\\
            &=\frac{d^q(\alpha^3x^3-1)+c^q(\alpha x-1)^2(x-\alpha)+b^q(\alpha x-1)(x-\alpha)^2+a^q(x^3-\alpha^3)}{a(\alpha^3 x^3-1)+b(\alpha x-1)^2(x-\alpha)+c(\alpha x-1)(x-\alpha)^2+d(x^3-\alpha^3)}\\
            &=\frac{A_1x^3+A_2x^2+A_3x+A_4}{B_1x^3+B_2x^2+B_3x+B_4},
        \end{aligned}
    \end{eqnarray}
    where \begin{eqnarray}\label{eqn44}
        A_1=(d^q\alpha^3+c^q\alpha^2+b^q\alpha+a^q),
    \end{eqnarray}
    \begin{eqnarray}\label{eqn45}
        A_2=-(c^q(\alpha^3+2\alpha)+b^q(2\alpha^2+1)),
    \end{eqnarray}
    \begin{eqnarray}\label{eqn46}
        A_3=c^q(2\alpha^2+1)+b^q(2\alpha+\alpha^3),
    \end{eqnarray}
    \begin{eqnarray}\label{eqn47}
        A_4=-(d^q+c^q\alpha+b^q\alpha^2+a^q\alpha^3),
    \end{eqnarray}
    \begin{eqnarray}\label{eqn48}
        B_1=a\alpha^3+b\alpha^2+c\alpha+d,
    \end{eqnarray}
    \begin{eqnarray}\label{eqn49}
        B_2=-(b(\alpha^3+2\alpha)+c(2\alpha^2+1)),
    \end{eqnarray}
    \begin{eqnarray}\label{eqn50}
        B_3=b(2\alpha^2+1)+c(2\alpha+\alpha^3),
    \end{eqnarray}
    \begin{eqnarray}\label{eqn51}
        B_4=-(a+b\alpha+c\alpha^2+d\alpha^3).
    \end{eqnarray}

    This gives
    \begin{eqnarray}\label{eqn52}
    \begin{aligned}
         h(x)=\rho\circ g\circ \sigma^{-1}&=\frac{\beta\left(\frac{A_1x^3+A_2x^2+A_3x+A_4}{B_1x^3+B_2x^2+B_3x+B_4}\right)-1}{\left(\frac{A_1x^3+A_2x^2+A_3x+A_4}{B_1x^3+B_2x^2+B_3x+B_4}\right)-\beta}\\
         &=\frac{\beta(A_1x^3+A_2x^2+A_3x+A_4)-(B_1x^3+B_2x^2+B_3x+B_4)}{A_1x^3+A_2x^2+A_3x+A_4-\beta(B_1x^3+B_2x^2+B_3x+B_4)}\\
         &=\frac{(\beta A_1-B_1)x^3+(\beta A_2-B_2)x^2+(\beta A_3-B_3)x+(\beta A_4-B_4)}{(A_1-\beta B_1)x^3+(A_2-\beta B_2)x^2+(A_3-\beta B_3)x+(A_4-\beta B_4)}.
    \end{aligned}
    \end{eqnarray}

    Since $h(x)$ is a degree-$3$ polynomial, we have $A_1=\beta B_1$, $A_2=\beta B_2$, $A_3=\beta B_3$ and $A_4\neq \beta B_1$. Since $g(\alpha)=\beta$, we have $A_1=\beta B_1$. Since $b^q=\beta c$ and $c^q=\beta b$, we have $A_2=\beta B_2$ and $A_3=\beta B_3$. Since $A_4\neq \beta B_4$, we have
    \begin{eqnarray}\label{eqn53}
        (d^q-\beta a)^2-(a^q-\beta d)^2\neq 0.
    \end{eqnarray}

    Since $g(x)$ permutes $\mu_{q+1}$ if and only if $h(x)$ permutes $\mathbb{P}^{1}(\mathbb{F}_q)$. According to Lemma \ref{lem7}, we know $h(x)$ is linearly equivalent to $x^3-\gamma x$ for some nonsquare $\gamma\in \mathbb{F}_q^{*}$. We have $\beta A_1\neq B_1$, $\beta A_2=B_2$, $\beta A_3\neq B_3$. Since $A_1=\beta B_1$, if $\beta A_1=B_1$, we have $\beta^2=1$, which is a contradiction. Since $\beta A_2=B_2$, we have 
    \begin{eqnarray}\label{eqn54}
        (\alpha^2-1)[(\beta c^q-b)\alpha-(\beta b^q-c)]=0.
    \end{eqnarray}


   It is not hard to show that 
    \begin{eqnarray}\label{eqn55}
        \alpha=\frac{cd^q-ab^q}{bd^q-ac^q}=\frac{b^q}{c^q}=\frac{c-\beta b^q}{b-\beta c^q}.
    \end{eqnarray}

    Hence this condition is automatically satisfied when $\alpha=\frac{b^q}{c^q}$.

 Since $\alpha=\frac{c-\beta b^q}{b-\beta c^q}$ and $\alpha^2\neq 1$, we have $\alpha\neq \frac{\beta c^q-b}{\beta b^q-c}$, which is equivalent to $\beta A_3\neq B_3$. Since $\gamma$ is a nonsquare of $\mathbb{F}_q$, we have $\frac{B_3-\beta A_3}{\beta A_1-B_1}$ is a nonsquare of $\mathbb{F}_q^{*}$.

 (2): $\alpha\not\in \mathbb{F}_q$, $\beta\in \mathbb{F}_q$. In this case, we can choose $\sigma=\frac{\alpha x-1}{x-\alpha}$ and $\rho=\frac{\alpha x-\beta \alpha^q}{x-\beta}$. The composition $g\circ \sigma^{-1}(x)$ is the same as in the previous case, namely, $\frac{A_1x^3+A_2x^2+A_3x+A_4}{B_1x^3+B_2x^2+B_3x+B_4}$.

 Now $h(x)$ is
 \begin{eqnarray}\label{eqn56}
 \begin{aligned}
      h(x)&=\frac{\alpha\left(\frac{A_1x^3+A_2x^2+A_3x+A_4}{B_1x^3+B_2x^2+B_3x+B_4}\right)-\beta \alpha^q}{\left(\frac{A_1x^3+A_2x^2+A_3x+A_4}{B_1x^3+B_2x^2+B_3x+B_4}\right)-\beta}\\
      &=\frac{\alpha(A_1x^3+A_2x^2+A_3x+A_4)-\beta \alpha^q(B_1x^3+B_2x^2+B_3x+B_4)}{A_1x^3+A_2x^2+A_3x+A_4-\beta(B_1x^3+B_2x^2+B_3x+B_4)}\\
      &=\frac{(\alpha A_1-\beta \alpha^q B_1)x^3+(\alpha A_2-\beta \alpha^q B_2)x^2+(\alpha A_3-\beta \alpha^q B_3)x+(\alpha A_4-\beta \alpha^q B_4)}{(A_1-\beta B_1)x^3+(A_2-\beta B_2)x^2+(A_3-\beta B_3)x+(A_4-\beta B_4)}.
 \end{aligned} 
 \end{eqnarray}
  By the same discussion as in the above, we have $A_1=\beta B_1$, $A_2=\beta B_2$, $A_3=\beta B_3$, and $A_4\neq \beta B_4$. From Equation (\ref{eqn53}), we know $(d^q-\beta a)^2-(a^q-\beta d)^2\neq 0$ if and only if $A_4\neq \beta B_4$.

  Since $h(x)$ is a degree-$3$ permutation polynomial over $\mathbb{P}^{1}(\mathbb{F}_q)$, $\alpha A_1\neq \beta \alpha^q B_1$, $\alpha A_2=\beta \alpha^qB_2$, and $\frac{\beta \alpha^q B_3-\alpha A_3}{\alpha A_1-\beta \alpha^qB_1}$ is a nonsquare of $\mathbb{F}_q^{*}$.

  Since $\frac{c^q}{b}=\pm 1$, substituting $c^q=\pm b$ into $\alpha A_2-\beta \alpha^qB_2=0$, we have
  \begin{eqnarray}\label{eqn57}
      [b(\alpha^3+2\alpha)+c(2\alpha^2+1)](\alpha-\alpha^q)=0.
  \end{eqnarray}

  Since $\alpha\not\in \mathbb{F}_q$, we have $b(\alpha^3+2\alpha)+c(2\alpha^2+1)=0$. Moreover, $\alpha=\frac{cd^q-ab^q}{bd^q-ac^q}=\frac{c}{b}$, substituting $\frac{c}{b}$ into Equation (\ref{eqn57}), we get $0$. Hence this condition is automatically satisfied when $\alpha=\frac{c}{b}$. Since $A_1=\beta B_1$, $\alpha A_1-\beta \alpha^qB_1=\beta B_1(\alpha-\alpha^q)$. Moreover, $\alpha\not\in \mathbb{F}_q$, we know that $\alpha A_1\neq \beta \alpha^q B_1$.
  Hence, $h(x)$ is a permutation of $\mathbb{P}^{1}(\mathbb{F}_q)$ if and only if $\frac{\beta \alpha^q B_3-\alpha A_3}{\alpha A_1-\beta \alpha^q B_1}$ is a nonsquare of $\mathbb{F}_{q}^{*}$.

(3): $\alpha\in \mathbb{F}_q$, $\beta \not \in \mathbb{F}_q$. In this case, we can choose $\sigma=\frac{\beta x-\alpha\beta^q}{x-\alpha}$. It is easy to see that $\sigma^{-1}(x)=\frac{\alpha x-\alpha\beta^q}{x-\beta}$. Now we compute $h(x)=\rho\circ g\circ \sigma^{-1}$. First, we compute $g\circ \sigma^{-1}(x)$, that is
\begin{eqnarray}\label{eqn58}
    \begin{aligned}
        g\circ\sigma^{-1}(x)&=\frac{d^q\left(\frac{\alpha x-\alpha \beta^q}{x-\beta}\right)^3+c^q\left(\frac{\alpha x-\alpha \beta^q}{x-\beta}\right)^2+b^q\left(\frac{\alpha x-\alpha \beta^q}{x-\beta}\right)+a^q}{a\left(\frac{\alpha x-\alpha \beta^q}{x-\beta}\right)^3+b\left(\frac{\alpha x-\alpha \beta^q}{x-\beta}\right)^2+c\left(\frac{\alpha x-\alpha \beta^q}{x-\beta}\right)+d}\\
        &=\frac{d^q(\alpha^3x^3-\alpha^3\beta^{3q})+c^q(\alpha x-\alpha \beta^q)^2(x-\beta)+b^q(\alpha x-\alpha \beta^q)(x-\beta)^2+a^q(x^3-\beta^3)}{a(\alpha^3x^3-\alpha^3\beta^{3q})+b(\alpha x-\alpha \beta^q)^2(x-\beta)+c(\alpha x-\alpha \beta^q)(x-\beta)^2+d(x^3-\beta^3)}\\
        &=\frac{A_1x^3+A_2x^2+A_3x+A_4}{B_1x^3+B_2x^2+B_3x+B_4},
    \end{aligned}
\end{eqnarray}
where
\begin{eqnarray}\label{eqn59}
    A_1=d^q\alpha^3+c^q\alpha^2+b^q\alpha+a^q,
\end{eqnarray}
\begin{eqnarray}\label{eqn60}
    A_2=-(c^q(\beta\alpha^2+2\alpha^2\beta^q)+b^q(\alpha\beta^q+2\alpha\beta)),
\end{eqnarray}
\begin{eqnarray}\label{eqn61}
    A_3=c^q(2\alpha^2+\alpha^2\beta^{q-1})+b^q(2\alpha+\alpha\beta^2),
\end{eqnarray}
\begin{eqnarray}\label{eqn62}
    A_4=-(d^q\alpha^3\beta^{q-2}+c^q\alpha^2\beta^q+b^q\alpha\beta+a^q\beta^3),
\end{eqnarray}
\begin{eqnarray}\label{eqn63}
    B_1=a\alpha^3+b\alpha^2+c\alpha+d,
\end{eqnarray}
\begin{eqnarray}\label{eqn64}
    B_2=-(b(\beta\alpha^2+2\alpha^2\beta^q)+c(\alpha \beta^q+2\alpha\beta)),
\end{eqnarray}
\begin{eqnarray}\label{eqn65}
    B_3=b(2\alpha^2+\alpha^2\beta^{q-1})+c(2\alpha+\alpha\beta^2),
\end{eqnarray}
\begin{eqnarray}\label{eqn66}
    B_4=-(a\alpha^3\beta^{q-2}+b\alpha^2\beta^q+c\alpha\beta+d\beta^3).
\end{eqnarray}

This gives
\begin{eqnarray}\label{eqn67}
    h(x)=\frac{(\beta A_1-B_1)x^3+(\beta A_2-B_2)x^2+(\beta A_3-B_3)x+(\beta A_4-B_4)}{(A_1-\beta B_1)x^3+(A_2-\beta B_2)x^2+(A_3-\beta B_3)x+(A_4-\beta B_4)}.
\end{eqnarray}

Since $g(x)$ permutes $\mu_{q+1}$, $h(x)$ permutes $\mathbb{P}^{1}(\mathbb{F}_q)$. According to Lemma \ref{lem7}, we have $A_1=\beta B_1$, $A_2=\beta B_2$, $A_3=\beta B_3$, $A_4\neq \beta B_4$, $\beta A_1\neq B_1$, $\beta A_2=B_2$ and $\frac{B_3-\beta A_3}{\beta A_1-B_1}$ is a nonsquare in $\mathbb{F}_q$. Let us examine each condition individually to determine whether it holds.

Since $g(\alpha)=\beta$, we obtain $A_1=\beta B_1$. Moreover, $\beta\not\in \mathbb{F}_q$, we know $\beta A_1\neq B_1$. Since $b^q=\beta c$ and $c^q=\beta b$, we have $A_2=\beta B_2$ and $A_3=\beta B_3$. Now let us check  $A_4\neq \beta B_4$, which is equivalent to
\begin{eqnarray}\label{eqn68}
    \alpha^3\beta^{q-2}(d^q-\beta a)+\beta^3(a^q-\beta d)\neq 0.
    \end{eqnarray}

Assume that $\alpha^3\beta^{q-2}(d^q-\beta a)+\beta^3(a^q-\beta d)=0$. Multiplying both sides by $\beta^3$, we have
\begin{eqnarray}\label{eqn69}
    \alpha^3(d^q-\beta a)+\beta^6(a^q-\beta d)=0.
\end{eqnarray}

From Equation (\ref{eqn69}), we get $\beta^6=\frac{\alpha^3(\beta a-d^q)}{a^q-\beta d}=1$, which is a contradiction. Hence we have $A_4\neq \beta B_4$.
Finally, since $\beta A_2=B_2$, we have
\begin{eqnarray}\label{eqn70}
    (b-\alpha c)(\beta-\beta^q)=0.
\end{eqnarray}

Since $\alpha=\frac{b^q}{c^q}$ and $\alpha\in \mathbb{F}_q$ and $\beta\not\in \mathbb{F}_q$, $\alpha=\frac{b}{c}$. Hence this condition is automatically satisfied. Hence, $h(x)$ is a permutation of $\mathbb{P}^{1}(\mathbb{F}_q)$ if and only if $\frac{B_3-\beta A_3}{\beta A_1-B_1}$ is a nonsquare of $\mathbb{F}_q$.

(4): $\alpha\in \mathbb{F}_q$, $\beta\in \mathbb{F}_q$. In this case, we first choose an element $\gamma\in \mathbb{F}_{q^2}$ of order $q+1$. Then we choose $\sigma=\frac{\gamma x-\alpha\gamma^q}{x-\alpha}$, $\rho=\frac{\gamma x-\beta \gamma^q}{x-\beta}$. It is easy to know that $\sigma^{-1}(x)=\frac{\alpha x-\alpha\gamma^q}{x-\gamma}$. The composition $g\circ\sigma^{-1}(x)$ is similar to the previous case; namely, it has the form of $\frac{A_1x^3+A_2x^2+A_3x+A_4}{B_1x^3+B_2x^2+B_3x+B_4}$, where 
\begin{eqnarray}\label{eqn71}
    A_1=d^q\alpha^3+c^q\alpha^2+b^q\alpha+a^q,
\end{eqnarray}
\begin{eqnarray}\label{eqn72}
    A_2=-(c^q(\gamma\alpha^2+2\alpha^2\gamma^q)+b^q(\alpha\gamma^q+2\alpha\gamma)),
\end{eqnarray}
\begin{eqnarray}\label{eqn73}
    A_3=c^q(2\alpha^2+\alpha^2\gamma^{q-1})+b^q(2\alpha+\alpha\gamma^2),
\end{eqnarray}
\begin{eqnarray}\label{eqn74}
    A_4=-(d^q\alpha^3\gamma^{q-2}+c^q\alpha^2\gamma^q+b^q\alpha\gamma+a^q\gamma^3),
\end{eqnarray}
\begin{eqnarray}\label{eqn75}
    B_1=a\alpha^3+b\alpha^2+c\alpha+d,
\end{eqnarray}
\begin{eqnarray}\label{eqn76}
    B_2=-(b(\gamma\alpha^2+2\alpha^2\gamma^q)+c(\alpha\gamma^q+2\alpha\gamma)),
\end{eqnarray}
\begin{eqnarray}\label{eqn77}
    B_3=b(2\alpha^2+\alpha^2\gamma^{q-1})+c(2\alpha+\alpha\gamma^2),
\end{eqnarray}
\begin{eqnarray}\label{eqn78}
    B_4=-(a\alpha^3\gamma^{q-2}+b\alpha^2\gamma^q+c\alpha\gamma+d\gamma^3).
\end{eqnarray}

Now $h(x)$ is
 \begin{eqnarray}\label{eqn79}
 \begin{aligned}
      h(x)&=\frac{\gamma\left(\frac{A_1x^3+A_2x^2+A_3x+A_4}{B_1x^3+B_2x^2+B_3x+B_4}\right)-\beta \gamma^q}{\left(\frac{A_1x^3+A_2x^2+A_3x+A_4}{B_1x^3+B_2x^2+B_3x+B_4}\right)-\beta}\\
      &=\frac{\gamma(A_1x^3+A_2x^2+A_3x+A_4)-\beta \gamma^q(B_1x^3+B_2x^2+B_3x+B_4)}{A_1x^3+A_2x^2+A_3x+A_4-\beta(B_1x^3+B_2x^2+B_3x+B_4)}\\
      &=\frac{(\gamma A_1-\beta \gamma^q B_1)x^3+(\gamma A_2-\beta \gamma^q B_2)x^2+(\gamma A_3-\beta \gamma^q B_3)x+(\gamma A_4-\beta \gamma^q B_4)}{(A_1-\beta B_1)x^3+(A_2-\beta B_2)x^2+(A_3-\beta B_3)x+(A_4-\beta B_4)}.
 \end{aligned} 
 \end{eqnarray}
 Since $g(x)$ permutes $\mu_{q+1}$, $h(x)$ permutes $\mathbb{P}^{1}(\mathbb{F}_q)$. According to Lemma \ref{lem7}, we have $A_1=\beta B_1$, $A_2=\beta B_2$, $A_3=\beta B_3$, $A_4\neq \beta B_4$, $\gamma A_1\neq \beta \gamma^q B_1$, $\gamma A_2=\beta \gamma^qB_2$ and $\frac{\beta\gamma^q B_3-\gamma A_3}{\gamma A_1-\beta \gamma^qB_1}$ is a nonsquare in $\mathbb{F}_q$. Let us examine each condition individually to determine whether it holds. The discussion is similar to the previous case. The conclusion is $h(x)$ is a permutation over $\mathbb{P}^{1}(\mathbb{F}_q)$ if and only if $\frac{\beta\gamma^q B_3-\gamma A_3}{\gamma A_1-\beta\gamma^q B_1}$ is a nonsquare in $\mathbb{F}_q$. 
 
From the above discussion, we can obtain necessary and sufficient conditions for $g(x)$ to permute $\mu_{q+1}$ when the characteristic of $\mathbb{F}_q$ is $3$.

\begin{thm}\label{thm3}
    Let $g(x)=\frac{N(x)}{D(x)}=\frac{d^qx^3+c^qx^2+b^qx+a^q}{ax^3+bx^2+cx+d}\in \mathbb{F}_{q^2}(x)$. Assume that $\text{deg}(g)=3$. Let $\alpha=\frac{cd^q-ab^q}{bd^q-ac^q}$ and $\beta=\frac{c^q}{b}$. When the characteristic of $\mathbb{F}_q$ is $3$, the function $g(x)$ permutes $\mu_{q+1}$ if and only if $\alpha, \beta\in \mu_{q+1}$, Proposition \ref{pro2} holds,  and one of the following conditions is satisfied:

    (1) $\alpha\not\in \mathbb{F}_q$, $\beta\not\in \mathbb{F}_q$, and $\frac{B_3-\beta A_3}{\beta A_1-B_1}$ is a nonsquare of $\mathbb{F}_q^{*}$, where 
    $A_1=d^q\alpha^3+c^q\alpha^2+b^q\alpha+a^q$, $A_3=c^q(2\alpha^2+1)+b^q(2\alpha+\alpha^3)$, $B_1=a\alpha^3+b\alpha^2+c\alpha+d$, and $B_3=b(2\alpha^2+1)+c(2\alpha+\alpha^3)$;

    (2) $\alpha\not\in \mathbb{F}_q$, $\beta\in \mathbb{F}_q$, and $\frac{\beta\alpha^qB_3-\alpha A_3}{\alpha A_1-\beta\alpha^q B_1}$ is a nonsquare of $\mathbb{F}_q^{*}$, where $A_1,A_3,B_1,B_3$ are defined as in $(1)$;

    (3) $\alpha\in \mathbb{F}_q$, $\beta\not\in \mathbb{F}_q$, and $\frac{B_3-\beta A_3}{\beta A_1-B_1}$ is a nonsquare of $\mathbb{F}_q^{*}$, where $A_1=d^q\alpha^3+c^q\alpha^2+b^q\alpha+a^q$, $A_3=c^q(2\alpha^2+\alpha^2\beta^{q-1})+b^q(2\alpha+\alpha\beta^2)$, $B_1=a\alpha^3+b\alpha^2+c\alpha+d$, and $B_3=b(2\alpha^2+\alpha^2\beta^{q-1})+c(2\alpha+\alpha\beta^2)$;

    (4) $\alpha\in \mathbb{F}_q$, $\beta\in \mathbb{F}_q$. Let $\gamma\in \mathbb{F}_{q^2}$ be an element of order $q+1$. Then the expression $\frac{\beta\gamma^qB_3-\gamma A_3}{\gamma A_1-\beta\gamma^q B_1}$ is a nonsquare of $\mathbb{F}_q^{*}$, where $A_1=d^q\alpha^3+c^q\alpha^2+b^q\alpha+a^q$, $A_3=c^q(2\alpha^2+\alpha^2\gamma^{q-1})+b^q(2\alpha+\alpha\gamma^2)$, $B_1=a\alpha^3+b\alpha^2+c\alpha+d$, and $B_3=b(2\alpha^2+\alpha^2\gamma^{q-1})+c(2\alpha+\alpha\gamma^2)$.
\end{thm}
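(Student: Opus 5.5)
We will prove Theorem \ref{thm3} by reducing it, via Lemma \ref{lem6}, to the classification of degree-$3$ separable permutation rational functions in characteristic $3$ (Lemma \ref{lem7}(3)), and then reading off the conditions from the normal forms computed above.

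\emph{Necessity.} Assume $g$ permutes $\mu_{q+1}$ and $\deg(g)=3$. By Lemma \ref{lem16}, $g$ has exactly one branch point $\beta$, and that branch point has a unique preimage $\alpha$.
\begin{itemize}
\item The symmetry $g=x^{-1}\circ g^{(q)}\circ x^{-1}$ sends $(\alpha,\beta)$ to $(\alpha^{-q},\beta^{-q})$. Since the branch point is unique, $\alpha,\beta\in\mu_{q+1}$; the case $\beta=\infty$ was already excluded.
\item From $F''(\alpha)=0$ and $F'(\alpha)=0$ we get $c^q=\beta b$ and $b^q=\beta c$, so $\beta=c^q/b$.
\item The factorization $R(x)=A(x-\alpha)^4$ from Lemma \ref{lem13} gives $\alpha=\frac{cd^q-ab^q}{bd^q-ac^q}$, together with Proposition \ref{pro2}.
\end{itemize}
Next we split into the four cases according to whether $\alpha$ and $\beta$ lie in $\mu_{q+1}\cap\mathbb{F}_q=\{\pm1\}$. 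In each case we use the explicit $\sigma,\rho$ of Lemma \ref{lem4} with $\sigma(\alpha)=\rho(\beta)=\infty$, so that $h=\rho\circ g\circ\sigma^{-1}\in\mathbb{F}_q(x)$ is a cubic polynomial. By Lemma \ref{lem6}, $h$ permutes $\mathbb{P}^1(\mathbb{F}_q)$, and by Lemma \ref{lem7}(3), $h$ is linearly equivalent to $x^3-\gamma x$ with $\gamma$ a nonsquare. For a cubic polynomial in characteristic $3$ with leading coefficient $u$, $x^2$-coefficient $v$ and $x$-coefficient $w$, this is equivalent to $-w/u$ being a nonsquare in $\mathbb{F}_q^*$ after translating to kill the $x^2$-term. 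The translation is harmless: $x\mapsto x+t$ does not change the $x$-coefficient in characteristic $3$ when $v=0$. Substituting the coefficient computations from the four cases above then yields exactly the stated nonsquare expressions.

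\emph{Sufficiency.} Assume the stated conditions hold.
\begin{itemize}
\item Proposition \ref{pro2} and $\alpha,\beta\in\mu_{q+1}$ give $R(x)=C(x-\alpha)^4$, hence $c^q=\beta b$ and $b^q=\beta c$ by the derivative relations. So $\alpha$ is a triple root of $N-\beta D$, and $\beta$ is a totally ramified branch point with unique preimage $\alpha$.
\item With these $\sigma,\rho$, the identities verified in the case analysis ($A_1=\beta B_1$, $A_2=\beta B_2$, $A_3=\beta B_3$, $A_4\neq\beta B_4$, and the vanishing of the $x^2$-coefficient) show that $h$ is a polynomial of degree exactly $3$ with zero $x^2$-term.
\item The nonsquare hypothesis makes $h$ linearly equivalent over $\mathbb{F}_q$ to $x^3-\gamma x$ with $\gamma$ a nonsquare. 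This permutes $\mathbb{F}_q$: if $x^3-\gamma x=y^3-\gamma y$ with $x\neq y$, then $(x-y)^2=\gamma$, which is impossible. It also fixes $\infty$.
\item Lemma \ref{lem6} transfers the permutation back to $\mu_{q+1}$.
\end{itemize}

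The main obstacle is showing that the auxiliary coefficient conditions hold automatically in each case, rather than being extra hypotheses. These conditions are the vanishing of the $x^2$-coefficient (equations (\ref{eqn54}), (\ref{eqn57}), (\ref{eqn70})) and $A_4\neq\beta B_4$. Both rest on the identity $\alpha=b^q/c^q$ (or $c/b$ when $\beta\in\mathbb{F}_q$), which we would derive by combining $c^q=\beta b$, $b^q=\beta c$ with the formula for $\alpha$. We would also need to check that the leading coefficient $\beta A_1-B_1$ (respectively its analogue in each case) is nonzero. This follows from $\beta^2\neq1$, or from $\alpha\notin\mathbb{F}_q$; in case (4) it comes from the choice of $\gamma\notin\mathbb{F}_q$.
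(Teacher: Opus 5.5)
Your proposal is correct and follows essentially the same route as the paper. You use Lemma~\ref{lem16} and the symmetry $g=x^{-1}\circ g^{(q)}\circ x^{-1}$ to put $\alpha,\beta$ in $\mu_{q+1}$, you get $\beta=c^q/b$, $\alpha=\frac{cd^q-ab^q}{bd^q-ac^q}=c/b$ and Proposition~\ref{pro2} from $F''(\alpha)=F'(\alpha)=0$ and $R(x)=C(x-\alpha)^4$, and you use the four choices of $\sigma,\rho$ from Lemma~\ref{lem4} with Lemmas~\ref{lem6} and~\ref{lem7} to reduce everything to a nonsquare condition on the cubic $h$.

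One small correction: in characteristic $3$ a translation $x\mapsto x+t$ never changes the $x^2$-coefficient, so no translation can kill it. The criterion "$-w/u$ nonsquare" holds only because that coefficient (a nonzero multiple of $B_2$) vanishes automatically once $\alpha=c/b$, which is exactly the check you single out as the main obstacle.
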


Now we show that when $\alpha, \beta \in \mathbb{F}_q$, the expression $\frac{\beta \gamma^q B_3-\gamma A_3}{\gamma A_1-\beta \gamma^q B_1}$ can be written solely in terms of the coefficients $a, b, c,$ and $d$. We take $\alpha = 1$ and $\beta = 1$ as an example; other cases can be handled similarly.

If $\alpha=\beta=1$, then from the expression of $\alpha$ and $\beta$, we obtain $c^q=b$ and $c(d^q-a)=b(d^q-a)$. From Proposition \ref{pro2}, we know that $d^q\neq a$. Since $c(d^q-a)=b(d^q-a)$, it follows that $b=c$. Together with $c^q = b$, we conclude that $b = c\in \mathbb{F}_q^{*}$.

Again, from Proposition \ref{pro2}, we know that $c^qd-a^qb=bd^q-ac^q$. Using $b=c\in \mathbb{F}_{q}^{*}$, we obtain $d^q+a^q=d+a$. Substituting these expressions into $A_1$, $A_3$, $B_1$, and $B_3$, we obtain

\begin{eqnarray}\label{eqn80}
    A_1=a+b+c+d,
\end{eqnarray}
\begin{eqnarray}\label{eqn81}
    A_3=b(\gamma^2+\gamma^{q-1}+1),
\end{eqnarray}
\begin{eqnarray}\label{eqn82}
    B_1=a+b+c+d,
\end{eqnarray}
\begin{eqnarray}\label{eqn83}
    B_3=b(\gamma^2+\gamma^{q-1}+1).
\end{eqnarray}

Plugging $A_1$, $A_3$, $B_1$, and $B_3$ into $\frac{\beta\gamma^q B_3-\gamma A_3}{\gamma A_1-\beta \gamma^q B_1}$, we obtain 
\begin{eqnarray}\label{eqn84}
    \frac{\beta\gamma^qB_3-\gamma A_3}{\gamma A_1-\beta \gamma^q B_1}=\frac{(\gamma^q-\gamma)b(\gamma^2+\gamma^{q-1}+1)}{(\gamma-\gamma^q)(a+b+c+d)}=-\frac{b(\gamma^2+\gamma^{q-1}+1)}{a+b+c+d}.
\end{eqnarray}

Since $\gamma^{q+1}=1$, we obtain $\gamma^2+\gamma^{q-1}+1=(\gamma-\gamma^q)^2$. Because $(\gamma-\gamma^q)^q=-(\gamma-\gamma^q)$, we have $(\gamma-\gamma^q)^{q-1}=-1$. Together with $(-\frac{b(\gamma^2+\gamma^{q-1}+1)}{a+b+c+d})^{\frac{q-1}{2}}=-1$, we deduce that $(-\frac{b}{a+b+c+d})^{\frac{q-1}{2}}=1$, which generalizes Lemma 3.8 in \cite{ding2025class}.

We summarize the above discussion as follows:

\begin{pro}\label{pro3}
    Let $g(x)=\frac{N(x)}{D(x)}=\frac{d^qx^3+c^qx^2+b^qx+a^q}{ax^3+bx^2+cx+d}\in \mathbb{F}_{q^2}(x)$. Assume that $\text{deg}(g)=3$, $\alpha=\frac{cd^q-ab^q}{bd^q-ac^q}\in \mathbb{F}_q\cap \mu_{q+1}$ and $\beta=\frac{c^q}{b}\in \mathbb{F}_q\cap \mu_{q+1}$. When the characteristic of $\mathbb{F}_q$ is $3$, the function $g(x)$ permutes $\mu_{q+1}$ if and only if Proposition \ref{pro2} holds and $-\frac{\beta b}{N(\alpha)}$ is a nonsquare of $\mathbb{F}_q^{*}$.
\end{pro}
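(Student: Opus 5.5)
The plan is to specialize Theorem \ref{thm3}(4) and compute the quantity there in closed form. Since $\mu_{q+1}\cap\mathbb{F}_q=\{\pm1\}$, the hypotheses force $\alpha,\beta\in\{\pm1\}$, so $\alpha^2=\beta^2=1$. Theorem \ref{thm3} then says that, given Proposition \ref{pro2}, $g$ permutes $\mu_{q+1}$ if and only if
\[Q:=\frac{\beta\gamma^qB_3-\gamma A_3}{\gamma A_1-\beta\gamma^qB_1}\]
is a nonsquare of $\mathbb{F}_q^{*}$. Here $A_1,A_3,B_1,B_3$ are as in case (4) and $\gamma$ is any fixed element of order $q+1$. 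The task is to rewrite $Q$ as a fixed factor times $-\beta b/N(\alpha)$, exactly as in the worked example $\alpha=\beta=1$ above, but keeping the signs $\alpha,\beta$ general.

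First I would collect the coefficient relations. From the analysis preceding Proposition \ref{pro2}, the branch point satisfies $c^q=\beta b$ and $b^q=\beta c$. From $\alpha=b^q/c^q$ in (\ref{eqn55}) we get $\alpha=\beta c/(\beta b)=c/b$, so $c=\alpha b$ and $b\neq0$. I would then substitute these into $A_3$ and $B_3$, using $\alpha^2=1$ and characteristic $3$ (so $4=1$). Writing $S:=\gamma^2+\gamma^{q-1}+1$, this should give
\[B_3=b\bigl[(2+\gamma^{q-1})+(2+\gamma^2)\bigr]=bS,\qquad A_3=\beta b S.\]
For the cubic coefficients, $A_1=N(\alpha)$ and $B_1=D(\alpha)$. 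Since $g(\alpha)=\beta$ and $\beta^2=1$, we have $\beta B_1=\beta^2 A_1\cdot\beta^{-1}\beta=A_1$, i.e. $\beta B_1=A_1$.

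With these, the numerator of $Q$ becomes $\beta bS(\gamma^q-\gamma)$ and the denominator becomes $A_1(\gamma-\gamma^q)$. Hence
\[Q=-\frac{\beta b\,S}{N(\alpha)}.\]
Here $N(\alpha)\neq0$, since otherwise $\alpha$ would be a common root of $N$ and $D$, contradicting $\deg(g)=3$.

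The last step is to determine the square class of $S$ in $\mathbb{F}_q^*$. Using $\gamma^{q}=\gamma^{-1}$, we have $S=\gamma^2+\gamma^{-2}+1=\gamma^2+\gamma^{-2}-2=(\gamma-\gamma^{q})^2$ in characteristic $3$. The element $\delta=\gamma-\gamma^q$ is nonzero, because $\gamma\neq\pm1$ as $q+1>2$. It also satisfies $\delta^q=-\delta$, so $\delta\notin\mathbb{F}_q$ while $\delta^2\in\mathbb{F}_q$. Therefore $S$ is a nonsquare in $\mathbb{F}_q^*$, and $Q$ lies in the square class opposite to that of $-\beta b/N(\alpha)$.

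This square-class bookkeeping is the step I expect to be the main obstacle, and it needs to be redone carefully. It interacts with three conventions: the normalization $x^3-\alpha x$ with $\alpha$ a nonsquare in Lemma \ref{lem7}, the sign convention built into the definition of $Q$ in Theorem \ref{thm3}, and the conclusion $(-b/(a+b+c+d))^{(q-1)/2}=1$ reached in the worked example. If these are read as above, the computation says the permutation condition is that $-\beta b/N(\alpha)$ is a \emph{square}, which conflicts with the word "nonsquare" in the statement. So I would recheck each sign against Lemma \ref{lem7} and fix the final criterion accordingly. I would also verify that the computation is independent of the choice of $\gamma$, which it is, since $\gamma$ enters only through the class of $S$.
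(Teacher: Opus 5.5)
Your computation is correct and follows the paper's route. You specialize Theorem~\ref{thm3}(4), evaluate $A_1,A_3,B_1,B_3$, and use $\gamma^2+\gamma^{q-1}+1=(\gamma-\gamma^q)^2$ with $(\gamma-\gamma^q)^{q-1}=-1$. The paper only works out $\alpha=\beta=1$, where $N(\alpha)=a+b+c+d$, and calls the other cases similar; you handle all four sign choices at once through $c^q=\beta b$, $b^q=\beta c$, $c=\alpha b$.

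The sign conventions you want to recheck are consistent. Once the $x^2$-coefficient vanishes, $h=Px^3+Rx+T$ is a polynomial. Lemma~\ref{lem7}(3) then requires $-R/P$ to be a nonsquare, and $-R/P$ is precisely the quotient $Q$ of Theorem~\ref{thm3}(4). Since $S=(\gamma-\gamma^q)^2$ is a nonsquare, you have proved that $g$ permutes $\mu_{q+1}$ if and only if Proposition~\ref{pro2} holds and $-\beta b/N(\alpha)$ is a nonzero \emph{square}. This quantity lies in $\mathbb{F}_q^*$ because it equals $Q/S$. That holds even though $b^{q-1}=\alpha\beta$ shows $b$ and $N(\alpha)$ individually leave $\mathbb{F}_q$ when $\alpha\beta=-1$.

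The square criterion is also what the paper's own derivation just before the proposition concludes, namely $(-b/(a+b+c+d))^{(q-1)/2}=1$. So the word ``nonsquare'' in the statement is a misprint, and the statement as printed is false. Take $q=3$, $\mathbb{F}_9=\mathbb{F}_3(i)$ with $i^2=-1$, and $a=0$, $b=c=1$, $d=-1$. Then $g(x)=(-x^3+x^2+x)/(x^2+x-1)$ has degree $3$, $\alpha=\beta=1$, and Proposition~\ref{pro2} holds. Here $-\beta b/N(\alpha)=-1$ is a nonsquare in $\mathbb{F}_3^*$, yet $g(-1)=g(i)=g(-i)=-1$. With $a=1$ instead, $N(1)=-1$, so $-\beta b/N(\alpha)=1$ is a square, and $g$ maps $1,-1,i,-i$ to $1,-1,-i,i$. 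So do not adjust your signs to match the statement; state and prove the corrected version.

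One step needs tightening. For the ``if'' direction you cannot import $c^q=\beta b$, $b^q=\beta c$ and $g(\alpha)=\beta$ from the branch-point analysis, because that analysis assumes $g$ permutes $\mu_{q+1}$. They follow from the hypotheses instead:
\begin{itemize}
\item $c^q=\beta b$ is the definition of $\beta$.
\item Applying Frobenius and using $\beta^2=1$ gives $b^q=\beta c$.
\item Substituting into the definition of $\alpha$ gives $\alpha=c(d^q-\beta a)/\bigl(b(d^q-\beta a)\bigr)=c/b$, using $d^q\neq\beta a$.
\item $N(x)-\beta D(x)=(d^q-\beta a)x^3+(a^q-\beta d)$ vanishes at $\alpha$ by the identity $\alpha^3=(c^qd-a^qb)/(bd^q-ac^q)=(\beta d-a^q)/(d^q-\beta a)$ from Proposition~\ref{pro2}.
\end{itemize}
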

\section{Applications}

In this section, we use our previous results to completely determine permutation polynomials that are induced by degree-$3$ rational functions. As examples, we consider $ax^{2q-1}+bx^q+cx^{2q-1}+dx^{q^2-q+1}$ and $ax^{3q}+bx^{2q+1}+cx^{q+2}+dx^3$ over $\mathbb{F}_{q^2}$.

First, let us look $h(x)=ax^{3q}+bx^{2q+1}+cx^{q+2}+dx^3$, which can also be written as $h(x)=x^3f(x^{q-1})$, where $f(x)=ax^3+bx^2+cx+d$. Using Corollary \ref{cor1} and Lemma \ref{lem5}, we obtain $f(x)$ is a permutation polynomial over $\mathbb{F}_{q^2}$ if and only if $\text{gcd}(3,q-1)=1$, $f(x)$ has no roots in $\mu_{q+1}$ and $g(x)=\frac{N(x)}{D(x)}=\frac{d^qx^3+c^qx^2+b^qx+a^q}{ax^3+bx^2+cx+d}$ permutes $\mu_{q+1}$. 

It is easy to see that if $ax^3+bx^2+cx+d=0$ has a root $\alpha$ in $\mu_{q+1}$, then $\alpha$ is also a root of $d^qx^3+c^qx^2+b^qx+a^q=0$. Hence, the degree of $\text{gcd}(N(x),D(x))$ is greater than $0$. Therefore, if $\text{deg}(g)=3$, then $f(x)$ has no roots in $\mu_{q+1}$. In this case, the condition can be omitted. Then, using Theorem \ref{thm1}, Theorem \ref{thm2}, Theorem \ref{thm3}, and Proposition \ref{pro3}, we explicitly determine the conditions under which $h(x)$ can be a permutation polynomial over $\mathbb{F}_{q^2}$.

If $\text{deg}(g)=2$, the degree of $\text{gcd}(N(x),D(x))$ is $1$, which shows $N(x)$ and $D(x)$ have only one common root $\alpha$, which is preserved by the function $x \mapsto x^{-q}$. Hence $\alpha\in \mu_{q+1}$, which implies that $f(x)$ has a root in $\mu_{q+1}$. In this case, $f(x)$ cannot be a permutation polynomial.

If $\text{deg}(g)=1$, the degree of $H(x)=\text{gcd}(N(x),D(x))$ is $2$. Note that $f(x)$ having no roots in $\mu_{q+1}$ is equivalent to $H(x)$ having no roots in $\mu_{q+1}$, since every root of $f(x)$ in $\mu_{q+1}$ must also be a root of $H(x)$. Hence, in this case, we should add the condition that $H(x)$ has no roots in $\mu_{q+1}$.

We summarize the above discussion as follows:

\begin{thm}\label{thm4}
    Let $q=p^m$, where $p$ is prime and $m$ is a positive integer. For $a,b,c,d\in \mathbb{F}_{q^2}$, the polynomial $h(x)=ax^{3q}+bx^{2q+1}+cx^{q+1}+dx^3$ is a permutation polynomial over $\mathbb{F}_{q^2}$ if and only if $\text{gcd}(3,q-1)=1$ and one of the following holds:

    (1) The degree of $g(x)=\frac{d^qx^3+c^qx^2+b^qx+a^q}{ax^3+bx^2+cx+d}$ is $3$, the characteristic of $\mathbb{F}_q$ is not $3$, and Theorem \ref{thm2} holds;

    (2) The rational function $g(x)=\frac{d^qx^3+c^qx^2+b^qx+a^q}{ax^3+bx^2+cx+d}$ is separable of degree $3$, the characteristic of $\mathbb{F}_q$ is $3$, and Theorem \ref{thm3} holds;

    (3) The rational function $g(x)=\frac{d^qx^3+c^qx^2+b^qx+a^q}{ax^3+bx^2+cx+d}$ is not separable, $d^{q+1}\neq a^{q+1}$;
    
    (4) The degree of $g(x)=\frac{d^qx^3+c^qx^2+b^qx+a^q}{ax^3+bx^2+cx+d}$ is $1$, and $H(x)=\text{gcd}(ax^3+bx^2+cx+d,d^qx^3+c^qx^2+b^qx+a^q)$ has no roots in $\mu_{q+1}$.
\end{thm}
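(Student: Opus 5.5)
The plan is to reduce Theorem \ref{thm4} to the rational-function results already proved. First write $h(x)=x^3f(x^{q-1})$ with $f(x)=ax^3+bx^2+cx+d$ (reading the third monomial as $cx^{q+2}$, as in the discussion preceding the statement). Apply Corollary \ref{cor1} with $s=q-1$, $d=q+1$ and $r=3$: $h$ permutes $\mathbb{F}_{q^2}$ if and only if $\gcd(3,q-1)=1$ and $x^3f(x)^{q-1}$ permutes $\mu_{q+1}$. Then Lemma \ref{lem5} with $A=f$ and $s=r=3$ gives the next reduction. Since $x^3f^{(q)}(1/x)=d^qx^3+c^qx^2+b^qx+a^q=N(x)$, this last condition is equivalent to two requirements: $f$ has no roots in $\mu_{q+1}$, and $g=N/D$ permutes $\mu_{q+1}$. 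So everything reduces to a case split on $\deg(g)$, using Theorem \ref{thm1}.

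For the degree cases I argue as follows.

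\textbf{Degree $0$.} Here $g$ is constant, so this case is impossible.

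\textbf{Degree $2$.} Here $\deg H=1$. By Lemma \ref{lem2}(3), $\alpha$ is a root of $D$ iff $\alpha^{-q}$ is a root of $N=\hat D$. Hence the root set of $H=\gcd(N,D)$ is stable under $\alpha\mapsto\alpha^{-q}$. A single root fixed by this map lies in $\mu_{q+1}$, so $f$ vanishes on $\mu_{q+1}$ and $h$ is not a permutation. This excludes degree $2$.

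\textbf{Degree $3$.} Any root $\alpha\in\mu_{q+1}$ of $D$ satisfies $\alpha^{-q}=\alpha$, so it would also be a root of $N$, contradicting $\deg H=0$. Thus the root condition on $f$ is automatic. The permutation criterion on $\mu_{q+1}$ then comes from two sources:
\begin{itemize}
\item Theorem \ref{thm2} when the characteristic is not $3$; there $g$ is automatically separable.
\item Theorem \ref{thm3} (with Proposition \ref{pro3}) when the characteristic is $3$ and $g$ is separable.
\end{itemize}
In characteristic $3$ with $g$ inseparable, $g\in\mathbb{F}_{q^2}(x^3)$, so $g=f_1(x^3)$ with $f_1=(d^qx+a^q)/(ax+d)$. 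Since $\gcd(3,q+1)=1$ here (because $3\mid q$), $x^3$ permutes $\mu_{q+1}$. By Lemma \ref{lem3}, $g$ permutes exactly when $a^{q+1}\ne d^{q+1}$, which gives item (3).

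\textbf{Degree $1$.} Here $\deg H=2$, and by Section 4.1 every case of Theorem \ref{thm1}(2) makes $g$ a permutation of $\mu_{q+1}$. It remains to handle the root condition. Any root of $f$ in $\mu_{q+1}$ is a common root of $D$ and $N$, hence a root of $H$. Conversely, roots of $H$ are roots of $f=D$. So the root condition becomes: $H$ has no roots in $\mu_{q+1}$. This gives item (4).

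The main obstacle is making sure no case is lost at the boundaries of Theorem \ref{thm1}. The two delicate points are the degree-$3$ inseparable case, where I must check that $\deg(g)=3$ and $f$ rootless on $\mu_{q+1}$ are consistent with $a^{q+1}\neq d^{q+1}$, and the claim that every degree-$1$ case permutes. For the latter I will rely on the case-by-case verification in Section 4.1 rather than redoing it.
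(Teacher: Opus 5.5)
Your proposal follows the paper's own argument. You write $h=x^3f(x^{q-1})$ and use Corollary~\ref{cor1} and Lemma~\ref{lem5} to reduce to ``$f$ has no roots in $\mu_{q+1}$ and $g$ permutes $\mu_{q+1}$''. You then split on $\deg(g)$, using that $\alpha\mapsto\alpha^{-q}$ permutes the common roots of $N$ and $D$: this rules out degree $2$ and makes the root condition automatic in degree $3$.

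The one real issue is the step ``which gives item (3)''. You treat (3) as the inseparable degree-$3$ case in characteristic $3$, which is what the paper means (Section 4.3, Case 2). But item (3) as written only says ``$g$ is not separable'', and in characteristic $2$ there are inseparable $g$ of degree $2$. For those, your own degree-$2$ argument shows $h$ is never a permutation. Concretely, take $q=2$, $a=b=1$, $c=d=0$:
\begin{itemize}
\item $g=(x+1)/(x^3+x^2)=x^{-2}\in\mathbb{F}_4(x^2)$, so $g$ is not separable;
\item $d^{q+1}=0\neq 1=a^{q+1}$ and $\gcd(3,q-1)=1$;
\item yet $h=x^6+x^5$ satisfies $h(0)=h(1)=0$.
\end{itemize}
So the ``if'' direction for (3) is false as literally stated. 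Your proof establishes the theorem only with (3) restricted to $p=3$ (equivalently $\deg(g)=3$), and you should say so explicitly.

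Two smaller remarks. First, in the degree-$1$ case $\deg H$ need not be $2$: for $a=d=0$ and $b^{q+1}\neq c^{q+1}$ one gets $H=x$. Your root-condition argument does not use $\deg H=2$, so this is harmless. Second, you need not rely on Section 4.1 or on the completeness of Theorem~\ref{thm1}(2). Once $f$ has no roots on $\mu_{q+1}$, Lemma~\ref{lem5} shows $g$ maps $\mu_{q+1}$ into itself, and a degree-$1$ rational function is injective on $\mathbb{P}^{1}(\bar{\mathbb{F}}_q)$, so it permutes $\mu_{q+1}$ automatically.
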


The case $ax^{2q-1}+bx^q+cx+dx^{q^2-q+1}$ can be discussed similarly. We only present the results.

\begin{thm}\label{thm5}
    Let $q=p^m$, where $p$ is prime and $m$ is a positive integer. For $a,b,c,d\in \mathbb{F}_{q^2}$, the polynomial $h(x)=ax^{2q-1}+bx^{q}+cx+dx^{q^2-q+1}$ is a permutation polynomial over $\mathbb{F}_{q^2}$ if and only if $\text{gcd}(3,q-1)=1$ and one of the following holds:

    (1) The degree of $g(x)=\frac{d^qx^3+c^qx^2+b^qx+a^q}{ax^3+bx^2+cx+d}$ is $3$, the characteristic of $\mathbb{F}_q$ is not $3$, and Theorem \ref{thm2} holds;

     (2) The rational function $g(x)=\frac{d^qx^3+c^qx^2+b^qx+a^q}{ax^3+bx^2+cx+d}$ is separable of degree $3$, the characteristic of $\mathbb{F}_q$ is $3$, and Theorem \ref{thm3} holds;

    (3) The rational function $g(x)=\frac{d^qx^3+c^qx^2+b^qx+a^q}{ax^3+bx^2+cx+d}$ is not separable, $d^{q+1}\neq a^{q+1}$;

    (4) The degree of $g(x)=\frac{d^qx^3+c^qx^2+b^qx+a^q}{ax^3+bx^2+cx+d}$ is $1$, and $H(x)=\text{gcd}(ax^3+bx^2+cx+d,d^qx^3+c^qx^2+b^qx+a^q)$ has no roots in $\mu_{q+1}$.
    
\end{thm}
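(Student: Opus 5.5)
The plan is to reduce Theorem \ref{thm5} to the already established Theorem \ref{thm4}, by writing $h(x)=ax^{2q-1}+bx^q+cx+dx^{q^2-q+1}$ in the shape $x^rf(x^{q-1})$ and reading off the same rational function $g$. For $x\in\mathbb{F}_{q^2}^*$ we have $x^{2q-1}=x\cdot x^{2(q-1)}$, $x^q=x\cdot x^{q-1}$, and $x^{q^2-q+1}=x\cdot x^{q(q-1)}$. Since $x^{q-1}\in\mu_{q+1}$, we get $x^{q(q-1)}=x^{-(q-1)}$. So on $\mathbb{F}_{q^2}$ the map $h$ agrees with $x\bigl(ax^{2(q-1)}+bx^{q-1}+c+dx^{-(q-1)}\bigr)=x^{1-(q-1)}\cdot(ax^3+bx^2+cx+d)\circ x^{q-1}$. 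Because $x^{-(q-1)}=x^{q(q-1)}$, the exponent $r=1-(q-1)$ may be replaced by $r=1+q(q-1)=q^2-q+1$. This gives $h(x)=x^{q^2-q+1}f(x^{q-1})$ with $f(x)=ax^3+bx^2+cx+d$ as functions on $\mathbb{F}_{q^2}$; both sides vanish at $0$.

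Next I would apply Corollary \ref{cor1} with $s=q-1$ and $d=q+1$. Then $h$ permutes $\mathbb{F}_{q^2}$ if and only if $\gcd(q^2-q+1,q-1)=\gcd(1,q-1)=1$ and $x^{q^2-q+1}f(x)^{q-1}$ permutes $\mu_{q+1}$. Here $\gcd(r,s)=1$ holds automatically. Since $q^2-q+1\equiv 3\pmod{q+1}$, Lemma \ref{lem5} with $s=3$ shows that this is equivalent to two conditions: $f$ has no roots in $\mu_{q+1}$, and $x^3f^{(q)}(1/x)/f(x)=g(x)$ permutes $\mu_{q+1}$. This is exactly the criterion obtained for $x^3f(x^{q-1})$ in the discussion preceding Theorem \ref{thm4}. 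The condition $\gcd(3,q-1)=1$ in the statement therefore has to come out of the degree-3 analysis through Theorems \ref{thm2} and \ref{thm3}, rather than from Corollary \ref{cor1}. I would handle this exactly as Theorem \ref{thm4} does, by copying its stated hypothesis.

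From here I would repeat the case analysis of Theorem \ref{thm4} on $\deg(g)$, using Theorem \ref{thm1} to identify the cases.

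When $\deg(g)=0$, $g$ is constant on $\mu_{q+1}$, so $h$ is not a permutation.

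When $\deg(g)=2$, the unique common root of $N$ and $D$ is fixed by $x\mapsto x^{-q}$ (Lemma \ref{lem2}(3)). It therefore lies in $\mu_{q+1}$ and is a root of $f$, so $h$ is not a permutation.

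When $\deg(g)=3$, $N$ and $D$ are coprime. Any root of $f$ in $\mu_{q+1}$ would be a common root of $N$ and $D$, so this cannot occur. Hence $h$ is a permutation exactly when $g$ permutes $\mu_{q+1}$:
\begin{itemize}
\item if the characteristic is not $3$, this is Theorem \ref{thm2};
\item if the characteristic is $3$ and $g$ is separable, this is Theorem \ref{thm3};
\item if $g$ is inseparable, then $g=\frac{d^qx+a^q}{ax+d}\circ x^3$, and by Lemma \ref{lem3} this permutes $\mu_{q+1}$ exactly when $d^{q+1}\neq a^{q+1}$.
\end{itemize}

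When $\deg(g)=1$, the gcd $H$ has degree $2$. The condition "$f$ has no roots in $\mu_{q+1}$" becomes "$H$ has no roots in $\mu_{q+1}$", since any such root of $f$ is also a root of $N$. The permutation property of the degree-1 function $g$ is supplied by the case analysis of Section 4.1.

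The main obstacle is the first step: checking that the exponent bookkeeping really gives $r\equiv 3\pmod{q+1}$. This matters because otherwise the rational function attached to $h$ would not be the same $g$, and Theorem \ref{thm4} could not be reused. Once that congruence is verified, the rest is a verbatim transfer of the proof of Theorem \ref{thm4}.
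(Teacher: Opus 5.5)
Your reduction is sound and is the route the paper intends; the paper only says Theorem~\ref{thm5} is ``discussed similarly'' to Theorem~\ref{thm4}. On $\mathbb{F}_{q^2}$ one has $h(x)=x^{q^2-q+1}f(x^{q-1})$ with $q^2-q+1\equiv 3 \pmod{q+1}$. Corollary~\ref{cor1} and Lemma~\ref{lem5} then show that $h$ permutes $\mathbb{F}_{q^2}$ if and only if $f$ has no roots in $\mu_{q+1}$ and $g$ permutes $\mu_{q+1}$.

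The gap is the clause $\gcd(3,q-1)=1$. You correctly find $\gcd(q^2-q+1,q-1)=1$, so Corollary~\ref{cor1} imposes nothing here. You then assert that the clause ``has to come out of'' Theorems~\ref{thm2} and~\ref{thm3}, and propose to copy it from Theorem~\ref{thm4}. That is not an argument, and no argument can exist. In Theorem~\ref{thm4} the clause is exactly the condition $\gcd(r,s)=1$ of Corollary~\ref{cor1} for $r=3$, and it does not survive the change to $r=q^2-q+1$. The degree-$1$ analysis involves no condition on $q$ modulo $3$. Theorem~\ref{thm2} also explicitly permits $3\mid q-1$ through its alternative $\gcd(3,q+1)=1$.

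Concretely, take $q=4$, $a=b=d=0$, $c=1$. Then $h(x)=x$, which permutes $\mathbb{F}_{16}$. Here $g(x)=x^2/x=x$ has degree $1$ and $H(x)=x$ has no roots in $\mu_5$, so (4) holds, yet $\gcd(3,q-1)=3$. Likewise $a=b=c=0$, $d=1$ gives $h(x)=x^{13}$, a permutation of $\mathbb{F}_{16}$ since $\gcd(13,15)=1$. So the ``only if'' direction of the statement as written is false, and your proof breaks exactly at this step. What your reduction and case analysis actually establish is the statement with the clause $\gcd(3,q-1)=1$ deleted.

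There is a second mismatch you passed over. You treat (3) only inside the case $\deg(g)=3$, but (3) as written has no degree restriction. In characteristic $2$ a degree-$2$ $g$ can be inseparable with $d^{q+1}\neq a^{q+1}$. Take $a=b=0$, $c=d=1$ and $q=2^m$ with $m$ odd. Then $g=(x^3+x^2)/(x+1)=x^2$, $d^{q+1}\neq a^{q+1}$, and $\gcd(3,q-1)=1$. Yet $h(x)=x+x^{q^2-q+1}$ vanishes on all of $\mathbb{F}_q$; for $q=2$ it sends both $0$ and $1$ to $0$ in $\mathbb{F}_4$. This agrees with your own degree-$2$ argument, since $f=x+1$ vanishes at $1\in\mu_{q+1}$. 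So for the ``if'' direction you must state explicitly that (3) means the degree-$3$, characteristic-$3$ case $g=\frac{d^qx+a^q}{ax+d}\circ x^3$; otherwise that direction fails as well.
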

\section{Conclusion}

In this paper, by using the classification results of low-degree permutation rational functions, we explicitly determine all  rational functions of the form $\frac{d^qx^3+c^qx^2+b^qx+a^q}{ax^3+bx^2+cx+d}$, where $a,b,c,d\in \mathbb{F}_{q^2}$, which induce a permutation on $\mu_{q+1}$. Using these results, we can explicitly determine many permutation polynomials which are induced from such functions. It would be interesting to explicitly determine more permutation rational functions over $\mu_{q+1}$ in order to construct additional permutation polynomials over $\mathbb{F}_{q^2}$.

\section{Acknowledgement}
The authors thank the editor and anonymous referees for their efforts in improving the readability of this paper.

\bibliographystyle{IEEEtran}
\bibliography{ffa-refs}  









\end{document}